\theoremstyle:=definition,remark,plain\do{%
        \expandafter\g@addto@macro\csname th@\theoremstyle\endcsname{%
            \addtolength\thm@preskip\parskip
            }%
        }
\newcounter{theorem}\numberwithin{theorem}{section}
\newcounter{proofcount} 
\newtheorem{claim}{Claim}
\newtheorem*{nclaim}{Claim}
\theoremstyle{remark}
\newtheorem*{cproof/}{Proof of claim \rev@cproofmark}
\newenvironment{cproof}[1][\@nil]
  {\def\@tmp{#1}%
   \ifx\@tmp\@nnil
       \def\rev@cproofmark{\theclaim}%
    \else
       \let\rev@cproofmark\@tmp
    \fi
   \pushQED{\qed}\begin{cproof/}}
  {\popQED\end{cproof/}}
\newtheorem*{ncproof/}{Proof of claim}
\newenvironment{ncproof}
  {%
   \pushQED{\qed}\begin{ncproof/}}
  {\popQED\end{ncproof/}}
\theoremstyle{plain}
\newtheorem{thm}[theorem]{Theorem}
\newtheorem{cor}[theorem]{Corollary}
\newtheorem{lem}[theorem]{Lemma}
\newtheorem*{thm*}{Theorem}
\theoremstyle{definition}
\newtheorem{defi/}[theorem]{Definition}
\newtheorem{obs/}[theorem]{Remark}
\newtheorem{ej/}[theorem]{Example}
\newenvironment{defi}
  {%
   \pushQED{\qed}\begin{defi/}}
  {\popQED\end{defi/}}
\newcommand{\thistheoremname}{}
\newtheorem{genericthm}[theorem]{\thistheoremname}
\newcommand{\restr}{\mathord{\upharpoonright}}
\newcommand{\seq}[1]{{\langle{#1}\rangle}}
\newcommand{\darrow}{\mathord{\downarrow}}
\newcommand{\uarrow}{\mathord{\uparrow}}
\renewcommand{\subset}{\subseteq}
\renewcommand{\supset}{\supseteq}
\renewcommand{\phi}{\varphi}
\renewcommand{\diamond}{\diamondsuit}
\newcommand*{\myarrow}{\mathrel{\rightarrowtail\kern-1.9ex\twoheadrightarrow}}
\newcommand\R{\mathbb{R}}
\newcommand\T{\tau}
\newcommand\Q{\mathbb{Q}}
\newcommand\dom{\mathrm{dom}}
\newcommand\ran{\mathrm{ran}}
\newcommand\w{\omega}
\newcommand\cf{\mathrm{cf}}
\newcommand\U{\mathcal{U}}
\newcommand\B{\mathcal{B}}
\newcommand\h{\mathrm{ht}}
\newcommand\D{\mathbb{D}}
\newcommand\I{\mathcal{I}}
\newcommand\J{\mathcal{J}}
\newcommand\V{\mathcal{V}}
\title{Square compactness and Lindelöf trees}
\author{Pedro E. Marun}
\address{DEPARTMENT OF MATHEMATICAL SCIENCES,
CARNEGIE MELLON UNIVERSITY,
PITTSBURGH, PA, 15213}
\email{pmarun@andrew.cmu.edu}
\subjclass[2020]{Primary 03E05;
	Secondary 54B10,03E04}
\date{}
\thanks{The results of this paper will form a part of the author’s PhD thesis written under the supervision of James Cummings, to whom the author would like to express his gratitude.}
\begin{document}

\begin{abstract}
We prove that every weakly square compact cardinal is a strong limit cardinal. We also study Aronszajn trees with no uncountable finitely splitting subtrees, characterizing them in terms of being Lindelöf with respect to a particular topology. We prove that the class of such trees is non-empty and lies strictly between the classes of Suslin and Aronszajn trees.
\end{abstract}

\maketitle

\section{Introduction}

Recall that a topological space is \emph{Lindelöf} if and only if every open cover has a countable subcover. Unlike compactness, the Lindelöf property need not be presserved by finite products, as shown by the classical Sorgenfrey line example, which is the space $X$ with underlying set $\R$ and topology generated by all left-closed right-open intervals. This space is Lindelöf, but the uncountable set $\{(x,-x):x\in\R\}$ is closed and discrete in $X^2$, hence $X^2$ is not Lindelöf. For details, see \cite[Countereexample 84]{seebachCounterexamplesTopology1978}.

Extending this to larger cardinals $\kappa$, we say that a topological space $X$ is \emph{$\kappa$-compact} if and only if every open cover of $X$ has a subcover of size less than $\kappa$. So, compact is $\aleph_0$-compact and Lindelöf is $\aleph_1$-compact. In connection with this, Hajnal and Juhász \cite{hajnalSquarecompactCardinals1973} introduced the following large cardinal notion: an infinite cardinal $\kappa$ is \emph{square compact} if and only if for every $\kappa$-compact space $X$, $X^2$ is $\kappa$-compact. This is in fact equivalent to the product of any two $\kappa$-compact spaces being $\kappa$-compact. For the non-trivial direction, if $X$ and $Y$ are $\kappa$-compact, then so is their disjoint sum $X\oplus Y$. By assumption, $(X\oplus Y)^2$ is $\kappa$-compact. Since $X\times Y$ is a closed subset of $(X\oplus Y)^2$, it follows that $X\times Y$ is $\kappa$-compact too.

Recall that the \emph{weight} $w(X)$ of a topological space $X$ is the least size of a base for the topology on $X$. A refined version of square compactness, graduated by weights, was introduced by Buhagiar and D\u{z}amonja in their 2021 paper \cite{buhagiarSquareCompactnessFilter2021}: given some cardinal $\lambda$, an infinite cardinal $\kappa$ is \emph{$\lambda$-square compact} if and only if for every space $X$ of weight $\le\lambda$, if $X$ is $\kappa$-compact, then $X^2$ is $\kappa$-compact
They (and we) say that $\kappa$ is \emph{weakly square compact} if and only if it is $\kappa$-square compact.

In this terminology, the results in \cite{hajnalSquarecompactCardinals1973} can be stated as:

\begin{thm*}[Hajnal-Juhász {\cite[Theorem 1]{hajnalSquarecompactCardinals1973}}]
Every weakly square compact cardinal is regular.
\end{thm*}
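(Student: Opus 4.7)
The plan is to prove the contrapositive: a singular cardinal is not weakly square compact. Set $\lambda = \cf\kappa < \kappa$ and fix an increasing cofinal sequence $\langle \kappa_\alpha : \alpha < \lambda \rangle$ in $\kappa$; by passing to a tail I may assume $\kappa_0 \ge \lambda$. I will construct a single space $X$ of weight $\le \kappa$ which is $\kappa$-compact but whose square $X^2$ is not $\kappa$-compact.

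For $X$, I take a "one-point $<\lambda$-compactification" of discrete $\kappa$: let $X = \kappa \cup \{\infty\}$, with every $\gamma < \kappa$ isolated and basic neighborhoods of $\infty$ given by $U_\alpha = \{\infty\} \cup (\kappa \setminus \kappa_\alpha)$ for $\alpha < \lambda$. The singletons $\{\gamma\}$ together with the $\lambda$-many $U_\alpha$'s form a base, so $w(X) \le \kappa$. For $\kappa$-compactness, any open cover must contain some $U_\alpha$ to cover $\infty$; its complement $\kappa_\alpha$ has size $< \kappa$, hence can be covered by $<\kappa$ singletons.

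The heart of the argument is to show $X^2$ fails $\kappa$-compactness, and the strategy is to exhibit a closed discrete $D \subset X^2$ with $|D| = \kappa$: then $\{X^2 \setminus D\} \cup \{\{d\} : d \in D\}$ is an open cover admitting no subcover of size $<\kappa$. I propose $D = \{(\gamma, f(\gamma)) : \gamma < \kappa\}$, where $f : \kappa \to \lambda$ is the \emph{interval index} $f(\gamma) = \min\{\alpha < \lambda : \gamma < \kappa_\alpha\}$. This $D$ has size $\kappa$ and is trivially discrete, since each of its points is isolated in $X^2$.

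The main obstacle is verifying that $D$ is closed, and this is where the arrangement $\kappa_0 \ge \lambda$ is used. Since the range of $f$ lies in $\lambda \le \kappa_0$, no point of $D$ has second coordinate $\ge \kappa_0$; this observation handles two of the three types of non-isolated points of $X^2$: the neighborhood $U_0 \times U_0$ of $(\infty, \infty)$ and the neighborhood $\{\gamma\} \times U_0$ of $(\gamma, \infty)$ are both disjoint from $D$. For the remaining points $(\infty, \delta)$ with $\delta \in \kappa$, if $\delta \ge \lambda$ then no point of $D$ has second coordinate $\delta$ at all, while if $\delta < \lambda$ then the fibre $f^{-1}(\delta)$ is contained in $\kappa_\delta$, so the neighborhood $U_{\delta+1} \times \{\delta\}$ witnesses $(\infty, \delta) \notin \bar D$. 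This closes the verification and completes the argument.
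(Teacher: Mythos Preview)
The paper does not actually prove this theorem; it is quoted in the introduction as a result of Hajnal and Juh\'asz with a citation to \cite[Theorem 1]{hajnalSquarecompactCardinals1973}, and later (in the proof of Corollary~\ref{14}) the paper simply invokes that citation again for the regularity of $\kappa$. So there is no in-paper argument to compare against.

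Evaluated on its own merits, your argument is correct. The space $X$ is Hausdorff, has weight exactly $\kappa$, and is $\kappa$-compact for the reason you give. The verification that $D$ is closed is complete: the key observations are that $\ran(f)\subset\lambda\le\kappa_0$, which disposes of $(\infty,\infty)$ and every $(\gamma,\infty)$ via $U_0$ in the second coordinate, and that each fibre $f^{-1}(\delta)$ is bounded by $\kappa_\delta$, so $U_{\delta}\times\{\delta\}$ (or your $U_{\delta+1}\times\{\delta\}$; either works since $\lambda$ is a limit ordinal) separates $(\infty,\delta)$ from $D$. A closed discrete set of size $\kappa$ then witnesses the failure of $\kappa$-compactness of $X^2$ exactly as you describe.

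One cosmetic remark: you could replace $U_{\delta+1}$ by $U_\delta$ in the last case, since $f(\gamma)=\delta$ already gives $\gamma<\kappa_\delta$; this avoids even mentioning successor indices.
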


\begin{thm*}[Hajnal-Juhász {\cite[Theorem 2]{hajnalSquarecompactCardinals1973}}]
Suppose that $\kappa$ is uncountable. If $\kappa$ is $2^\kappa$-square compact, then it is weakly compact.
\end{thm*}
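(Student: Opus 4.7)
The plan begins with a trivial reduction: since $\kappa \le 2^\kappa$, the hypothesis of $2^\kappa$-square compactness entails weak square compactness, and the first Hajnal-Juhász theorem above delivers that $\kappa$ is regular. For uncountable regular cardinals, weak compactness is equivalent to the partition relation $\kappa \to (\kappa)^2_2$, so the remaining task is to establish this partition property.

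I would argue by contrapositive. Fix a coloring $c \colon [\kappa]^2 \to 2$ with no $c$-homogeneous subset of cardinality $\kappa$, and build a topological space $X$ of weight at most $2^\kappa$ that is $\kappa$-compact but whose square is not, contradicting the hypothesis. The candidate template is $X = \kappa \cup \{\ast_0, \ast_1\}$, where each ordinal is isolated and each $\ast_i$ carries a neighborhood base $\{\{\ast_i\} \cup A : A \in \mathcal{F}_i\}$ for a $\kappa$-complete filter $\mathcal{F}_i$ on $\kappa$ built from $c$. Arranging that every $F_0 \in \mathcal{F}_0$ and $F_1 \in \mathcal{F}_1$ satisfy $|\kappa \setminus (F_0 \cup F_1)| < \kappa$ gives $\kappa$-compactness of $X$: the two cover members containing $\ast_0, \ast_1$ together leave fewer than $\kappa$ isolated points to pick off individually. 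Since $|X| = \kappa$, the weight bound of $2^\kappa$ is automatic.

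To spoil $\kappa$-compactness of $X^2$, I would exhibit a closed discrete subset of size $\kappa$, namely a ``$c$-antidiagonal'' like $D = \{(\alpha, \beta) : \alpha < \beta,\ c(\{\alpha, \beta\}) = 0\}$. Discreteness is immediate. Closedness requires that each of $(\ast_0, \ast_0)$, $(\ast_0, \ast_1)$, $(\ast_1, \ast_0)$, $(\ast_1, \ast_1)$, and the mixed points $(\ast_i, \alpha), (\alpha, \ast_i)$ admit a basic neighborhood missing $D$; this is the step where the no-homogeneous-set hypothesis on $c$ must be spent, by tuning the $\mathcal{F}_i$ to concentrate transversally to $c$, roughly so that $\mathcal{F}_0$ sees ``1-colored starting coordinates'' and $\mathcal{F}_1$ sees ``1-colored ending coordinates''.

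The main obstacle I anticipate is precisely this balancing act in designing $\mathcal{F}_0$ and $\mathcal{F}_1$: they must simultaneously be $\kappa$-complete (for $X$ to be $\kappa$-compact), satisfy the joint covering condition above, and be sensitive enough to $c$ to witness closedness of $D$. Ensuring these three demands can coexist is exactly where the combinatorial hypothesis on $c$ has to do the real work, and it is the place where a naive attempt will fail. Should the filter approach prove stubborn, my backup is to route through the tree-property characterization of weak compactness: failure of $\kappa \to (\kappa)^2_2$ over regular uncountable $\kappa$ yields a $\kappa$-Aronszajn-type obstruction, on which a tree topology (ordinals isolated, convergence along branches) should give an analogous $\kappa$-compact space whose square fails $\kappa$-compactness precisely for lack of a cofinal branch.
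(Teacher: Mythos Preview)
The paper does not supply its own proof of this theorem; it is quoted from Hajnal--Juh\'asz as background and invoked only by citation (see the introduction and the proof of Theorem~\ref{15}). There is therefore no in-paper argument to compare your proposal against.

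On its own terms, what you have written is a plan rather than a proof, and you are candid about where the gap lies: the filters $\mathcal{F}_0,\mathcal{F}_1$ are left entirely unspecified. This is not a detail to be filled in later---it is the whole argument, and as sketched it runs into a genuine obstruction. For your antidiagonal $D$ to be closed at $(\ast_0,\ast_1)$ you need sets $F_0\in\mathcal{F}_0$ and $F_1\in\mathcal{F}_1$ such that no pair $(\alpha,\beta)\in F_0\times F_1$ with $\alpha<\beta$ receives colour $0$. But your covering condition forces both filters to extend the co-bounded filter, so $F_0$ and $F_1$ each have size $\kappa$; you are then asking for a $\kappa\times\kappa$ rectangle whose upper triangle is $1$-homogeneous. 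A colouring witnessing $\kappa\not\to(\kappa)^2_2$ need not admit anything of the sort (Sierpi\'nski-style colourings from two interleaved linear orders are the standard obstruction). With only two non-isolated points, a single basic neighbourhood of $(\ast_0,\ast_1)$ already demands a large homogeneous-like configuration, which is exactly what the hypothesis denies you. The template is too coarse.

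Your backup via the tree property is closer to how such results are actually proved, but note that passing from ``$\kappa$ regular and $\kappa\not\to(\kappa)^2_2$'' to a $\kappa$-Aronszajn tree requires $\kappa$ to be strongly inaccessible, which you have not established. The present paper devotes Corollary~\ref{14} to precisely that step (via generalized Sorgenfrey lines), and only then does the Buhagiar--D\v{z}amonja argument, combined with the forward direction from Hajnal--Juh\'asz, close the circle in Theorem~\ref{15}. A complete argument along your backup route would have to follow the same two-stage shape: first inaccessibility, then topologize a $\kappa$-Aronszajn tree.
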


This shows that the existence of $\kappa$ which is $2^\kappa$-square compact is already a large cardinal notion. In \cite{buhagiarSquareCompactnessFilter2021}, Buhagiar and D\u{z}amonja undertake a closer study of weak square compactness, and give a variety of equivalent formulations. In particular, they proved the following:

\begin{thm*}[Buhagiar-D\u{z}amonja, {\cite[Theorem 5.1]{buhagiarSquareCompactnessFilter2021}}]
Let $\kappa$ be an uncountable cardinal. Suppose that $\kappa^{<\kappa}=\kappa$. Then $\kappa$ is weakly compact if and only if it is weakly square compact.
\end{thm*}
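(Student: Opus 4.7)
The plan is to deploy the filter-extension characterization of weak compactness: for a regular uncountable cardinal $\kappa$, $\kappa$ is weakly compact if and only if every $\kappa$-complete filter on a $\kappa$-complete Boolean algebra of size at most $\kappa$ extends to a $\kappa$-complete ultrafilter. The hypothesis $\kappa^{<\kappa}=\kappa$ enters precisely to keep the $\kappa$-complete Boolean algebras generated by bases of the spaces in question within that cardinality bound.

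For the direction ``weakly compact $\Rightarrow$ weakly square compact'', suppose $X,Y$ are $\kappa$-compact with bases $\mathcal{B}_X,\mathcal{B}_Y$ of size $\le\kappa$. Aiming at a contradiction, I would reduce a hypothetical bad open cover of $X\times Y$ to a basic-open cover $\{U_\alpha\times V_\alpha:\alpha<\kappa\}$ admitting no subcover of size $<\kappa$, so that each $F_\gamma:=(X\times Y)\setminus\bigcup_{\alpha<\gamma}(U_\alpha\times V_\alpha)$ is non-empty. Let $\mathcal{A}\subseteq\mathcal{P}(X\times Y)$ be the $\kappa$-complete Boolean algebra generated by $\{U\times Y:U\in\mathcal{B}_X\}\cup\{X\times V:V\in\mathcal{B}_Y\}$; then $\kappa^{<\kappa}=\kappa$ gives $|\mathcal{A}|\le\kappa$, and each $F_\gamma$ lies in $\mathcal{A}$. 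Using regularity of $\kappa$, the family $\{F_\gamma\}_{\gamma<\kappa}$ is a $<\kappa$-directed filter base, so by weak compactness it extends to a $\kappa$-complete ultrafilter $\mathcal{U}$ on $\mathcal{A}$.

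Setting $\mathcal{U}_X:=\{A\subseteq X:A\times Y\in\mathcal{U}\}$ yields a $\kappa$-complete ultrafilter on the corresponding algebra on $X$. The collection $\{X\setminus U:U\in\mathcal{B}_X,\ U\notin\mathcal{U}_X\}$ then consists of closed sets whose every $<\kappa$-subfamily has intersection lying in $\mathcal{U}_X$ (hence non-empty), so the $\kappa$-compactness of $X$ produces a point $x\in X$ for which every basic open neighborhood of $x$ belongs to $\mathcal{U}_X$. Choose $y\in Y$ analogously. If $(x,y)\in U_\alpha\times V_\alpha$ for some $\alpha$, then $U_\alpha\times Y$ and $X\times V_\alpha$, and hence their intersection $U_\alpha\times V_\alpha$, all lie in $\mathcal{U}$, contradicting that $F_{\alpha+1}\in\mathcal{U}$ is disjoint from $U_\alpha\times V_\alpha$. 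Therefore $(x,y)$ is uncovered, the desired contradiction.

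For the converse, $\kappa$ is regular by the Hajnal--Juhász theorem already cited, and $\kappa^{<\kappa}=\kappa$ makes $\kappa$ a limit with $2^{<\kappa}\le\kappa$; it remains to show strong inaccessibility and the tree property. Arguing contrapositively, if $\kappa$ fails to be weakly compact then either $2^\lambda=\kappa$ for some $\lambda<\kappa$ or there is a $\kappa$-Aronszajn tree, and in either case I would construct a $\kappa$-compact space of weight $\le\kappa$ whose square fails to be $\kappa$-compact. For a $\kappa$-Aronszajn tree $T$ the natural candidate is a space of (bounded) branches equipped with an interval-type topology: the absence of cofinal branches should yield $\kappa$-compactness of the factor, while the square should support a $<\kappa$-directed family of closed sets whose empty intersection would otherwise encode a cofinal branch. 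The principal obstacle is precisely this backward construction---topologizing the combinatorial witness so that a single factor remains $\kappa$-compact while the square certifiably is not---since the verification is delicate and highly sensitive to the choice of witness.
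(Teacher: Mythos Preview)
The paper does not actually prove this theorem: it is quoted in the introduction as a result of Buhagiar and D\u{z}amonja and later invoked as a black box in the proof of Theorem~\ref{15}. There is thus no proof here to compare your proposal against directly.

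On the merits: your forward direction (weakly compact $\Rightarrow$ weakly square compact) is essentially correct and is the weight-$\kappa$ analogue of the Alexander-subbase argument the paper does spell out for strongly compact cardinals in Lemma~\ref{Alexander} and Corollary~\ref{strong}. One extends a $\kappa$-complete filter on a $\kappa$-complete field of size $\le\kappa$ to a $\kappa$-complete ultrafilter, projects to each factor, and uses $\kappa$-compactness of the factors to locate an uncovered point. (Minor slip: the assertion that ``$\kappa^{<\kappa}=\kappa$ makes $\kappa$ a limit'' is false---under $\sf CH$ one has $\aleph_1^{<\aleph_1}=\aleph_1$---though your subsequent case split ``either $2^\lambda=\kappa$ for some $\lambda<\kappa$ or there is a $\kappa$-Aronszajn tree'' is nonetheless valid once regularity is in hand.)

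Your backward direction, however, is not a proof but a statement of intent, and you yourself identify the gap: from a $\kappa$-Aronszajn tree you must produce a concrete $\kappa$-compact space of weight $\le\kappa$ whose square fails to be $\kappa$-compact, and you do not. This construction is precisely the content of the Buhagiar--D\u{z}amonja argument and does not fall out of generalities. Likewise, for the case $2^\lambda=\kappa$ you offer nothing; the present paper handles exactly that case (and more) via the generalized Sorgenfrey construction of Lemmas~\ref{9}--\ref{12} and Corollary~\ref{14}, which is how the hypothesis $\kappa^{<\kappa}=\kappa$ is ultimately eliminated in Theorem~\ref{15}. As it stands, your backward direction is a plan, not a proof.
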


Note that if $\kappa$ is weakly compact, then $\kappa^{<\kappa}=\kappa$. The same is not obviously true if instead $\kappa$ is $\kappa$-square compact. The first result we establish in this paper is that indeed $\kappa=\kappa^{<\kappa}$ whenever $\kappa$ is $\kappa$-square compact, thereby removing the cardinal arithmetic assumption from the Buhagiar-D\u{z}amonja theorem. This is done by generalizing the Sorgenfrey line construction.

As far as we know, strong compactness continues to be the best upper bound for the consistency strength of full square compactness. This is part of the folklore of the subject, and for completeness we include a proof (see Theorem \ref{strong}). In their \cite[Theorem 2.10]{buhagiarSquareCompactnessFilter2021}, Buhagiar and Dzamonja give the following characterization: a cardinal $\kappa$ is strongly compact if and only if every $\kappa$-box product of $\kappa$-compact spaces is $\kappa$-compact. Recall that the \emph{$\kappa$-box product topology} on $\prod_i X_i$ is generated by all sets of the form $\prod_i U_i$ with $U_i$ open in $X_i$ and $|\{i: U_i\neq X_i\}|<\kappa$. Of course, \cite[Theorem 2.10]{buhagiarSquareCompactnessFilter2021} immediately yields the aforementioned folklore result.

Sharper upper bounds for the consistency strength of ``There exists $\kappa$ which is $2^\kappa$-square compact" appear in \cite[Theorem 6.1]{buhagiarSquareCompactnessFilter2021}. 

Having studied topologies on linear orders, we turn to looking at topologies on trees, with a view towards introducing new examples of Lindelöf spaces. In the survey \cite{nyikosVariousTopologiesTrees1997}, Nyikos considers a total of ten different topologies on trees. Of these, only two are always Hausdorff, and we adhere to the doctrine of only considering Hausdorff spaces. By \cite[Theorem 3.6]{nyikosVariousTopologiesTrees1997}, the \emph{coarse wedge topology} appears uninteresting for our purposes, since it is $\w_1$-compact\footnote{We caution the reader that, in Nyikos' survey, $X$ being $\w_1$-compact means that every closed and discrete subset of $X$ is countable.} if and only if the underlying tree has countably many minimal elements. This leaves us with only the \emph{fine wedge topology} to focus on. We give a tree-theoretic characterization of being Lindelöf with respect to this topology. First, some terminology: we say that a tree is \emph{finitely splitting} if and only if every point in the tree has finitely many immediate successors. A \emph{subtree} of a tree $T$ is a set $S\subset T$ such that for all $x\in S$ and $y\in T$, if $y<x$, then $y\in S$.

\begin{thm*}
Let $T$ be an $\aleph_1$-tree. Then $T$ is Lindelöf with respect to the fine-wedge topology if and only if every finitely splitting subtree of $T$ is countable. 
\end{thm*}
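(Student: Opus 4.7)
The plan is to prove both implications by contraposition.

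For $(\Rightarrow)$: Suppose $S\subseteq T$ is an uncountable finitely splitting subtree. Since the levels of $T$ are countable, $|S|=\aleph_1$ forces $S$ to have height $\omega_1$; and because $S$ is downward-closed, for every $x\in S$ the chain $T_{<x}$ lies entirely in $S$. The goal is to construct an open cover of $T$ with no countable subcover. For each $x\in S$ of limit height, pick some $y(x)\in S$ strictly below $x$ and take the basic wedge $V_{y(x)}(x)=(y(x),x]\cup C_x$; for each $x\in S$ of non-limit height, take the cone $C_x=V_{x^-}(x)$; and add enough basic opens to cover $T\setminus S$. Any countable subcover $\{U_{x_n}\}$, by pigeonhole applied to the uncountably many $S$-elements, puts uncountably many of them in some $C_{x_n}$, whence $C_{x_n}\cap S$ is uncountable and still finitely splitting. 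Iterating produces an ascending chain in $S$, and a clever choice of the $y(x)$'s (perhaps by a pressing-down/Fodor-style argument) should ensure the chain cannot be extended at some limit level, producing the desired contradiction.

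For $(\Leftarrow)$: Fix an open cover $\mathcal{U}$ of $T$, WLOG by basic opens. Set $B=\{x\in T : \text{no single } V\in\mathcal{U} \text{ satisfies } C_x\subseteq V\}$. A routine check shows that $B$ is downward-closed, hence a subtree. If $B$ is countable, a countable subcover is easily produced: cover each $x\in B$ by some $U_x\in\mathcal{U}$ with $x\in U_x$, and cover each of the countably many minimal elements of $T\setminus B$ (being either minimal in $T$ or immediate $T$-successors of $B$-elements) by the single $V\in\mathcal{U}$ witnessing its membership in $T\setminus B$.

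To show $B$ is countable, the plan is to prune $B$ to an uncountable finitely splitting subtree $B'$, which would contradict the hypothesis. Let $B^\dagger=\{x\in B : \sup\{\h(y) : y\in B,\, y\ge x\}=\omega_1\}$, the set of $B$-elements whose $B$-cone has unbounded height. Then $B^\dagger$ is downward-closed, and each $x\in B^\dagger$ has at least one immediate successor in $B^\dagger$. Build $B'\subseteq B^\dagger$ by transfinite induction on levels, retaining at each node only finitely many immediate $B^\dagger$-successors while ensuring $B'$ itself has height $\omega_1$.

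The principal obstacle is the limit-level step of this pruning: a cofinal chain $(x_\alpha)_{\alpha<\lambda}$ in $B'$ must admit a common extension in $B^\dagger$. The pitfall is that the common extension $x^*\in T$ of such a chain may lie in $T\setminus B$, because some basic open $V_s(x^*)\in\mathcal{U}$ centered at $x^*$ itself could cover $C_{x^*}$ even though no basic open ever covered any individual $C_{x_\alpha}$. Navigating this potential ``collapse'' — perhaps by choosing common extensions judiciously from among the countably many candidates at each limit level of $T$, or by further restricting $B^\dagger$ to elements robust under such limits — is the subtle heart of the argument.
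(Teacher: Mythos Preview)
Both directions of your sketch have genuine gaps, and the same missing idea would fix them.

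For $(\Leftarrow)$, you correctly identify the limit-level obstacle in pruning $B$ to a finitely splitting subtree, but you do not resolve it, and in general it cannot be resolved as stated: a chain in $B^\dagger$ can perfectly well have all its upper bounds in $T\setminus B$. The fix is to first reduce the cover to a \emph{canonical} form. By the local-base description of the fine-wedge topology, it suffices to check Lindel\"ofness against covers indexed by a single function $f\in\prod_{x\in T}[I(x)]^{<\omega}$, where the open set assigned to $x$ is $\uarrow x\setminus\uarrow f(x)$. For such a cover, your set $B$ becomes (up to the harmless condition $f(x)\neq\emptyset$) exactly the set of \emph{safe} points: those $x$ such that for every $y<x$ the immediate successor of $y$ below $x$ lies in $f(y)$. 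The crucial observation is that this set is \emph{already} finitely splitting, since $I_B(x)\subseteq f(x)$; no pruning is needed and the limit-level problem evaporates. If the cover has no countable subcover, safe points exist at unboundedly many levels, so $B$ is the desired uncountable finitely splitting subtree.

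For $(\Rightarrow)$, your description is not a proof: you propose a family of basic opens, invoke pigeonhole to land uncountably many points in a single cone, and then gesture at ``iterating'' and a ``Fodor-style'' choice of the $y(x)$, without saying what the iteration produces or how Fodor would be applied. Once you have the canonical covers above, the construction is immediate: given an uncountable finitely splitting subtree $S$, set $f(x)=I_T(x)\cap S$. Every point of $S$ is then safe for $f$ (its entire downward chain lies in $S$, so at each predecessor the relevant immediate successor is retained), and hence no initial segment of the cover can reach any point of $S$. This gives the bad cover directly, with no pressing-down needed.
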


We shall show that
\[
\{\text{Suslin}\}\subset \{\text{Lindelöf}\} \subsetneq \{\text{Aronszajn}\}.
\]
Here, by Lindelöf we mean Lindelöf with respect to the fine-wedge topology.

Given a partially ordered set $X$, we let $X^*$ denote the \emph{dual order} on $X$, that is $x<^* y$ if and only if $y<x$.

A \textit{tree} is a pair $(T,<_T)$ such that $<_T$ is a strict partial order on $T$ and $\{y\in T:y<_Tx\}$ is well-ordered for every $x\in T$. We will usually suppress the subscript in $<_T$ and identify the tree with its underlying set when there is no danger of confusion.

Elements of a tree are referred to as \emph{nodes} or \emph{points}. We say $x,y\in T$ are \textit{comparable}, denoted $x\parallel y$, if and only if $x\le_T y$ or $y\le_T x$. Otherwise, we say that $x$ and $y$ are \textit{incomparable}, denoted $x\perp y$. The \textit{height} of a node $x\in T$ is the order-type of the set $\{y\in T:y<_T x\}$, denoted $\h_T(x)$ (or simply $\h(x)$). Given an ordinal $\alpha$, \textit{level} $\alpha$ of the tree is the set $T_\alpha=\{x\in T:\h_T(x)=\alpha\}$. The \textit{height} $h_T(T)$ of $T$ is defined by $\h(T)=\min\{\alpha:T_\alpha=\emptyset\}$. Given an ordinal $\alpha<\h(T)$, we let $T\restr\alpha=\{x\in T:\h(x)<\alpha\}$, which is of course a subtree of $T$ of height $\alpha$. Given $x\in T$, we let $I_T(x)$ denote the set of \emph{immediate successors} of $x$, and write $I(x)$ when there is no possibility of confusion. 

\section{Square compactness}

We shall say a space $X$ is \textit{hereditarily $\kappa$-compact} if and only if every subspace of $X$ is $\kappa$-compact. For example, any space of weight less than $\kappa$ is hereditarily $\kappa$-compact.

A useful criterion for hereditary $\kappa$-compactness is

\begin{lem}\label{2}
Let $(X,\tau)$ be a topological space. Then $X$ is hereditarily $\kappa$-compact if and only if for every $\mathcal U\subset\tau$ there is some $\U_0\in[\U]^{<\kappa}$ with $\bigcup\U=\bigcup\U_0$.
\end{lem}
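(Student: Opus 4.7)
The plan is to prove both implications directly from the definitions, using the same correspondence between open covers of subspaces of $X$ and families of open sets in $X$.

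For the forward direction, I would take an arbitrary family $\U\subset\T$ and set $Y=\bigcup\U$. Giving $Y$ the subspace topology, $\U$ restricts to an open cover of $Y$ (each $U\in\U$ equals $U\cap Y$). Hereditary $\kappa$-compactness applied to $Y$ produces a subfamily $\U_0\in[\U]^{<\kappa}$ that covers $Y$, which is exactly saying $\bigcup\U_0=\bigcup\U$.

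For the reverse direction, let $Y\subset X$ and let $\V$ be an open cover of $Y$ in the subspace topology. For each $V\in\V$ pick $U_V\in\T$ with $V=U_V\cap Y$, and set $\U=\{U_V:V\in\V\}$. Then $Y\subset\bigcup\U$, so applying the hypothesis yields $\U_0\in[\U]^{<\kappa}$ with $\bigcup\U_0=\bigcup\U\supset Y$; the corresponding $V_U=U\cap Y$ for $U\in\U_0$ form a subcover of $\V$ of size $<\kappa$.

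There is no genuine obstacle here: both directions are essentially bookkeeping via the bijection between open sets of $Y$ and (equivalence classes of) open sets of $X$ intersected with $Y$. The one mild subtlety, in the reverse direction, is that the hypothesis only guarantees $\bigcup\U_0=\bigcup\U$ rather than covering some prescribed superset; but since $Y$ is already contained in $\bigcup\U$, this equality suffices to extract the desired small subcover of $\V$.
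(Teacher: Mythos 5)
Your proof is correct and takes essentially the same approach as the paper's: the forward direction applies hereditary $\kappa$-compactness to the subspace $\bigcup\U$, and your reverse direction is just the direct (rather than contrapositive) form of the paper's argument, with the translation between subspace covers of $Y$ and families of open sets of $X$ made explicit. There are no gaps.
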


\begin{proof}
$\Rightarrow$) Given $\U$, consider the subspace $\bigcup\U$.

$\Leftarrow$) Suppose $Y\subset X$ is not $\kappa$-compact. Fix some $\mathcal U\subset\tau$ such that $\bigcup\U\supset Y$ but there is no $\U_0\in[\U]^{<\kappa}$ with $\bigcup\U_0\supset Y$. Then $\bigcup\U_0\neq\bigcup \U$ for every $\U_0\in[\U]^{<\kappa}$.
\end{proof}

The following is obvious:

\begin{lem}\label{3}
Suppose that $(X,\tau)$ is $\kappa$-compact and $Y\subset X$ is closed. Then $Y$ is $\kappa$-compact with the subspace topology.
\end{lem}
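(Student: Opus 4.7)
The plan is to verify this via the standard argument that moves between open covers of the subspace $Y$ and open covers of the ambient space $X$, using the closedness of $Y$ to patch the cover with a single extra open set.

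First I would let $\mathcal{V}$ be an arbitrary cover of $Y$ by sets open in the subspace topology. By definition of the subspace topology, for each $V\in\mathcal{V}$ I can choose some $\widetilde V\in\tau$ with $V=\widetilde V\cap Y$. I would then form the family
\[
\mathcal{U} := \{\widetilde V : V\in\mathcal{V}\}\cup\{X\setminus Y\},
\]
which is a subset of $\tau$ since $Y$ is closed in $X$. Each point of $Y$ is covered by some $\widetilde V$, and each point outside $Y$ lies in $X\setminus Y$, so $\mathcal{U}$ is an open cover of $X$.

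Next, applying $\kappa$-compactness of $X$ to $\mathcal{U}$, I would extract a subcover $\mathcal{U}_0\subset\mathcal{U}$ with $|\mathcal{U}_0|<\kappa$. Setting
\[
\mathcal{V}_0 := \{V\in\mathcal{V} : \widetilde V\in\mathcal{U}_0\},
\]
I have $|\mathcal{V}_0|\le|\mathcal{U}_0|<\kappa$. For any $y\in Y$ there is some element of $\mathcal{U}_0$ containing $y$; this element is not $X\setminus Y$, so it is $\widetilde V$ for some $V\in\mathcal{V}_0$, and then $y\in\widetilde V\cap Y=V$. Hence $\mathcal{V}_0$ covers $Y$, proving $Y$ is $\kappa$-compact.

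There is essentially no obstacle here; the only subtlety is remembering to use that $Y$ is closed precisely to ensure that $X\setminus Y$ is open and can legitimately be added to $\mathcal{U}$. This is exactly why the author flags the statement as obvious.
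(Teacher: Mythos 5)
Your argument is correct and is exactly the standard one the paper has in mind when it declares the lemma obvious and omits the proof: lift the subspace-open cover to $\tau$, adjoin $X\setminus Y$ (open because $Y$ is closed), extract a subcover of size $<\kappa$ from the $\kappa$-compactness of $X$, and discard $X\setminus Y$. Nothing is missing.
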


As mentioned in the introduction, Hajnal and Juhász already proved that weak square compactness entails regularity. To deal with the (strong) inaccessibility of $\kappa$, we will generalize the classical construction of the Sorgenfrey line to larger linear orders.

\begin{defi}\label{5}
Let $(X,<)$ be a \emph{dlo} (dense linear order without end-points). The \textit{density} of $X$, denoted $d(X)$, is the cardinal
\[d(X)=\min\{|D|:D \text{ is dense in } X\}\]
This of course coincides with the density of $X$ as a topological spacer under the order topology.
\end{defi}
For example, $d(\R)=\aleph_0$. It is straightforward to show that $w(X)$, the weight of $X$ with respect to the order topology, is exactly $d(X)$.

\begin{defi}\label{8}
Given a dlo $(X,<)$, the family $\{[x,y):x,y\in X\wedge x<y\}$ forms a base for a topology on $X$, which we shall call the \textit{Sorgenfrey} topology.
\end{defi}

\begin{lem}\label{9}
Let $(X,<)$ be a dlo with $d(X)<\kappa$. Then the Sorgenfrey topology on $X$ is hereditarily $\kappa$-compact.
\end{lem}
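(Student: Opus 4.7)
The plan is to verify the criterion of Lemma \ref{2} directly. Fix a dense $D\subset X$ with $|D|<\kappa$, and let $\mathcal{U}$ be an arbitrary family of Sorgenfrey-open sets with $Y:=\bigcup\mathcal{U}$. For each $y\in Y$ I would choose some $U_y\in\mathcal{U}$ containing $y$ together with a basic interval $[a_y,b_y)\subset U_y$ with $a_y\le y<b_y$.

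The strategy is to split $Y$ into the part covered by the open (order-topology) intervals $(a_y,b_y)$ and a ``left-endpoint'' residue. Set $W:=\bigcup_{y\in Y}(a_y,b_y)$, open in the order topology on $X$. Since the order topology has weight $d(X)<\kappa$, it is hereditarily $\kappa$-compact (as noted at the start of the section), so I can extract $Y_0\in[Y]^{<\kappa}$ such that $\{(a_y,b_y):y\in Y_0\}$ still covers $W$.

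The heart of the argument, which I expect to be the main obstacle, is to bound the residual set $F:=Y\setminus W$. Two observations drive this. First, if $y\in F$ then $a_y=y$, for otherwise $y\in(a_y,b_y)\subset W$. Second, if $y<z$ are both in $F$ then $b_y\le z$, for otherwise $z\in(y,b_y)\subset W$. With these in hand, for each $y\in F$ I would pick $d_y\in D\cap(y,b_y)$ (nonempty because $X$ is a dlo and $D$ is dense in the order topology); the second observation then gives $d_y<b_y\le z<d_z$ whenever $y<z$ in $F$, so the map $y\mapsto d_y$ is strictly increasing and hence injective. Thus $|F|\le|D|<\kappa$.

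Finally, $\mathcal{U}_0:=\{U_y:y\in Y_0\cup F\}$ is a subfamily of $\mathcal{U}$ of size less than $\kappa$, and $\bigcup\mathcal{U}_0\supset W\cup F=Y$, so Lemma \ref{2} applies. This is the natural generalization of the classical proof that the Sorgenfrey line is hereditarily Lindelöf, with $D$ playing the role of $\Q$.
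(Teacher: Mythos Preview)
Your proof is correct and follows essentially the same route as the paper: split $\bigcup\mathcal U$ into the order-open part $W$ and a residue, use hereditary $\kappa$-compactness of the order topology (weight $<\kappa$) to thin $W$, and inject the residue into $D$ via $y\mapsto d_y$ using $d_y<b_y\le z<d_z$. The only cosmetic difference is that the paper reduces at the outset to $\mathcal U$ consisting of basic intervals $[x,y)$, whereas you work with arbitrary Sorgenfrey-open sets and choose basic subintervals pointwise; the core argument is identical.
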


\begin{proof}
Let $\U\subset \{[x,y):x,y\in X\}$ and let $W=\bigcup\{(x,y):[x,y)\in\U\}$. Obviously, $W$ is open with respect to the order topology on $X$, which has weight less than $\kappa$. By Lemma \ref{2}, $W=\bigcup\{(x,y):[x,y)\in\U_0\}$ for some $\U_0\in[\U]^{<\kappa}$. Let $A:=(\bigcup\U)\setminus W$.

\begin{nclaim}
$|A|<\kappa$.
\end{nclaim}

\begin{ncproof}
Fix $D\in[X]^{<\kappa}$ dense in the order topology. For each $x\in A$, find $[a_x,b_x)\in\U$ such that $x\in [a_x,b_x)$. Since $x\not\in W$, we infer that $x=a_x<b_x$, so we can pick some $d_x\in D$ with $x<d_x<b_x$. Now suppose $x,y\in A$ with $x<y$. Since $y\not\in W$, $b_x\le y$. Then $d_x<b_x\le y<d_y$, so $d_x<d_y$. Therefore, $x\mapsto d_x$ is an injective map from $A$ into $D$.
\end{ncproof}
For each $x\in A$, pick $U_x\in\U$ with $x\in U_x$. Let $\U_1=\{U_x:x\in A\}$. Clearly, $|\U_1|<\kappa$. We now have that $\U_2=\U_0\cup\U_1\in[\U]^{<\kappa}$ and $\bigcup\U_2=\bigcup\U$.\end{proof}

\begin{lem}\label{10}
Let $\kappa>\omega$ be a cardinal. Suppose that there is a dlo $(X,<)$ with $d(X)<\kappa=|X|$. Then $\kappa$ is not $\kappa$-square compact.
\end{lem}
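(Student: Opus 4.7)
My plan is to construct a $\kappa$-compact space $Y$ of weight at most $\kappa$ whose square fails to be $\kappa$-compact, generalising the classical proof that the Sorgenfrey plane is not Lindelöf. There the obstruction is the uncountable closed discrete anti-diagonal $\{(x,-x):x\in\R\}$. For an abstract dlo $X$ one cannot expect any order-reversing self-bijection, so I will instead take $Y=X\oplus X^*$, the topological sum of $X$ with the Sorgenfrey topology and $X^*$ (the dual order on $X$) with its associated Sorgenfrey topology; this device reintroduces the required order-reversing symmetry.

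Verifying the properties of $Y$ is routine. Since $d(X^*)=d(X)<\kappa$, Lemma \ref{9} yields that both $X$ and $X^*$ are $\kappa$-compact. The topological disjoint sum of two $\kappa$-compact spaces is itself $\kappa$-compact because for infinite $\kappa$ one has $\kappa_1+\kappa_2<\kappa$ whenever $\kappa_1,\kappa_2<\kappa$, and similarly $w(Y)\le\kappa$.

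The heart of the argument is to exhibit a closed discrete subset of size $\kappa$ inside $Y^2$. I will work inside the clopen subspace $X\times X^*$, whose basic opens have the form $[a,b)\times(c,d]$ with $a<b$ and $c<d$ in $X$, and consider the diagonal $\Delta=\{(x,x):x\in X\}$. Discreteness is automatic: for $x\in X$, any choice of $c<x<b$ (possible since $X$ has no endpoints) yields the basic neighborhood $[x,b)\times(c,x]$ of $(x,x)$, in which any $(x',x')\in\Delta$ must satisfy $x\le x'\le x$. For closedness, take $(a,b)\notin\Delta$, so $a\ne b$. If $a>b$, any basic neighborhood of $(a,b)$ automatically misses $\Delta$ since a diagonal point $(x,x)$ inside it would need $a\le x\le b$. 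If $a<b$, density gives some $c\in(a,b)\cap X$; then $[a,c)\times(c,b]$ is a basic neighborhood of $(a,b)$ disjoint from $\Delta$ since it would force the impossible $x<c<x$.

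Since $|\Delta|=\kappa$ and a discrete space of size $\kappa$ cannot itself be $\kappa$-compact, Lemma \ref{3} applied first to $\Delta\subset X\times X^*$ and then to the clopen (hence closed) inclusion $X\times X^*\subset Y^2$ yields that $Y^2$ is not $\kappa$-compact. As $Y$ is $\kappa$-compact of weight at most $\kappa$, this shows that $\kappa$ fails to be $\kappa$-square compact. The chief obstacle is recognising that attacking $X^2$ directly would require an order-reversing structure on $X$ that need not exist; replacing one factor by its dual is the cleanest way to recover the classical argument.
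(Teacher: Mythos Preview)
Your proof is correct and follows essentially the same approach as the paper. The paper phrases the reduction as ``replacing $X$ by $X\oplus X^*$ if necessary, we may assume $(X,<)$ admits an order-reversing involution $x\mapsto -x$'' and then exhibits the anti-diagonal $\{(x,-x):x\in X\}$ as a closed discrete set of size $\kappa$; your version keeps the two summands separate and uses the diagonal inside the clopen piece $X\times X^*\subset Y^2$, which is the same set viewed differently.
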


\begin{proof}
Replacing $X$ by $X\oplus X^*$ if necessary, we may assume that $(X,<)$ admits an order reversing involution, which we shall suggestively denote by $x\mapsto -x$.

Let $\tau$ be the Sorgenfrey topology on $X$. Note that $w(X,\tau)\le\kappa$, because $|X|\le\kappa$, and that $(X,\tau)$ is $\kappa$-compact by lemma \ref{9}. It therefore suffices to show that $X^2$ is not $\kappa$-compact with respect to the product topology. Let
\[
Y=\{(x,-x):x\in X\}.
\]
Since $x\mapsto -x$ is order-reversing, it is continuous with respect to the order topology $\tau_<$, hence $Y$ is closed in $(X^2,\tau_<\otimes\tau_<)$.  But $\tau_<\subset\tau$, so $Y$ is closed in $(X^2,\tau\otimes\tau)$. For each $x\in X$, pick $u_x,v_x\in X$ with $u_x<x<v_x$. Now observe that
\[
([x,v_x)\times [-x,-u_x))\cap Y=\{(x,-x)\}.
\]

We have shown that $Y$ is discrete in $(X^2,\tau\otimes\tau)$. Since $|Y|=\kappa$, $Y$ is not $\kappa$-compact, and so neither is $(X^2,\tau\otimes\tau)$ because $Y$ is closed.
\end{proof}

The goal now is to build large dlo's with small density. This will be possible, under certain cardinal arithmetic constraints. Our original construction was rather convoluted, and we thank Will Brian for suggesting the following simpler approach.

\begin{lem}\label{12}
Let $\kappa\ge\omega_1$. Suppose there exist infinite cardinals $\mu$ and $\theta$ such that $\mu^{<\theta}=\mu<\kappa\le\mu^\theta$. Then there is a dlo $X$ with $d(X)<\kappa=|X|$.
\end{lem}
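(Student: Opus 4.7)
The plan is to realize $X$ as a subset of the lexicographically ordered product $L_0^\theta$, where $L_0$ is a small dlo. First, I would fix any dlo $L_0$ of cardinality $\mu$---for instance, $\mu\times\Q$ under lexicographic order. Let $L = L_0^\theta$ ordered lexicographically: for distinct $f,g$, declare $f<g$ iff $f(\alpha)<_{L_0} g(\alpha)$ at the least coordinate $\alpha$ of disagreement. It is routine to check that $(L,<)$ is a dlo (linearity is immediate; density and lack of endpoints follow from the corresponding properties of $L_0$ by modifying $f$ at a single coordinate) and that $|L|=\mu^\theta\ge\kappa$.

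Next, I would identify the small dense subset $D = \{f\in L : f \text{ is eventually } 0\}$, for some fixed $0\in L_0$. Writing $D = \bigcup_{\alpha<\theta}\{f\in L: f\restr[\alpha,\theta)\equiv 0\}$, each piece has cardinality at most $\mu^{|\alpha|}\le\mu^{<\theta}=\mu$. The hypothesis $\mu^{<\theta}=\mu$ also forces $\theta\le\mu$ (otherwise $\mu^\mu=2^\mu>\mu$ would sit inside the supremum defining $\mu^{<\theta}$), so $|D|\le\theta\cdot\mu=\mu$. To show $D$ is order-dense in $L$: given $f<g$ with first disagreement at coordinate $\alpha$, density of $L_0$ provides $r$ strictly between $f(\alpha)$ and $g(\alpha)$; let $h$ agree with $f$ below $\alpha$, equal $r$ at $\alpha$, and be $0$ thereafter. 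Then $h\in D$ with $f<h<g$.

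Finally, since $|L|\ge\kappa$, pick any $Y\subset L$ with $|Y|=\kappa$ and set $X=Y\cup D$. Then $|X|=\kappa$, and $X$ inherits a linear order from $L$. Order-density of $D$ in $L$ ensures that $X$ is dense in itself (any two elements of $X$ have some $d\in D\subset X$ strictly between them) and has no endpoints (for any $x\in X$, pick $w\in L$ with $w<x$, then $d\in D\subset X$ with $w<d<x$; symmetric reasoning produces some $d'\in X$ with $x<d'$). Thus $X$ is a dlo of size $\kappa$ with $d(X)\le|D|\le\mu<\kappa$, as required. The main subtlety is not really an obstacle so much as a judicious choice of perspective: spotting that eventually-$0$ sequences form a dense subset of $L$ whose cardinality is controlled precisely by the hypothesis $\mu^{<\theta}=\mu$.
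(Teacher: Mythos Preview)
Your proof is correct and follows essentially the same strategy as the paper's: form a lexicographic power of length $\theta$, observe that the eventually-zero sequences constitute a dense subset of size $\mu^{<\theta}=\mu$, and then cut down to cardinality exactly $\kappa$ while retaining that dense set. The differences are in execution. First, you take the base $L_0$ to be a dlo of size $\mu$ rather than the ordinal $\mu$ itself; this is a small gain, since ${}^\theta\mu$ with the lexicographic order has the all-zeros sequence as a minimum, so strictly speaking the paper's $X\prec{}^\theta\mu$ inherits an endpoint that must be discarded. Second, to pass from $\mu^\theta$ down to $\kappa$, the paper invokes the downward L\"owenheim--Sk\"olem theorem to get $X\prec Y$ with $D\subset X$, which automatically yields that $X$ is a dlo and $D$ is dense in $X$; you instead take an arbitrary $Y\in[L]^\kappa$ and set $X=Y\cup D$, verifying density and absence of endpoints by hand. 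Your route is more elementary, while the paper's saves a line of verification at the cost of a model-theoretic citation.
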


\begin{proof}
Let $Y:={}^\theta\mu$, ordered lexicographically. Note that $|Y|=\mu^\theta\ge\kappa$. Let $D$ be the set of sequences in $Y$ which are eventually $0$. Then $D$ is dense in $Y$ and $|D|=\mu^{<\theta}<\kappa$. By the Downward Lowenheim-Skölem theorem, find $X\prec Y$ with $D\subset X$ and $|X|=\kappa$. Since $D$ is dense in $X$, $d(X)<\kappa$.
\end{proof}

\begin{thm}\label{12a}
Let $\kappa\ge\w_1$. If there are cardinals $\mu$ and $\theta$ such that $\mu^{<\theta}=\mu<\kappa\le \mu^\theta$, then $\kappa$ is not $\kappa$-square compact.
\end{thm}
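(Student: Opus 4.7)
The theorem is an immediate consequence of the two preceding lemmas, so the plan is simply to chain them together. Given cardinals $\mu$ and $\theta$ with $\mu^{<\theta}=\mu<\kappa\le\mu^\theta$, Lemma \ref{12} produces a dlo $(X,<)$ with $d(X)<\kappa=|X|$. This is precisely the hypothesis of Lemma \ref{10}, which then gives the conclusion that $\kappa$ is not $\kappa$-square compact.

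Since there is nothing to do beyond quoting the two lemmas, I do not anticipate any technical obstacle at this step. The substantive work has already been carried out: Lemma \ref{12} is where the cardinal arithmetic hypothesis is used to build the order, via a lexicographic construction on ${}^\theta\mu$ followed by a Löwenheim–Skolem reflection to cut the size down to $\kappa$ while preserving density; Lemma \ref{10} is where the Sorgenfrey-type topology and the antidiagonal $\{(x,-x):x\in X\}$ are used to exhibit a closed discrete subset of size $\kappa$ in the square, witnessing the failure of $\kappa$-square compactness. The statement of the theorem is essentially a repackaging of the combined content of these two lemmas into a clean cardinal-arithmetic criterion, which in subsequent sections can presumably be invoked to argue that $\kappa$-square compact cardinals must satisfy strong closure properties (such as being strong limit).
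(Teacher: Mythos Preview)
Your proposal is correct and matches the paper's own proof, which simply reads ``Immediate from Lemmas \ref{10} and \ref{12}.'' There is nothing to add.
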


\begin{proof}
Immediate from Lemmas \ref{10} and \ref{12}.
\end{proof}

\begin{cor}\label{13}
Suppose $\lambda\ge\omega$. Then $\lambda^+$ is not $\lambda^+$-square compact.
\end{cor}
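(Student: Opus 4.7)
The plan is to reduce to Theorem \ref{12a} by exhibiting suitable cardinals $\mu$ and $\theta$. The obvious choice is $\mu=\lambda$, which automatically gives $\mu<\lambda^+$ and reduces the problem to locating a $\theta$ with $\lambda^{<\theta}=\lambda$ and $\lambda^\theta\ge\lambda^+$. The guiding intuition is that, as we raise $\lambda$ to larger and larger exponents, the value must eventually jump above $\lambda$ (by Cantor's theorem applied to $\lambda^\lambda=2^\lambda$), and the first place this happens is precisely what we want.

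Concretely, I would define
\[
\theta=\min\{\sigma\text{ a cardinal}:\lambda^\sigma>\lambda\},
\]
which exists because $\lambda^\lambda>\lambda$, and satisfies $\omega\le\theta\le\lambda$. By the minimality of $\theta$, for every cardinal $\sigma<\theta$ we have $\lambda^\sigma=\lambda$; since for any ordinal $\eta<\theta$ we have $|\eta|<\theta$, it follows that $\lambda^{<\theta}=\sup_{\eta<\theta}\lambda^{|\eta|}=\lambda$. On the other hand, $\lambda^\theta>\lambda$ directly gives $\lambda^\theta\ge\lambda^+$.

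With these verifications in place, the hypotheses $\mu^{<\theta}=\mu<\lambda^+\le\mu^\theta$ of Theorem \ref{12a} are satisfied with $\mu=\lambda$, and the conclusion that $\lambda^+$ is not $\lambda^+$-square compact follows immediately. There is no real obstacle here; the only mild subtlety is checking that the supremum defining $\lambda^{<\theta}$ does not overshoot $\lambda$, which is handled by the minimality of $\theta$ together with the fact that cardinal exponentiation is monotone in the exponent.
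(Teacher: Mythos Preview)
Your proposal is correct and follows essentially the same approach as the paper: define $\theta$ as the least exponent at which $\lambda^\theta$ exceeds $\lambda$, verify $\lambda^{<\theta}=\lambda$ by minimality, and invoke Theorem~\ref{12a} with $\mu=\lambda$ and $\kappa=\lambda^+$. The only cosmetic difference is that the paper bounds $\theta\le\cf(\lambda)$ via K\"onig's theorem whereas you bound $\theta\le\lambda$ via Cantor's theorem; either suffices to show $\theta$ exists, and neither bound is used further.
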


\begin{proof}
Let $\theta:=\min\{\nu:\lambda^\nu>\lambda\}$. By König's lemma, $\theta\le\cf(\lambda)$, so $\lambda^{<\theta}=\lambda$ by the minimality of $\theta$. Now apply Lemma \ref{12} with $\kappa=\lambda^+$ and $\mu=\lambda$.
\end{proof}
In particular, if $\kappa$ is $\kappa$-square compact, then $\kappa$ is a limit cardinal, hence weakly inaccessible. In fact, this can be improved:

\begin{cor}\label{14}
Suppose $\kappa$ is $\kappa$-square compact. Then $\kappa$ is strongly inaccessible.
\end{cor}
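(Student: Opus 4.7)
The plan is to argue by contradiction, producing cardinals $\mu$ and $\theta$ that witness the hypothesis of Theorem \ref{12a}, thereby contradicting $\kappa$-square compactness. Since $\kappa$ is already known to be weakly inaccessible (by Corollary \ref{13} together with the Hajnal--Juh\'asz regularity result), any failure of strong inaccessibility must come from having $2^\lambda \ge \kappa$ for some $\lambda < \kappa$.

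First, I would take $\lambda$ least with this property. A quick cardinal arithmetic check shows $\lambda$ must be a limit ordinal: if $\lambda=\sigma+1$ then $2^\lambda=2^\sigma<\kappa$ by minimality of $\lambda$, contradicting $2^\lambda \ge \kappa$. I then set $\mu:=2^{<\lambda}$ and $\theta:=\cf(\lambda)$, and check the three conditions $\mu<\kappa$, $\mu^\theta\ge\kappa$, and $\mu^{<\theta}=\mu$.

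The bound $\mu<\kappa$ follows from regularity of $\kappa$, since $\mu$ is a supremum of $\lambda<\kappa$ many cardinals, each below $\kappa$ by minimality of $\lambda$. For $\mu^\theta\ge\kappa$, I would use the standard identity $(2^{<\lambda})^{\cf(\lambda)}=2^\lambda$: picking a sequence $\langle\lambda_i:i<\theta\rangle$ cofinal in $\lambda$, one has $2^\lambda=\prod_i 2^{\lambda_i}\le\mu^\theta\le(2^\lambda)^\theta=2^\lambda$.

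The main obstacle is verifying $\mu^{<\theta}=\mu$. Fix $\nu<\theta=\cf(\lambda)$ and an arbitrary $f\colon\nu\to 2^{<\lambda}$. Each value $f(\alpha)$ lies in $2^{\xi_\alpha}$ for some $\xi_\alpha<\lambda$, and since $\nu<\cf(\lambda)$ the assignment $\alpha\mapsto\xi_\alpha$ is bounded in $\lambda$ by some single $\xi<\lambda$. Hence $f$ is coded by an element of $(2^\xi)^\nu$, which has cardinality at most $2^{\max(\xi,\nu)}\le\mu$, and taking the supremum over $\nu<\theta$ yields $\mu^{<\theta}=\mu$. Theorem \ref{12a} then produces the desired contradiction, completing the argument.
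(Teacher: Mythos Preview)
Your argument is correct and, like the paper, reduces to Theorem~\ref{12a}; but your choice of $\mu$ and $\theta$ is genuinely different and somewhat more streamlined. The paper first isolates the \emph{least exponent} $\theta$ for which some base $\lambda<\kappa$ satisfies $\lambda^\theta\ge\kappa$, then proves this $\theta$ is regular via a cofinality argument, sets $\mu=\lambda^{<\theta}$, and establishes $\mu^{<\theta}=\mu$ through a case split on whether $\alpha\mapsto\lambda^\alpha$ is eventually constant below $\theta$. You instead take the least $\lambda$ with $2^\lambda\ge\kappa$ and put $\mu=2^{<\lambda}$, $\theta=\cf(\lambda)$. This buys you two simplifications: regularity of $\theta$ is automatic, and your verification of $\mu^{<\theta}=\mu$ is a single uniform computation (using only that $\nu<\cf(\lambda)$ lets one bound the lengths $\xi_\alpha$), with no case analysis. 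Two minor remarks: the ``$\lambda$ is a limit ordinal'' step is superfluous, since any infinite cardinal is already a limit ordinal and your computation works uniformly whether $\lambda$ is a limit or successor \emph{cardinal}; and in the last step the bound $\xi$ depends on $f$, so strictly speaking you obtain $\mu^\nu\le\sum_{\xi<\lambda}2^{|\xi|\cdot\nu}\le\lambda\cdot\mu=\mu$ after summing over the possible $\xi<\lambda$.
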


\begin{proof}
The fact that $\kappa$ is regular under the hypothesis was already mentioned in the introduction, and follows from \cite[Theorem 1]{hajnalSquarecompactCardinals1973}.

Suppose $\kappa$ is not strong limit. Let
\[
\theta=\min\{\nu:\exists\lambda\,(\nu\le\lambda<\kappa \le \lambda^\nu)\}.
\]
To see that this is well defined, fix $\delta<\kappa$ so that $2^\delta\ge\kappa$, and take $\lambda=\nu=\delta$.

Having fixed $\theta$, let $\lambda<\kappa$ be the least witness to the definition of $\theta$, that is $\theta \le \lambda < \kappa \le \lambda^\theta$ and $\lambda$ is least with these properties. Note that, if $\alpha<\theta$, then $\lambda^\alpha<\kappa$, since otherwise $\alpha$ contradicts the minimal choice of $\theta$.
\begin{claim}
$\theta$ is regular.
\end{claim}
\begin{cproof}
Suppose not, say $\theta^*=\cf(\theta)<\theta$. Fix $\seq{\theta_\xi:\xi<\theta^*}$ cofinal in $\theta$. By the minimality of $\theta$, $\lambda^{\theta_\xi}<\kappa$ for every $\xi<\theta^*$. Let $\lambda^*:=\sup\{\lambda^{\theta_\xi}:\xi<\theta^*\}$. Since $\kappa$ is regular and $\theta^*<\theta<\kappa$, it follows that $\lambda^*<\kappa$. We therefore have
\[
\kappa\le\lambda^\theta=\prod_{\xi<\theta^*}\lambda^{\theta_\xi}\le({\lambda^*})^{\theta^*}
\]
This contradicts the minimality of $\theta$.
\end{cproof}
Put $\mu=\lambda^{<\theta}$. Again, $\mu<\kappa$, because $\lambda^\alpha<\kappa$ for $\alpha<\theta$ and $\theta<\kappa=\cf(\kappa)$. Also, $\mu^\theta\ge\lambda^\theta\ge\kappa$. 

\begin{claim}
$\mu^{<\theta}=\mu$. 
\end{claim}

\begin{cproof}
We consider two separate cases.

\underline{Case 1:} $\alpha\mapsto\lambda^\alpha$ is eventually constant for $\alpha<\theta$. Note that this includes the case when $\theta$ is a successor cardinal. By definition of $\mu$, the eventual constant value must be $\mu$, so $\lambda^\alpha=\mu$ for all large enough $\alpha<\theta$. But then $\mu^\alpha=\mu$ whenever $\alpha<\theta$ is sufficiently big, hence $\mu^{<\theta}=\mu$.

\underline{Case 2:} $\lambda^\alpha$ is not eventually constant for $\alpha<\theta$. As $\theta$ is regular, $\cf(\mu)=\theta$. So, if $\alpha<\theta$, we have
\[
\mu^\alpha=\sum_{\beta<\theta}(\lambda^\beta)^\alpha=\mu.
\]
Therefore, $\mu^{<\theta}=\mu$.
\end{cproof}
Therefore, $\mu,\theta$ and $\kappa$ satisfy the conditions of Theorem \ref{12}, so $\kappa$ is not $\kappa$-square compact.
\end{proof}
\begin{thm}\label{15}
Let $\kappa$ be an uncountable cardinal. Then $\kappa$ is weakly compact if and only if it is $\kappa$-square compact.
\end{thm}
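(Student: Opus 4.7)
The plan is to deduce this as an essentially immediate consequence of the Buhagiar--D\v{z}amonja theorem stated in the introduction, combined with Corollary \ref{14}. Recall that their result says: if $\kappa$ is uncountable and $\kappa^{<\kappa}=\kappa$, then $\kappa$ is weakly compact iff weakly square compact. So the only thing missing, in either direction of the equivalence, is that $\kappa^{<\kappa}=\kappa$ holds under the relevant hypothesis.

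For the $(\Rightarrow)$ direction, I would appeal to the standard fact that a weakly compact cardinal is strongly inaccessible, and that any strongly inaccessible $\kappa$ satisfies $\kappa^{<\kappa}=\kappa$. The latter is a short cardinal-arithmetic computation: if $\lambda<\kappa$ then every $f\colon\lambda\to\kappa$ is bounded in $\kappa$ by regularity, so $\kappa^\lambda=\sup_{\alpha<\kappa}|\alpha|^\lambda$, and each $|\alpha|^\lambda<\kappa$ because $\kappa$ is a strong limit. Then the Buhagiar--D\v{z}amonja theorem yields that $\kappa$ is weakly square compact, which is by definition $\kappa$-square compact.

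For the $(\Leftarrow)$ direction, the key input is Corollary \ref{14}, which says that any $\kappa$-square compact $\kappa$ is strongly inaccessible. Combined with the arithmetic computation above, this gives $\kappa^{<\kappa}=\kappa$, and a second application of Buhagiar--D\v{z}amonja yields that $\kappa$ is weakly compact. There is no real obstacle in this argument; the whole theorem is just the packaging of Corollary \ref{14} (which did the work of removing the cardinal arithmetic assumption) with the cited result from \cite{buhagiarSquareCompactnessFilter2021}.
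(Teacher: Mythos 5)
Your proposal is correct and matches the paper's argument: both directions reduce to the Buhagiar--D\v{z}amonja theorem by verifying $\kappa^{<\kappa}=\kappa$, with Corollary \ref{14} supplying inaccessibility in the backwards direction exactly as the paper does. The only cosmetic difference is in the forwards direction, where the paper cites Hajnal--Juh\'asz directly while you route it through Buhagiar--D\v{z}amonja via the standard fact that weakly compact cardinals are inaccessible (a fact the paper also records in its introduction), so the two proofs are essentially identical.
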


\begin{proof}
The forwards direction can be found in \cite[ Theorem 2]{hajnalSquarecompactCardinals1973}. The backwards direction is in \cite[Theorem 5.1]{buhagiarSquareCompactnessFilter2021}, under the additional hypothesis that $\kappa^{<\kappa}=\kappa$. But this is redundant when $\kappa$ is $\kappa$-square compact, because $\kappa$ is strongly inaccessible by Corollary \ref{14}.
\end{proof}

\nocite{kunenHandbookSettheoreticTopology1984,comfortTheoryUltrafilters1974}

To make the paper self-contained, we include a proof that strong compactness implies square compactness.

Recall that a \emph{subbase} for a topology $\tau$ (on a set $X$) is a family $\mathcal{S}$ such that $\tau$ is the smallest topology on $X$ including $\mathcal{S}$. Equivalently, the set of finite intersections of members of $\mathcal{S}$ is a base for $\tau$.

\begin{lem}\label{Alexander}
Let $\kappa$ be a strongly compact cardinal and $X$ a topological space. Suppose that there exists a subbase $\mathcal{S}$ such that for every cover of $X$ using members of $\mathcal{S}$ there exists a subcover of size $<\kappa$. Then $X$ is $\kappa$-compact.
\end{lem}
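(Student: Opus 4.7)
The plan is to argue by contradiction using the standard Tarski--Keisler characterization of strong compactness: $\kappa$ is strongly compact if and only if every $\kappa$-complete filter on any set extends to a $\kappa$-complete ultrafilter on that set. Suppose, toward contradiction, that $X$ is not $\kappa$-compact, as witnessed by some open cover $\U$ of $X$ with no subcover of size less than $\kappa$.

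First I would form the family $\mathcal{F}=\{X\setminus \bigcup\V : \V\in [\U]^{<\kappa}\}$. Each member of $\mathcal{F}$ is nonempty by the choice of $\U$, and since strongly compact cardinals are regular, $\mathcal{F}$ is closed under intersections of fewer than $\kappa$ of its elements; thus it is a $\kappa$-complete filter base on $X$. Using strong compactness, I extend it to a $\kappa$-complete ultrafilter $\mathcal{D}$ on $X$. Observe that for every $x\in X$, taking any $U\in\U$ with $x\in U$ yields $X\setminus U\in\mathcal{F}\subset\mathcal{D}$, so each point of $X$ has an open neighborhood whose complement lies in $\mathcal{D}$.

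Next I would use the ultrafilter property to upgrade this open neighborhood to a subbasic one. Given $x\in X$, fix a basic open set $S_1\cap\cdots\cap S_n\ni x$ contained in the open set above, with each $S_i\in\mathcal{S}$; then $\bigcup_{i\le n}(X\setminus S_i)=X\setminus(S_1\cap\cdots\cap S_n)\in\mathcal{D}$, so as $\mathcal{D}$ is an ultrafilter some $X\setminus S_i\in\mathcal{D}$, while $x\in S_i$. Call this set $S_x$. Then $\{S_x : x\in X\}$ is a cover of $X$ by members of $\mathcal{S}$, so by the hypothesis of the lemma it admits a subcover $\{S_{x_\alpha}:\alpha<\delta\}$ with $\delta<\kappa$. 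By $\kappa$-completeness of $\mathcal{D}$, the set $\bigcap_{\alpha<\delta}(X\setminus S_{x_\alpha})=X\setminus\bigcup_{\alpha<\delta}S_{x_\alpha}=\emptyset$ lies in $\mathcal{D}$, a contradiction. The only substantive step is the appeal to the filter-extension characterization of strong compactness; the rest is the usual Alexander subbase argument adapted so that finite intersections of subbasic neighborhoods are handled by a $\kappa$-complete ultrafilter, and I do not anticipate any real obstacle beyond verifying that $\mathcal{F}$ is genuinely $\kappa$-complete, which follows from the regularity of $\kappa$.
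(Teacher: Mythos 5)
Your proof is correct and is essentially the paper's argument in dual form: where you extend the $\kappa$-complete filter generated by the sets $X\setminus\bigcup\V$ (for $\V\in[\U]^{<\kappa}$) to a $\kappa$-complete ultrafilter and use the ultrafilter property on the finite union $\bigcup_{i\le n}(X\setminus S_i)$, the paper extends the $\kappa$-complete ideal generated by $\U$ to a prime $\kappa$-complete ideal and uses primeness on the finite intersection $\bigcap_{i<n}S_i$. The two arguments correspond exactly under complementation, so there is nothing further to add.
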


\begin{proof}
Let $\B$ be the collection of finite intersections of sets in $\mathcal S$, so $\B$ is a base for $X$. If suffices to argue that every open cover of $X$ consisting of members of $\B$ has a subcover of size $<\kappa$. Suppose towards a contradiction that this is not the case. Let $\U\subset\B$ be a cover of $X$ such that no subset of $\U$ of size $<\kappa$ covers $X$. Let $\I$ be the $\kappa$-complete ideal generated by $\U$:
\[
\I=\left\{A\subset X:\exists\, \mathcal{U}_0\in [\U]^{<\kappa}(A\subset \bigcup\U_0) \right\}
\]
By our assumption on $\U$, $X\not\in\I$, so $\I$ is proper. Since $\kappa$ is strongly compact, there is a prime $\kappa$-complete ideal $\J$ on $X$ such that $\I\subset\J$.

\begin{nclaim}
If $x\in X$ then there is some $W_x\in \J\cap\mathcal{S}$ such that $x\in W_x$.
\end{nclaim}

\begin{ncproof}
Fix $x\in X$. Since $\U\subset\B$ covers $X$, by definition of $\B$ there exists a finite sequence $\seq{W_i^x:i<n_x}\in \mathcal{S}^{n_x}$, where $n_x\in\w$, such that $x\in \bigcap_{i<n_x} W_i^x$. By definition of $\I$, $ \bigcap_{i<n_x} W_i^x\in\J $. Since $\J$ is prime, there exists $i_x<n_x$ such that $W_{i_x}^x\in\J$. Put $W_x=W_{i_x}^x$. This works.
\end{ncproof}

Using the claim we can choose, for each $x\in X$, a set $W_x\in \J\cap\mathcal{S}$ such that $x\in W_x$. Obviously, $\{W_x:x\in X\}$ covers $X$. Since $\{W_x:x\in X\}\subset\mathcal S$, our hypothesis on $\mathcal S$ implies the existence of $Y\in [X]^{<\kappa}$ such that $\{W_x:x\in Y\}$ covers $X$. In symbols, $X=\bigcup_{x\in Y}W_x$. But $W_x\in\J$, $\J$ is $\kappa$-complete and $|Y|<\kappa$, so $X\in\J$. This contradicts that $\J$ is a proper ideal.
\end{proof}

\begin{cor}[folklore]\label{strong}
Every strongly compact cardinal is square compact.
\end{cor}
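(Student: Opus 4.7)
The plan is to apply Lemma \ref{Alexander} to the product space $X^2$, using the standard subbase for the product topology. Let $X$ be $\kappa$-compact; we want to show that $X^2$ is $\kappa$-compact.

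Take as subbase
\[
\mathcal{S}=\{\pi_1^{-1}(U):U\subset X\text{ open}\}\cup\{\pi_2^{-1}(U):U\subset X\text{ open}\},
\]
where $\pi_1,\pi_2:X^2\to X$ are the coordinate projections; this is by definition a subbase for the product topology. Suppose $\mathcal{U}\subset\mathcal{S}$ covers $X^2$. Partition $\mathcal{U}=\mathcal{U}_1\cup\mathcal{U}_2$ according to which projection the set is pulled back from, and set
\[
A=\bigcup\{U:\pi_1^{-1}(U)\in\mathcal{U}_1\},\qquad B=\bigcup\{V:\pi_2^{-1}(V)\in\mathcal{U}_2\}.
\]
The key observation is that $\bigcup\mathcal{U}=(A\times X)\cup(X\times B)$, so $\mathcal{U}$ covers $X^2$ if and only if $(X\setminus A)\times(X\setminus B)=\emptyset$, which forces $A=X$ or $B=X$.

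Without loss of generality assume $A=X$, so $\{U:\pi_1^{-1}(U)\in\mathcal{U}_1\}$ is an open cover of $X$. By $\kappa$-compactness of $X$, there is a subcollection of size $<\kappa$ still covering $X$, and pulling back through $\pi_1$ gives a subcollection of $\mathcal{U}$ of size $<\kappa$ covering $X^2$. Lemma \ref{Alexander} then yields that $X^2$ is $\kappa$-compact, so $\kappa$ is square compact.

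I do not anticipate any real obstacle; the heavy lifting was done in Lemma \ref{Alexander}, and the remainder is the standard Tychonoff-style reduction that a cover of a binary product by subbasic opens must already cover in one coordinate. The same argument works verbatim for $X\times Y$ with two possibly distinct $\kappa$-compact spaces, matching the equivalent formulation of square compactness stated in the introduction.
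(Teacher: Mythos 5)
Your proof is correct and follows essentially the same route as the paper: the same subbase of preimages under the two projections, the same observation that a subbasic cover of $X^2$ must already cover in one coordinate (your $(X\setminus A)\times(X\setminus B)=\emptyset$ is exactly the paper's claim, proved contrapositively), and the same appeal to Lemma \ref{Alexander} to finish.
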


\begin{proof}
Let $\kappa$ be strongly compact. Suppose $(X,\T)$ is a $\kappa$-compact space and let
\[
\mathcal{S}:=\{X\times U:U\in\T\}\cup \{U\times X:U\in\T\}.
\]
It is clear that $\mathcal{S}$ is a subbase for the product topology on $X^2$. 

Let $\U\subset{\mathcal{S}}$ cover $X$, we argue that $\U$ has a subcover of size $<\kappa$. By Lemma \ref{Alexander}, this is enough to complete the proof. Put $\U_0:=\U\cap (\T\times \{X\})$ and $\U_1=\U\cap(\{X\}\times \T)$, so that $\U=\U_0\cup \U_1$. Let $\V_0=\{V:V\times X\in \U_0\}$ and $\V_1=\{V:X\times V\in\V_1\}$.

\begin{nclaim}
At least one of $\V_0$ or $\V_1$ covers $X$.
\end{nclaim}

\begin{ncproof}
Suppose that $\bigcup\V_0\neq X$ and $\bigcup\V_1\neq X$. Pick $x_0\in X\setminus\bigcup\V_0$ and $x_1\in X\setminus \bigcup\V_1$. By assumption, $\U$ covers $X^2$, so $(x_0,x_1)\in U$ for some $U\in\U$. There are now two possibilities: either $U=V\times X$ for some $V\in\T$, in which case $x_0\in\bigcup\V_0$, or $U=X\times V$ for some $V\in\T$, in which case $x_1\in\bigcup\V_1$. In either case, we get a contradiction.
\end{ncproof}

Suppose that $\V_0$ covers $X$, the other case is analogous. Let $\V$ be a subcover of $\V_0$ of size $<\kappa$. Then $\{V\times X:V\in\V\}$ is a subcover of $\U$ of size $<\kappa$, which completes the proof.
\end{proof}

\section{Preliminaries on trees}

A \emph{chain} in a tree $T$ is a subset of $T$ which is linearly ordered by $<_T$. A \textit{branch} is a maximal chain. A \emph{cofinal branch} is a branch which meets every level of $T$.

Given a regular cardinal $\kappa$, we say that $T$ is a \textit{$\kappa$-tree} if and only if $\h(T)=\kappa$ and $|T_\alpha|<\kappa$ for every $\alpha<\kappa$. We say $T$ is a $\kappa$-\textit{Aronszajn tree} if and only if it is a $\kappa$-tree with no cofinal branches. An Aronszajn tree is just an $\aleph_1$-Aronszajn tree. Classically:

\begin{thm*}[König, \cite{konigUberSchlussweiseAus1927}]
There are no $\aleph_0$-Aronszajn trees.
\end{thm*}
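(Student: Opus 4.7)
The plan is to adapt the classical König infinity lemma. Given an $\aleph_0$-tree $T$, first note that $T=\bigcup_{n<\w}T_n$ is a countable union of finite sets, yet $T$ is infinite because $T_n\ne\emptyset$ for each $n<\w$ (otherwise the height would be at most $n$). For $x\in T$, write $T^x=\{y\in T:x\le y\}$. I aim to recursively select nodes $x_n\in T_n$ with $x_n<x_{n+1}$ such that each $T^{x_n}$ is infinite; the resulting set $\{x_n:n<\w\}$ is then a chain meeting every level, hence extends to a cofinal branch, contradicting $T$ being Aronszajn.

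The combinatorial engine is the observation that if $x\in T$ and $T^x$ is infinite, then some $y\in I(x)$ has $T^y$ infinite. To see this, I would first verify that every $z\in T^x\setminus\{x\}$ lies above some immediate successor of $x$: the predecessors of $z$ form a well-ordered set containing $x$, and the unique element at level $\h(x)+1$ in this set is an immediate successor of $x$ that is $\le z$. Hence
\[
T^x=\{x\}\cup\bigcup_{y\in I(x)}T^y.
\]
Since $I(x)\subset T_{\h(x)+1}$ is finite, this is a finite union, so infinitude of $T^x$ forces some $T^y$ to be infinite by pigeonhole.

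The base case uses the same idea: $T=\bigcup_{x\in T_0}T^x$ is a finite union of sets (since $T_0$ is finite), and $T$ is infinite, so pick $x_0\in T_0$ with $T^{x_0}$ infinite. Inductively, given $x_n\in T_n$ with $T^{x_n}$ infinite, the observation above supplies some $x_{n+1}\in I(x_n)\subset T_{n+1}$ with $T^{x_{n+1}}$ infinite. The sequence $\seq{x_n:n<\w}$ is strictly increasing and meets every level, yielding the desired cofinal branch.

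There is really no obstacle; König's lemma is a textbook result and the only subtlety worth spelling out is the identification of $T^x\setminus\{x\}$ with $\bigcup_{y\in I(x)}T^y$, which I would justify via the well-ordering of predecessor chains as above.
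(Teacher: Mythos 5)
Your proof is correct; it is the standard König infinity lemma argument (restrict attention to nodes $x$ with $T^x$ infinite and use the pigeonhole principle at each level, which is finite because $T$ is an $\aleph_0$-tree). The paper states this result as classical background with a citation and gives no proof of its own, so there is nothing to compare against; your write-up, including the justification that $T^x\setminus\{x\}=\bigcup_{y\in I(x)}T^y$ via the well-ordering of $\darrow z$, is complete and accurate.
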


\begin{thm*}[Aronszajn, see \cite{speckerProblemeSikorski1949}]
There is an Aronszajn tree. 
\end{thm*}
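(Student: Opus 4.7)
The plan is to construct an $\w_1$-tree $T$ whose nodes at level $\alpha$ are strictly increasing, bounded functions $f : \alpha \to \Q$, ordered by end-extension. Once such a tree is produced with countable levels and height $\w_1$, the absence of a cofinal branch is automatic: a cofinal branch would union to a strictly increasing map $\w_1 \to \Q$, contradicting $|\Q| = \aleph_0$. Thus the whole substance of the proof is to carry the recursion on $\alpha < \w_1$ through every limit stage while keeping each level countable and rich enough to continue.

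The natural inductive invariant to maintain is a density condition: for every $\beta < \alpha$, every $f \in T_\beta$, and every rational $r > \sup\ran(f)$, there exists some $g \in T_\alpha$ extending $f$ with $\sup\ran(g) < r$. Start with $T_0 = \{\emptyset\}$; at successor stages $\alpha + 1$, for each $f \in T_\alpha$ adjoin all extensions $f \cup \{(\alpha, q)\}$ with $q \in \Q$ and $q > \sup\ran(f)$. Countability of $T_{\alpha+1}$ and the density invariant are both obviously preserved.

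The limit stage $\lambda$ is where the real work happens. By induction $T_{<\lambda} := \bigcup_{\beta < \lambda} T_\beta$ is countable, so there are only countably many pairs $(f, r)$ with $f \in T_{<\lambda}$ and $r \in \Q$ satisfying $r > \sup\ran(f)$. For each such pair I would fix a cofinal $\w$-sequence in $\lambda$ starting at $\dom(f)$, together with a strictly increasing sequence of rationals in $(\sup\ran(f), r)$ bounded strictly below $r$, and then apply the density invariant recursively to climb up the tree through the cofinal sequence, keeping the range of each approximation bounded by the corresponding rational. The union is a strictly increasing function $\lambda \to \Q$ extending $f$ with supremum strictly less than $r$; adding one such node per pair to $T_\lambda$ keeps $T_\lambda$ countable and preserves the density invariant.

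The main obstacle is precisely this limit-stage construction: one must carefully interleave the cofinal sequence in $\lambda$, the intermediate rational upper bounds, and the successive applications of the density invariant so that the union is defined on all of $\lambda$ yet still strictly bounded below $r$. Once that is handled, the resulting $T$ is an $\w_1$-tree with countable levels and no cofinal branch, hence an Aronszajn tree.
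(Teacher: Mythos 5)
Your construction is correct: this is the classical ``bounded increasing sequences of rationals'' tree, and the limit-stage argument you sketch (climb along an $\w$-cofinal sequence in $\lambda$ while squeezing the ranges under an increasing sequence of rationals bounded strictly below $r$) is exactly what is needed to keep levels countable and preserve the density invariant; the absence of a cofinal branch then follows since a branch would yield a strictly increasing, hence injective, map $\w_1\to\Q$. Note that the paper does not actually prove this statement --- it is cited as a classical result --- but where the paper does build Aronszajn trees (Section 5, and Lemma \ref{finAZFC}) it uses a genuinely different route: a coherent sequence of injections $e_\alpha:\alpha\to\w$ with $e_\alpha=^*e_\beta\restr\alpha$, from which the tree $T^{\vec e}$ of injective functions $s\in{}^\alpha\w$ with $s=^*e_\alpha$ is formed. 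Your approach is more elementary and self-contained, needing only $\Q$ and a bookkeeping recursion; the coherent-sequence approach requires constructing $\vec e$ first, but it buys the additional structure the paper exploits later (coherence mod finite is what makes the characteristic-function tree of Lemma \ref{finAZFC} finitely splitting and still Aronszajn), which the rationals construction does not readily provide. One small point worth making explicit in a final write-up: you should check that the levels you build are downward closed, i.e.\ that $g\restr\beta\in T_\beta$ for every $g\in T_\alpha$ and $\beta<\alpha$; this is immediate by induction from your successor and limit constructions, but it is what makes $T_\alpha$ genuinely the $\alpha$-th level of the resulting tree.
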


\begin{thm*}[Specker, \cite{speckerProblemeSikorski1949}]
If $\sf CH$ holds, then there is an $\aleph_2$-Aronszajn tree.
\end{thm*}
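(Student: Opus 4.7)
The strategy is to construct an $\aleph_2$-Aronszajn tree by mimicking the standard construction of an Aronszajn tree, with all cardinals shifted up by one. The cardinal-arithmetic input we need is $\aleph_1^{\aleph_0}=\aleph_1$, which follows from $\mathsf{CH}$.

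I would take $T$ to consist of certain injective functions $f:\alpha\to\omega_1$ with $\alpha<\omega_2$, ordered by end-extension. The point of insisting on injectivity is that any cofinal branch of $T$ would then be an injection $\omega_2\to\omega_1$, which does not exist; so no separate argument is needed to rule out cofinal branches, and it remains only to build $T$ with height $\omega_2$ and every level nonempty of cardinality $\le\aleph_1$.

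The construction of $T$ proceeds by recursion on the level $\alpha<\omega_2$, starting from $T_0=\{\emptyset\}$. At successor stages, I extend each $f\in T_\alpha$ by mapping $\alpha$ to some new value; a suitable side invariant (for instance, $|\omega_1\setminus\ran(f)|=\aleph_1$) keeps this possible and is easy to preserve. At a limit $\alpha<\omega_2$ of countable cofinality, the cofinal branches of $T\restr\alpha$ are determined by $\omega$-sequences of nodes, and there are at most $|T\restr\alpha|^{\aleph_0}\le\aleph_1^{\aleph_0}=\aleph_1$ of them (this is the one place $\mathsf{CH}$ enters directly), so all suitable branches may be put into $T_\alpha$.

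The main obstacle is limits $\alpha<\omega_2$ of cofinality $\omega_1$, where a priori $T\restr\alpha$ may admit up to $\aleph_1^{\aleph_1}=2^{\aleph_1}$ cofinal branches. To control this I would build, in tandem with $T$, a coherent system of designated extensions: for each $f\in T$ at some level $\beta$, a sequence $(\widehat f_\gamma:\beta\le\gamma<\omega_2)$ with $\widehat f_\beta=f$, $\widehat f_\gamma\in T_\gamma$, and $\widehat f_{\gamma_1}\subset\widehat f_{\gamma_2}$ whenever $\gamma_1\le\gamma_2$, subject to the coherence condition $\widehat f_\gamma=\widehat g_\gamma$ whenever $f\subset g$ and $\gamma\ge\h(g)$. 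Then at a limit $\alpha$ of cofinality $\omega_1$, define $T_\alpha$ to consist of the unions of the cofinal branches $(\widehat f_\gamma)_{\beta\le\gamma<\alpha}$ for $\beta<\alpha$ and $f\in T_\beta$; this automatically gives $|T_\alpha|\le|T\restr\alpha|\le\aleph_1$ and produces an extension of every earlier node. The technical heart of the argument, and its main difficulty, is showing that such a coherent system can in fact be constructed by induction alongside $T$ itself; this will be done by a careful bookkeeping, using $\mathsf{CH}$ to see that the $\aleph_0$-many commitments added at each successor or countable-cofinality stage aggregate to at most $\aleph_1$ nodes per level.
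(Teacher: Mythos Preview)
The paper does not prove this theorem: it appears in the preliminaries section as a classical background result, cited to Specker, with no argument given. There is therefore nothing in the paper to compare your proposal against.

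Assessing your sketch on its own merits, the overall strategy is standard and sound, but two points need attention. First, your coherence condition ``$\widehat f_\gamma=\widehat g_\gamma$ whenever $f\subset g$ and $\gamma\ge\h(g)$'' is too strong as written: taking $\gamma=\h(g)$ yields $\widehat f_{\h(g)}=g$ for \emph{every} $g\supset f$, which is impossible as soon as $f$ has more than one extension at some level. What you want is the weaker statement that the designated paths cohere along themselves, e.g.\ $\widehat{(\widehat f_\delta)}_\gamma=\widehat f_\gamma$ for $\delta\le\gamma$; in fact you can drop coherence entirely, since the designated extensions alone already give $|T_\alpha|\le|T\restr\alpha|\le\aleph_1$ at limits of cofinality $\omega_1$. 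Second, the invariant $|\omega_1\setminus\ran(f)|=\aleph_1$ is not automatically preserved at such limits: an increasing $\omega_1$-union of injections each with co-$\aleph_1$ range may well have co-countable or even empty complement in $\omega_1$. This is repairable (for instance by reserving, for each designated path, a fixed co-$\aleph_1$ set of values it will permanently avoid), but it is a genuine additional commitment beyond the ``careful bookkeeping'' about counting nodes that you mention.
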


\begin{thm*}[Mitchell-Silver, \cite{mitchellAronszajnTreesIndependence1972}]
The theories
\begin{itemize}
\item $\sf ZFC$ + ``There is a weakly compact cardinal",
\item $\sf ZFC$ + ``There are no $\aleph_2$-Aronszajn trees"
\end{itemize}
are equiconsistent.
\end{thm*}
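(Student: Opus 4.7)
My plan is to attack the equiconsistency one direction at a time, as the two implications use quite different technology.

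\textbf{Silver's direction.} Starting from a weakly compact cardinal $\kappa$ in $V$, I would force with the Lévy collapse $\mathbb{P} := \mathrm{Coll}(\omega_1,{<}\kappa)$, which is $\kappa$-cc and turns $\kappa$ into $\omega_2$, to verify the tree property at $\omega_2$ in $V[G]$. Given a $\mathbb{P}$-name $\dot T$ for an $\omega_2$-tree with underlying set $\kappa$, pick a transitive model $M \models \mathrm{ZFC}^-$ of size $\kappa$, closed under ${<}\kappa$-sequences, containing $\dot T$ and $\mathbb{P}$. Apply the embedding characterization of weak compactness to obtain an elementary $j \colon M \to N$ with $\mathrm{crit}(j)=\kappa$. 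Factor $j(\mathbb{P}) \cong \mathbb{P} * \dot{\mathbb{Q}}$ and build an $M[G]$-generic for $\mathbb{Q}$ using the sufficient closure of $\mathbb{Q}$ in $V[G]$, thereby lifting to $j \colon M[G] \to N[G*H]$. Any node of $j(T)$ at level $\kappa$ then determines a cofinal branch through $T$ in $V[G]$, contradicting the assumption that $T$ was Aronszajn.

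\textbf{Mitchell's direction.} Assuming no $\aleph_2$-Aronszajn tree exists in $V$, I would show that $\kappa := \aleph_2^V$ is weakly compact in $L$. The crucial input is Jensen's theorem that, inside $L$, every regular uncountable cardinal that is not weakly compact carries an Aronszajn tree, constructed via fine structure using the square principles $\square_\lambda$ for $\lambda<\kappa$. The tree produced is moreover special in $L$, in the sense that it admits an $L$-definable decomposition into countably many antichains (or carries a specializing function into the countable ordinals); this witness transfers to $V$ and prevents the introduction of any cofinal branch. Since all levels of $T$ have $L$-cardinality less than $\kappa$ and hence also $V$-cardinality less than $\aleph_2^V$, $T$ would remain an $\aleph_2$-Aronszajn tree in $V$, contradicting the hypothesis. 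Hence $L \models$ ``$\kappa$ is weakly compact''.

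\textbf{Main obstacle.} The forward direction is a fairly standard lifting argument; the only subtlety is the factorization of $j(\mathbb{P})$ and the construction of the upper generic, both of which are routine for the Lévy collapse. The genuine difficulty lies in Mitchell's direction, and specifically in Jensen's $L$-construction of an Aronszajn tree at a non-weakly-compact regular cardinal together with the verification that the tree remains Aronszajn in $V$. That step rests on the full fine-structural machinery of $L$ (condensation, $\square_\lambda$, and the specializing function it furnishes), which is considerably more delicate than anything used in Silver's half.
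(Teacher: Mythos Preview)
The paper does not prove this theorem; it is quoted as background with a citation to Mitchell's 1972 paper, so there is no ``paper's own proof'' to compare against. That said, your plan for the forward direction contains a genuine error.

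Forcing with the L\'evy collapse $\mathrm{Coll}(\omega_1,{<}\kappa)$ cannot produce a model with no $\aleph_2$-Aronszajn trees. Since $\kappa$ is inaccessible, $2^{\aleph_0}<\kappa$ in $V$; the collapse is countably closed and hence adds no reals, while every cardinal strictly between $\omega_1$ and $\kappa$ is collapsed to $\omega_1$. Thus $\mathsf{CH}$ holds in $V[G]$, and Specker's theorem (stated in this very paper, immediately above the Mitchell--Silver statement) then produces an $\aleph_2$-Aronszajn tree in $V[G]$. Concretely, your lifting argument breaks at the last step: the branch determined by a node of $j(T)$ at level $\kappa$ lives in $N[G*H]\subseteq V[G][H]$, and the tail forcing $\mathrm{Coll}(\omega_1,[\kappa,j(\kappa)))$ is only countably closed, which does not prevent it from adding new branches to $\omega_2$-trees. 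What is actually needed is Mitchell's forcing, which interleaves adding Cohen reals with collapsing so that $2^{\aleph_0}=\kappa=\omega_2$ in the extension; its quotient has the stronger absorption properties required to pull the branch back into $V[G]$.

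Your sketch of the lower bound is essentially on target (though the attributions are reversed: Mitchell supplied the forcing, Silver the inner-model direction via Jensen's work). One refinement: Jensen's construction in $L$ at a non-weakly-compact regular $\kappa$ yields a $\kappa$-Suslin tree, and the transfer to $V$ is usually argued via $\square_{\omega_1}$ (which holds in $V$ since $\omega_2^V$ is a successor in $L$ unless it is inaccessible there) giving a special $\aleph_2$-Aronszajn tree directly in $V$, rather than by pulling a specializing function out of $L$.
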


An \textit{antichain} in a tree is a set of pairwise incomparable elements of $T$. A $\kappa$-\textit{Suslin tree} is a $\kappa$-tree which has no chains or antichains of size $\kappa$. A \emph{Suslin tree} is an $\aleph_1$-Suslin tree.

A tree $T$ is \textit{normal} if and only if it satisfies the following conditions:

\begin{itemize}
\item It has a unique minimal element (called a \emph{root}),
\item for all $\alpha<\beta<\h(T)$ and all $x\in {T}_\alpha$ there is some $y\in{T}_\beta$ such that $x<y$,
\item for all $\alpha<\h(T)$ and $x\in {T}_\alpha$ there exist $y,z\in T$ such that $x<y$, $x<z$, and $y\perp z$.
\end{itemize}

\begin{lem}[folklore]\label{chaanc}
Let $T$ be a normal $\kappa$-tree. If $T$ is has no antichains of size $\kappa$, then $T$ is $\kappa$-Suslin. 
\end{lem}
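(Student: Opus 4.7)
The definition of $\kappa$-Suslin rules out both chains and antichains of size $\kappa$, and the latter is given by hypothesis. So it suffices to show that $T$ has no chain of size $\kappa$, and I would argue by contradiction, extracting a large antichain from any putative long chain. If $C \subset T$ is a chain with $|C| = \kappa$, then since $\kappa$ is regular and each level has fewer than $\kappa$ points, $\{\h(x) : x \in C\}$ must be unbounded in $\kappa$. Hence the downward closure $B = \{y \in T : y \le x \text{ for some } x \in C\}$ is a chain meeting every level below $\kappa$, i.e., a cofinal branch. For each $\alpha < \kappa$, let $x_\alpha$ denote its element at level $\alpha$.

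The plan is now to produce an antichain $\{w_\alpha : \alpha < \kappa\} \subset T \setminus B$ by recursion on $\alpha$. At stage $\alpha$, I'd let $\gamma_\alpha = \sup_{\xi < \alpha}\h(w_\xi)$, which lies below $\kappa$ by regularity, and apply the third clause of normality to $x_{\gamma_\alpha + 1}$ to obtain $y, z > x_{\gamma_\alpha + 1}$ with $y \perp z$. Since $B$ is a chain, at most one of $y, z$ lies in $B$, so I'd take $w_\alpha$ to be one that does not, noting that $\h(w_\alpha) > \gamma_\alpha + 1 > \h(w_\xi)$ for every $\xi < \alpha$. For pairwise incomparability, suppose $\alpha < \beta$. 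The strict increase of heights rules out $w_\beta \le w_\alpha$; if instead $w_\alpha \le w_\beta$, then $w_\alpha$ would equal the ancestor of $w_\beta$ at level $\h(w_\alpha) \le \gamma_\beta$. But $w_\beta > x_{\gamma_\beta + 1}$, so this ancestor lies below $x_{\gamma_\beta + 1}$ and hence in $B$, contradicting $w_\alpha \notin B$.

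The main delicacy is precisely this incomparability check: one must arrange that the path from $w_\beta$ down to height $\h(w_\alpha)$ first passes through an element of $B$. The ``$+1$'' buffer in the choice of $x_{\gamma_\alpha + 1}$ at each recursion stage is exactly what achieves this, and this is the only place where the third normality clause and the regularity of $\kappa$ really do work together.
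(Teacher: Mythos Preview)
Your proof is correct, and the overall strategy matches the paper's: assume a chain of size $\kappa$, pass to a cofinal branch $b$, and use normality to produce an antichain of size $\kappa$ consisting of points off $b$. The execution, however, is different. The paper's construction is a one-liner: for each $x \in b$ it picks some $y_x \in I(x) \setminus b$, and $\{y_x : x \in b\}$ is immediately an antichain, since $y_x < y_{x'}$ (with $x < x'$) would force $y_x$ to coincide with the level-$(\h(x)+1)$ predecessor of $x'$, which lies in $b$. Your recursive construction with height bookkeeping and the ``$+1$ buffer'' accomplishes the same end with more machinery; on the other hand, it uses only the literal third normality clause (existence of incomparable points \emph{somewhere} above $x$), whereas the paper's argument tacitly reads normality as giving $|I(x)| \ge 2$.
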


\begin{proof}
If $b$ is a branch through $T$ of length $\kappa$, use the normality of $T$ to pick, for each $x\in b$, some $y_x\in I(x)$ such that $y_x\not\in b$. Then $A=\{y_x:x\in b\}$ is an antichain and $|A|=\kappa$.
\end{proof}

In view of Lemma \ref{chaanc}, to check whether a given normal $\kappa$-tree is Suslin, one ``only" has to argue that all of its antichains have size less than $\kappa$. We shall make use of this fact without any further mention.

We also recall that an $\aleph_1$-tree is \emph{special} if and only if it can be written as a countable union of antichains. Equivalently, $T$ is special if and only if there is an order preserving map $T\to\Q$, see \cite[Lemma III.5.17]{kunenSetTheory2011}.

Let $(T,<)$ be a tree. If $X\subset T$, we let $\uarrow X:=\{y\in T:\exists x\in X(x\le y)\}$. If $X=\{x\}$, we write $\uarrow x$ instead of $\uarrow\{x\}$. The symbols $\darrow X$ and $\darrow x$ are defined analogously.

\section{The fine wedge topology}

If $T$ is a tree, the \textit{fine wedge topology} on $T$ is generated by the sets $\uarrow t$ and their complements, where $t\in T$.

Note that, if $x<y$, then $(\uarrow x) \setminus \uarrow y$ and $\uarrow y$ are disjoint open neighbourhoods of $x$ and $y$, respectively. If $x\perp y$, then $\uarrow x$ and $\uarrow y$ are disjoint open neighbourhoods of $x$ and $y$. We have thus shown that the topology is Hausdorff.

All topological notions below refer to the fine wedge topology.

If $T$ is \emph{finitely splitting at $x$} (that is $|I(x)|<\aleph_0$), then the identity
\[
\{x\}=(\uarrow x)\cap \bigcap_{y\in I(x)}(\uarrow y)^c
\]
shows that $x$ is isolated. Therefore, if $T$ is finitely splitting, the fine wedge topology is just the discrete topology on $T$. The interplay between finite and infinitely splitting trees will play a key role in our work, see Theorem \ref{subtree}.

Recall that, if $X$ is a topological space and $x\in X$, we say that a collection of open sets $\mathcal B$ is a \emph{local base at $x$} if and only if for every open set $U$ with $x\in U$ there is some $B\in \mathcal B$ with $x\in B\subset U$. We define the \emph{character of $x$} to be the cardinal $\chi(x,X):=\min\{|\mathcal B|:\mathcal{B} \text{ is a local base at }x\}$.

\begin{lem}\label{base}
Let $T$ be a tree. Given $x\in T$, the sets
\[
(\uarrow x)\setminus \uarrow F,
\]
where $F\in [I(x)]^{<\w}$, form a local base at $x$, and so $\chi(x,T)=|I(x)|$. In particular, if every node has $\aleph_0$ many immediate successors, then the fine-wedge topology is first countable.
\end{lem}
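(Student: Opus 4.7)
My approach has three steps. First, I would verify that each $(\uarrow x)\setminus \uarrow F$ is a basic open set containing $x$: it equals $\uarrow x \cap \bigcap_{y\in F}(\uarrow y)^c$, a finite intersection of subbasic sets, and no $y\in I(x)$ lies $\le x$, so $x$ belongs to it.

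Second, I would prove the local-base property. Since every open neighbourhood of $x$ contains a basic one, it suffices to handle a generic basic neighbourhood $B=\bigcap_{i<n}\uarrow t_i \cap \bigcap_{j<m}(\uarrow s_j)^c$ of $x$; the containment $x\in B$ forces $t_i\le x$ (so $\uarrow x\subseteq\uarrow t_i$) and $s_j\not\le x$. The key tree-theoretic step is to associate to each $s_j$ a single element of $I(x)$ encoding the obstruction: if $s_j\perp x$, then $\uarrow s_j\cap \uarrow x=\emptyset$ automatically (a common upper bound would force $s_j\parallel x$), while if $s_j>x$, the set $\{z:x<z\le s_j\}$ is well-ordered and non-empty, so its minimum $y_j$ is necessarily an immediate successor of $x$ with $\uarrow s_j\subseteq \uarrow y_j$. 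Setting $F:=\{y_j:s_j>x\}\in[I(x)]^{<\w}$ then yields $(\uarrow x)\setminus \uarrow F\subseteq B$.

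Third, for the character equality (in the infinite case), the upper bound $\chi(x,T)\le |I(x)|$ is immediate from the base just produced, which has cardinality $|[I(x)]^{<\w}|=|I(x)|$. For the lower bound I would diagonalize: any local base $\mathcal{B}$ at $x$ of size $<|I(x)|$ would, via step two, yield finite sets $F_B\subseteq I(x)$ for $B\in\mathcal{B}$ whose union $E$ still has size $<|I(x)|$; picking $y\in I(x)\setminus E$, the neighbourhood $(\uarrow x)\setminus\uarrow y$ must contain some $B\in\mathcal{B}$, but $y\in (\uarrow x)\setminus\uarrow F_B\subseteq B$, since distinct immediate successors of $x$ are pairwise incomparable (so no element of $F_B$ is $\le y$), contradicting $y\in\uarrow y$. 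The only nontrivial input is the canonical choice of $y_j\in I(x)$ below each $s_j$ together with the incomparability of distinct elements of $I(x)$, both following from the well-ordering of predecessors in a tree; I do not foresee any substantial obstacles.
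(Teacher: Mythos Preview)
Your argument is correct and follows the same route as the paper: take a basic neighbourhood $\bigcap_i\uarrow t_i\setminus\bigcup_j\uarrow s_j$ of $x$, discard those $s_j$ incomparable with $x$, and replace each remaining $s_j>x$ by the unique immediate successor of $x$ below it. The paper's proof stops there, treating the equality $\chi(x,T)=|I(x)|$ as an immediate consequence; your third step actually supplies the lower bound via a clean diagonalization, so your write-up is slightly more complete than the paper's own.
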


\begin{proof}
Let $U$ be a basic open neighbourhood of $x$, say
\[
x\in U=\bigcap_{i<n}\uarrow x_i\setminus \bigcup_{j<m}\uarrow y_j
\]
for some $x_i,y_j\in T$, $n,m\in\w$. Let $J=\{j<m:x< y_j\}$. For each $j\in J$, let $z_j\in I(x)$ be the unique point with $z_j\le y_j$. Then
\[
x\in (\uarrow x)\setminus \bigcup_{j\in J}\uarrow z_j\subset U,
\]
which completes the proof.
\end{proof}  

\begin{lem}
Suppose $\kappa$ is a regular cardinal. Let $(T,<)$ be a $\kappa$-tree which is $\kappa$-compact. Then $T$ is $\kappa$-Aronszajn.
\end{lem}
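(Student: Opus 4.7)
The plan is to contrapose: assuming $T$ has a cofinal branch $b$, I will exhibit an open cover of $T$ with no subcover of size less than $\kappa$, contradicting $\kappa$-compactness. For each $\alpha<\kappa$, let $x_\alpha$ denote the unique element of $b$ at level $\alpha$ (which exists because $b$ is cofinal and $\h(T)=\kappa$), and define
\[
U_\alpha := T\setminus \uarrow x_\alpha.
\]
Since $\uarrow x_\alpha$ is in the defining subbase of the fine wedge topology, each $U_\alpha$ is open.

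Next I would verify that $\{U_\alpha:\alpha<\kappa\}$ covers $T$. Given $y\in T$, choose any $\alpha$ with $\alpha>\h(y)$; then $x_\alpha$ lies at a strictly higher level than $y$, so $x_\alpha\not\le y$, whence $y\in U_\alpha$.

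Now I claim no subfamily of size $<\kappa$ covers $T$. Suppose $\{U_{\alpha_i}:i<\lambda\}$ is such a subfamily, with $\lambda<\kappa$. By the regularity of $\kappa$, the ordinal $\gamma:=\sup_{i<\lambda}\alpha_i$ satisfies $\gamma<\kappa$, so $x_\gamma\in b$ is defined. For every $i<\lambda$, both $x_{\alpha_i}$ and $x_\gamma$ lie in the chain $b$ and $\h(x_{\alpha_i})=\alpha_i\le\gamma=\h(x_\gamma)$, forcing $x_{\alpha_i}\le x_\gamma$; hence $x_\gamma\in\uarrow x_{\alpha_i}$, i.e.\ $x_\gamma\notin U_{\alpha_i}$. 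Thus $x_\gamma$ is not covered, contradicting our assumption.

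There is no real obstacle here: the only subtlety is ensuring the supremum $\gamma$ remains below $\kappa$, which is exactly the regularity hypothesis, and ensuring $x_\gamma$ actually exists, which follows since $\gamma<\kappa=\h(T)$ and $b$ is cofinal. The argument closely mirrors the proof that a compact linear order cannot contain an uncountable increasing sequence of open rays, transplanted to the tree setting via the fine wedge subbase.
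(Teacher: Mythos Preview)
Your proof is correct and takes essentially the same approach as the paper: the paper's one-line proof considers the open cover $\{(\uarrow x)^c : x\in b\}$ for a cofinal branch $b$ and asserts it has no subcover of size $<\kappa$ by regularity, which is exactly your cover $\{U_\alpha:\alpha<\kappa\}$ re-indexed by levels. You have simply filled in the details the paper leaves implicit.
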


\begin{proof}
Suppose that $b$ is a cofinal branch through $T$. Then $\{(\uarrow x)^c:x\in b\}$ has no subcover of size $<\kappa$ by regularity.
\end{proof}

\begin{lem}
Let $(T,<)$ be a $\kappa$-Aronszajn tree. Then every cover of $T$ by subbasic open sets has a subcover of size $<\kappa$.
\end{lem}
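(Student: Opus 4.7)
My plan is to analyze a given subbasic open cover $\U$ of $T$ by splitting it into the ``positive'' part $\{\uarrow t : t \in A\}$ and the ``negative'' part $\{(\uarrow t)^c : t \in B\}$, for suitable $A,B\subset T$. The first reduction is an easy observation: if $B$ contains two incomparable elements $s\perp t$, then no element of $T$ lies above both, so $(\uarrow s)^c\cup(\uarrow t)^c=T$ and a size-$2$ subcover exists. So I may assume $B$ is a chain. Since $T$ is $\kappa$-Aronszajn and $\kappa$ is regular, any chain has order type less than $\kappa$ (otherwise its downward closure would meet every level of $T$ and extend to a cofinal branch).

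I will then split into three subcases according to the shape of $B$. If $B=\emptyset$, then covering a root $r\in T_0$ forces some $s\in A$ with $s\le r$, hence $s=r$; so $\{\uarrow r:r\in T_0\}\subset\U$ is a subcover of size $|T_0|<\kappa$. If $B$ has a maximum $t_0$, then $t_0$ itself cannot lie in any $(\uarrow t)^c$ with $t\in B$ (since $t\le t_0$), so some $s\in A$ satisfies $s\le t_0$, and then $\{\uarrow s,(\uarrow t_0)^c\}$ covers $T$ because $\uarrow s\supset\uarrow t_0$.

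The interesting case is when $B$ is a chain of limit order type. Set $\alpha:=\sup\{\h(t):t\in B\}$, which lies below $\kappa$ by regularity, and $D:=\bigcap_{t\in B}\uarrow t$. By monotonicity of $t\mapsto(\uarrow t)^c$ along the chain $B$, any cofinal subsequence of $B$ of length $\cf(\ot B)<\kappa$ yields a subfamily of negative sets whose union is exactly $T\setminus D$. To cover $D$, I will verify the tree-theoretic identity $D=\bigcup_{y\in D\cap T_\alpha}\uarrow y$: given $x\in D$ of height $>\alpha$, the unique $y\in T_\alpha$ below $x$ is itself above every $t\in B$ (comparability inside the chain $\darrow x$ together with heights forces $t<y$), hence $y\in D$. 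Finally, each such $y\in D\cap T_\alpha$ cannot be covered by any negative set, so there is $s_y\in A$ with $s_y\le y$, and $\uarrow s_y\supset\uarrow y$; this gives a subcover of $D$ of size at most $|T_\alpha|<\kappa$.

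The delicate point I expect to be the main obstacle is the last subcase: one must realize that the ``top'' piece $D$ is entirely controlled by its slice $D\cap T_\alpha$, and combine this with the cofinal-in-$B$ reduction for $T\setminus D$. The regularity of $\kappa$ is used in three places—bounding $\alpha$, $|B|$ and $\cf(\ot B)$ below $\kappa$—and ensures that the total subcover, being a union of $<\kappa$ many pieces each of size $<\kappa$, has size $<\kappa$.
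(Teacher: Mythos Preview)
Your proof is correct and follows essentially the same route as the paper: split the subbasic cover into cones indexed by $A$ and complements of cones indexed by $B$, reduce to the case where $B$ is a chain (hence of size $<\kappa$), set $D=\bigcap_{t\in B}\uarrow t$, cover $T\setminus D$ by the negative sets, and cover $D$ by finding for each point on the bottom level of $D$ a positive cone from $\U$. The paper handles all three of your subcases uniformly by letting $\alpha$ be the least height of a point of $X=D$ (rather than $\sup\{\h(t):t\in B\}$), which spares the case split; your extraction of a cofinal subsequence of $B$ is also unnecessary since $|B|<\kappa$ already, but none of this affects correctness.
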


\begin{proof}
Let $\mathcal U$ be a cover of $T$ by subbasic open sets. Observe that, if all nodes at some level of the tree belong to a cone from $\mathcal U$, then these cones give a subcover of size $<\kappa$ of the tree above that level. But there are less than $\kappa$ many nodes below that level, so we're done. The idea is essentially to show that such a ``good" level must exist.

Consider the sets
\[
A=\{t\in T:\uarrow t\in \mathcal{U}\}
\]
and
\[
B=\{t\in T:(\uarrow t)^c\in\mathcal{U}\},
\]
where ${}^c$ denotes complementation with respect to $T$. Suppose there are $s,t\in B$ such that $s\perp t$. Then $(\uarrow s)\cap (\uarrow t)=\emptyset$, so $(\uarrow s)^c\cup (\uarrow t)^c=T$ and we've found a finite subcover. Therefore, we may assume that $B$ is linearly ordered, hence a branch. Put
\[
X=\bigcap_{x\in B}\uarrow x
\]
and observe that
\[
T=X\cup\bigcup_{x\in B}(\uarrow x)^c.
\]
Since the tree is $\kappa$-Aronszajn, $|B|<\kappa$, hence we only need to show that $X$ is covered by some subset of $\mathcal U$ of size less than $\kappa$. Let $\alpha$ be the least height of a member of $X$ (if $X$ is empty, there's nothing to do), and pick $y\in X\cap{T}_\alpha$. Since $\mathcal{U}$ covers $T$, there is some $U\in \mathcal U$ with $y\in U$. If $U=(\uarrow t)^c$ for some $t$, then $t\in B$, so $t<y$ because $y\in X$, contradicting that $y\in(\uarrow t)^c$. It follows that $y\in \uarrow t$ for some $t\in A$. This shows that ${T}_\alpha\cap X$ is one of the good levels described in the first paragraph of the proof, and we're done.
\end{proof}

We isolate the following elementary result from point-set topology:
\begin{lem}
Let $X$ be a topological space and $\kappa$ an infinite cardinal. Suppose we have a sequence $\seq{\mathcal{B}_x:x\in X}$ such that $\B_x$ is a local base at $x$ for every $x\in X$. Then $X$ is $\kappa$-compact if and only if for each $\Gamma\in\prod_{x\in X}\B_x$ there is some $Y\in [X]^{<\kappa}$ such that $X=\bigcup_{y\in Y} \Gamma(y)$. 
\end{lem}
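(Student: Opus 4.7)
My plan is to prove both directions directly from the definitions, since this is essentially just a refinement-of-covers argument.

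For the forward direction, assume $X$ is $\kappa$-compact and fix some $\Gamma\in\prod_{x\in X}\mathcal B_x$. Since $\mathcal B_x$ is a local base at $x$, in particular $x\in\Gamma(x)$ for every $x\in X$. Therefore $\{\Gamma(x):x\in X\}$ is an open cover of $X$, and $\kappa$-compactness immediately yields $Y\in[X]^{<\kappa}$ with $X=\bigcup_{y\in Y}\Gamma(y)$.

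For the backward direction, fix an arbitrary open cover $\mathcal U$ of $X$. For each $x\in X$, choose some $U_x\in\mathcal U$ with $x\in U_x$, and then, using that $\mathcal B_x$ is a local base at $x$, choose $\Gamma(x)\in\mathcal B_x$ with $x\in\Gamma(x)\subset U_x$. Apply the hypothesis to this choice of $\Gamma$ to obtain $Y\in[X]^{<\kappa}$ with $X=\bigcup_{y\in Y}\Gamma(y)$. Then $\{U_y:y\in Y\}$ is a subfamily of $\mathcal U$ of size $<\kappa$ covering $X$, as required.

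There is no real obstacle here; the result just packages the obvious observation that, when testing $\kappa$-compactness, one may restrict attention to covers in which each point's neighbourhood is drawn from a prescribed local base. The only mild subtlety is invoking the Axiom of Choice twice in the backward direction (once to pick $U_x$, once to pick $\Gamma(x)$), but this is harmless.
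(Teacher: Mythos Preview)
Your proof is correct and follows exactly the same approach as the paper's own argument; the paper even dismisses the forward direction as ``easy'' and gives precisely your refinement argument for the backward direction. There is nothing to add.
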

\begin{proof}
The forwards direction is easy. We prove the backwards implication. Let $\U$ be an open cover of $X$. Choose, for each $x\in X$, some $U_x\in\U$ such that $x\in U_x$. Given $x\in X$, we know that $\B_x$ is an open base at $x$, hence we can find $\Gamma(x)\in \B_x$ such that $x\in\Gamma(x)\subset U_x$. This defines $\Gamma\in\prod_{x\in X}\B_x$. By assumption, there is some $Y\in [X]^{<\kappa}$ such that $X=\bigcup_{y\in Y}\Gamma(y)$. But then $X=\bigcup_{y\in Y}U_y$ and $|\{U_y:y\in Y\}|<\kappa$.
\end{proof}

In the tree context, we'll be looking at the system of local bases formed by the sets $\uarrow x\setminus\uarrow f(x)$, where $f\in \prod_{x\in T}[I(x)]^{<\w}$. We shall say that such an $f$ \emph{codes} the cover $\U_f:=\{\uarrow x\setminus \uarrow f(x):x\in T\}$. Going forward, we will blur the distinction between the function $f$ and the open cover $\U_f$, and speak simply of the \emph{cover} $f$. We will also consider $\U_f$ for $f\in\prod_{x\in X}[I(x)]^{<\w}$, where $X\subset T$ (of course, $\U_f$ might not cover $T$).

Note that covers of this kind have the following important property:

\begin{lem}\label{2.2}
Let $T$ be a tree and $f\in\prod_{x\in T}[I(x)]^{<\w}$. The following are equivalent:
\begin{enumerate}[font=\normalfont]
\item $f$ has a countable subcover.
\item There is a limit ordinal $\alpha<\w_1$ such that $f\restr (T\restr\alpha)$ covers $T$.
\item There is an ordinal $\alpha<\w_1$ such that for every $x\in {T}_\alpha$ there is some $y\in T\restr\alpha$ such that $x\in\uarrow y\setminus\uarrow f(y)$.
\end{enumerate}
\end{lem}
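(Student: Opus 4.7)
I would prove the equivalence cyclically, $(1)\Rightarrow(2)\Rightarrow(3)\Rightarrow(1)$. The first two implications are essentially bookkeeping, while the third does the real work and leans on the fact that $f(x)\subset I(x)$ consists of \emph{immediate} successors of $x$.

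For $(1)\Rightarrow(2)$, given a countable subcover $\{\uarrow x_n\setminus\uarrow f(x_n):n<\w\}$, fix any countable limit ordinal $\alpha$ above $\sup_n \h(x_n)$; then every $x_n$ lies in $T\restr\alpha$, so $\U_{f\restr(T\restr\alpha)}$ contains the given countable subcover and hence covers $T$. For $(2)\Rightarrow(3)$, specialize the covering condition to points in $T_\alpha$.

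For $(3)\Rightarrow(1)$, the engine is the following cone-propagation observation: if $y<x$ strictly and $x\in\uarrow y\setminus\uarrow f(y)$, then $\uarrow x\subset\uarrow y\setminus\uarrow f(y)$. To prove it, suppose toward a contradiction that $z\geq x$ and $z\in\uarrow w$ for some $w\in f(y)$. Then $w\leq z$ and $x\leq z$, so $w$ and $x$ are comparable in the tree. Since $w$ is an immediate successor of $y$ and $y<x$, we have $\h(w)=\h(y)+1\leq\h(x)$, ruling out $w>x$; hence $w\leq x$, and then $x\in\uarrow w\subset\uarrow f(y)$, contradicting $x\notin\uarrow f(y)$.

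Granting the observation, take $\alpha<\w_1$ from (3) and set $\mathcal C:=\{\uarrow z\setminus\uarrow f(z):z\in T\restr\alpha\}$, which is countable since the standing context is that of an $\aleph_1$-tree. Each $z\in T\restr\alpha$ lies in $\uarrow z\setminus\uarrow f(z)$ (because $f(z)\subset I(z)$ consists of strict successors of $z$); and for every $z$ with $\h(z)\geq\alpha$, the unique predecessor $x\in T_\alpha$ of $z$ (or $x=z$ if $\h(z)=\alpha$) is covered by some $\uarrow y\setminus\uarrow f(y)$ with $y\in T\restr\alpha$ by (3), and then cone propagation gives $z\in\uarrow x\subset\uarrow y\setminus\uarrow f(y)\in\mathcal C$. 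The only real obstacle is isolating the cone-propagation property; once stated, it follows immediately from the immediacy of the successors in $f(y)$.
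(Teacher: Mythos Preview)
Your argument is correct and is precisely the natural elaboration of what the paper leaves implicit: the paper's own proof consists of the single word ``Trivial.'' Your cycle $(1)\Rightarrow(2)\Rightarrow(3)\Rightarrow(1)$, with the cone-propagation observation doing the work in $(3)\Rightarrow(1)$, is exactly how one unpacks it, and you are right to flag that the countability of $T\restr\alpha$ (and the bound $\sup_n\h(x_n)<\w_1$) rely on the ambient $\aleph_1$-tree hypothesis that the lemma's statement leaves tacit.
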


\begin{proof}
Trivial.
\end{proof}

\begin{defi}
Let $T$ be an $\aleph_1$-tree and $f\in\prod_{x\in T} [I(x)]^{<\w}$. We say that a point $x\in T$ is \textit{safe} (for $f$) if and only if for all $y<x$, $x\in \uarrow f(y)$.
\end{defi}
An immediate consequence of the definition is:
\begin{lem}
Let $T$ be an $\aleph_1$-tree and $f\in\prod_{x\in T} [I(x)]^{<\w}$. If $x\in T$ is safe, then so is every $y<x$. Also, if $z\in I(x)$ (with $x$ safe), then $z$ is safe if and only if $z\in f(x)$.
\end{lem}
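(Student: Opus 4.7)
Both parts are straightforward unravelings of the definition of safety; the only nontrivial fact I will exploit is that $w\in I(y)$ forces $\h(w)=\h(y)+1$, so if $w$ and some other node $v$ are comparable with $w\le v$, the height of $w$ controls the possibilities.

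For the first assertion, fix $x$ safe and $y<x$. Pick any $z<y$; the goal is to produce $w\in f(z)$ with $w\le y$. Safety of $x$ gives $w\in f(z)\subset I(z)$ with $w\le x$. Since $y<x$ and $w\le x$, the nodes $w$ and $y$ both lie in the well-ordered chain $\darrow x$, so they are comparable. If $y<w$ held, then $\h(y)<\h(w)=\h(z)+1$, i.e.\ $\h(y)\le \h(z)$, contradicting $z<y$. Hence $w\le y$, and $y\in\uarrow f(z)$ as needed.

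For the second assertion, suppose $z\in I(x)$ with $x$ safe. If $z\in f(x)$, I show $z$ is safe. Take any $y<z$; the chain $\darrow z$ contains $x$, and because $z$ is an immediate successor of $x$ there is nothing strictly between $x$ and $z$, so either $y=x$ or $y<x$. In the first case $z\in f(x)\subset\uarrow f(x)=\uarrow f(y)$. In the second case safety of $x$ yields $w\in f(y)$ with $w\le x$, and then $w\le x<z$ gives $z\in\uarrow f(y)$. Conversely, if $z$ is safe, then applying safety at $y=x<z$ gives $w\in f(x)\subset I(x)$ with $w\le z$; but $\h(w)=\h(x)+1=\h(z)$ and $w\le z$ force $w=z$, so $z\in f(x)$.

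There is no real obstacle here beyond the height bookkeeping in the chain below $x$ (resp.\ $z$); I do not expect to invoke any property of $T$ beyond its being a tree.
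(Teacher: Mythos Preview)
Your proof is correct; the paper's own proof consists of the single word ``Trivial,'' and what you have written is precisely the routine unraveling of the definition that justifies that word. There is nothing to compare beyond the level of detail.
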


\begin{proof}
Trivial.
\end{proof}

The key property of safe points is the following:

\begin{lem}\label{safe}
Let $T$ be an $\aleph_1$-tree and $f\in\prod_{x\in T}[I(x)]^{<\w}$. The following are equivalent:
\begin{enumerate}[font=\normalfont]
\item $f$ has no countable subcover.
\item For every $\alpha<\w_1$ there is a safe point of height $\alpha$.
\item The set $\{\h(x):x\text{ is safe}\}$ is unbounded in $\w_1$.
\end{enumerate}
\end{lem}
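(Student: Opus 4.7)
The plan is to establish the cycle (1) $\iff$ (2) $\implies$ (3) $\implies$ (2). The cleanest approach goes through Lemma \ref{2.2}, which reduces countable subcovers of $f$ to the existence of some level $\alpha<\w_1$ witnessing coverage from below. I will match that to the negation of (2) level-by-level.

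For (1) $\iff$ (2), I would start by unpacking the definition: a node $x\in T_\alpha$ is \emph{not} safe exactly when there exists $y<x$ with $x\notin\uarrow f(y)$, i.e.\ $x\in\uarrow y\setminus\uarrow f(y)$. Since any $y\in T\restr\alpha$ with $x\in\uarrow y$ automatically satisfies $y<x$ (as $\h(y)<\alpha=\h(x)$ and $y\le x$), condition (3) of Lemma \ref{2.2} translates to: there exists $\alpha<\w_1$ such that every $x\in T_\alpha$ is \emph{not} safe. Applying Lemma \ref{2.2} and taking contrapositives, $f$ has no countable subcover if and only if for every $\alpha<\w_1$ some $x\in T_\alpha$ is safe, which is (2).

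The implication (2) $\implies$ (3) is immediate, as (2) supplies a safe point at each countable level. For the converse (3) $\implies$ (2), given $\alpha<\w_1$ use the unboundedness of safe heights to pick a safe $x$ with $\h(x)\ge\alpha$; let $y$ be the unique predecessor of $x$ at height $\alpha$ (taking $y=x$ if $\h(x)=\alpha$). By the preceding lemma (safety is downward closed), $y$ is safe, and it lives at height $\alpha$ as required.

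No step here is really the main obstacle — the content of the lemma is essentially bookkeeping on top of Lemma \ref{2.2}. The only subtlety worth being careful about is the translation from ``covered by some $\uarrow y\setminus\uarrow f(y)$ with $y\in T\restr\alpha$'' to ``not safe'', which is where one checks that the range of $y$'s in Lemma \ref{2.2}(3) is exactly the set of strict predecessors of $x$.
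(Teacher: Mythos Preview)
Your proof is correct and follows essentially the same approach as the paper. The only cosmetic difference is that the paper closes the cycle via $(3)\Rightarrow(1)$ directly (showing a safe point above level $\gamma$ is missed by $f\restr(T\restr\gamma)$), whereas you go $(3)\Rightarrow(2)$ using downward closure of safety; both routes amount to the same observation.
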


\begin{proof}
$(1)\Rightarrow (2)$: Fix $\alpha<\w_1$. By \ref{2.2}, there is some $x\in{T}_\alpha$ such that for all $y\in T\restr\alpha$, $x\not\in \uarrow y\setminus \uarrow f(y)$. In particular, if $y<x$, $x\in \uarrow f(y)$, so $x$ is safe.

$(2)\Rightarrow (3)$: Trivial.

$(3)\Rightarrow (1)$: Suppose towards a contradiction that $f$ has a countable subcover. By Lemma \ref{2.2}, there is some limit $\gamma<\w_1$ such that $f\restr (T\restr\gamma)$ covers $T$. Choose a safe point $x$ with $\gamma<\h(x)$. Let $y\in T\restr\gamma$. If $y\not< x$, then obviously $x\not \in\uarrow y\setminus \uarrow f(y)$. If $y<x$, the safety of $x$ implies that $x\in \uarrow f(y)$, so $x\not\in \uarrow y\setminus \uarrow f(y)$. In either case, $x\not\in\bigcup\U_{f\restr (T\restr\gamma)}=T$, contradiction.
\end{proof}

Recall that a subtree of a tree is a downwards closed subset. Note that, if $S$ is a subtree of $T$ and $\alpha<\h(T)$, then $S_\alpha=S\cap T_\alpha$. Also, if $x\in S$, then $I_S(x)=I_T(x)\cap S$.

\begin{thm}\label{subtree}
Let $T$ be an $\aleph_1$-tree. Then $T$ is Lindelöf if and only if every finitely splitting subtree of $T$ is countable.
\end{thm}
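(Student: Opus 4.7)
The plan is to prove both directions by contraposition, exploiting the machinery built up in Lemmas \ref{2.2} and \ref{safe} together with the ``local base'' lemma just above, which reduces the problem of covering $T$ to covers of the form $\U_f$ with $f\in\prod_{x\in T}[I(x)]^{<\w}$. The main idea is that safety has been engineered precisely so that the set of safe points is a finitely splitting subtree of $T$, and Lemma \ref{safe} says this subtree is uncountable exactly when $f$ has no countable subcover.

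For the ``if'' direction, I would prove the contrapositive: assume $T$ is not Lindelöf. By the local base lemma I may fix an $f\in\prod_{x\in T}[I(x)]^{<\w}$ such that $\U_f$ has no countable subcover, and let $S$ be the set of safe points. The earlier observation that safety is downward closed shows $S$ is a subtree. For $x\in S$ the companion observation says that $z\in I_T(x)$ is safe iff $z\in f(x)$, so $I_S(x)=f(x)$ is finite. Hence $S$ is a finitely splitting subtree of $T$. By Lemma \ref{safe}, the heights of safe points are unbounded in $\w_1$, and since each level of $S$ is contained in the corresponding (countable) level of $T$, $S$ must be uncountable.

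For the ``only if'' direction, I would again argue contrapositively: suppose $S$ is an uncountable finitely splitting subtree. Define $f\in\prod_{x\in T}[I(x)]^{<\w}$ by $f(x)=I_T(x)\cap S$ for $x\in S$ (finite by hypothesis) and $f(x)=\emptyset$ otherwise. I claim every $x\in S$ is safe: given $y<x$, since $S$ is downward closed we have $y\in S$, and the unique $z\in I_T(y)$ with $z\le x$ lies in $S\cap I_T(y)=f(y)$, so $x\in \uarrow f(y)$. As $S$ is uncountable and each level is countable, the heights of elements of $S$ (hence of safe points) are unbounded in $\w_1$, so by Lemma \ref{safe}, $\U_f$ has no countable subcover.

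I do not foresee a real obstacle: once the two routine checks (safety is preserved by the relation between $f(x)$ and $I_S(x)$ in each direction) are performed, Lemma \ref{safe} does all the work. The only mild point of care is verifying that uncountability of $S$ gives safe points of arbitrarily large height, which uses that $T$ has countable levels so $S$ cannot be concentrated on a countable initial segment.
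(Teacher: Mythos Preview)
Your proposal is correct and follows essentially the same approach as the paper: in one direction you take the set of safe points for a bad $f$ and observe it is a finitely splitting subtree of uncountable height, and in the other you define $f(x)=I_T(x)\cap S$ and verify that every point of $S$ is safe, then invoke Lemma~\ref{safe}. The only cosmetic differences are that the paper defines $f(x)=I_T(x)\cap S$ uniformly for all $x\in T$ (which agrees with your definition, since $I_T(x)\cap S=\emptyset$ when $x\notin S$) and appeals to clause~(2) of Lemma~\ref{safe} rather than clause~(3).
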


\begin{proof}
$\Rightarrow)$ Suppose $S\subset T$ is a finitely splitting subtree of $T$ with $\h(S)=\aleph_1$. Define $f(x)=I_T(x)\cap S$, where $x\in T$. We claim that the cover coded by $f$ has no countable subcover. To see this, fix a limit ordinal $\alpha<\w_1$. Choose $x\in S\cap T_\alpha$ and $y<x$ (in $T$). Since $S$ is a subtree, $y\in S$. Let $z$ be the unique element of $I_T(y)\cap \darrow x$. As $z\le x\in S$, we see that $z\in S$, hence $z\in f(y)$ and so $x\in\uarrow f(y)$. This shows that $x$ is safe. There is therefore a safe point at every limit level, so we're done by Lemma \ref{safe}.

$\Leftarrow$) Let $f$ code a cover with no countable subcover. Let $S$ be the set of points of $S$ which are safe for $f$. It is clear that $S$ is a subtree of $T$. Given $x\in S$, we see that $I_{S}(x)=f(x)$, so $S$ is finitely splitting. Since $f$ has no countable subcover, $\h(S)=\aleph_1$.
\end{proof}

We end this section by showing that a Suslin tree is automatically Lindelöf. To do this, we need to recall some elementary facts on forcing with Suslin trees. Given a tree $T$, let ${\mathbb{P}}_T$ be $T$ upside down. More precisely, the underlying set of ${\mathbb{P}}_T$ is (the underlying set of) $T$ and the order on ${\mathbb{P}}_T$ is the dual order on $T$.

\begin{lem}\label{Sf}
Let $T$ be a Suslin tree. Then ${\mathbb{P}}_T$ is ccc and, for every dense open set $D\subset {\mathbb{P}}_T$ there is some $\alpha<\w_1$ such that $\{x\in T:\h(x)\ge \alpha\}\subset D$. In particular, ${\mathbb{P}}_T$ is \emph{countably distributive} (every countable intersection of dense open sets is dense).
\end{lem}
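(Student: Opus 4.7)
The plan is to unravel the forcing-theoretic statements into purely tree-theoretic ones and then apply Suslinness. First I observe that $p\le_{\mathbb{P}_T}q$ is by definition $q\le_T p$, so two conditions are $\mathbb{P}_T$-incompatible exactly when they have no common $T$-upper bound, which in a tree is the same as being $T$-incomparable. Antichains in $\mathbb{P}_T$ therefore coincide with antichains in $T$, so ccc is immediate from Suslinness.

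For the dense-open claim, I would fix $D\subset\mathbb{P}_T$ dense open and rewrite the hypotheses as: every $x\in T$ has some $y\in D$ with $x\le_T y$ (density), and $D$ is upwards closed in $T$ (openness). Let $A$ be the set of $<_T$-minimal elements of $D$. Since $\{y:y\le_T x\}$ is well-ordered, every element of $D$ lies $\ge_T$ some element of $A$; and minimality prevents any two elements of $A$ from being $<_T$-comparable, so $A$ is an antichain in $T$ and hence countable. Set $\alpha:=\sup\{\h(a)+1:a\in A\}<\w_1$. To prove $\{x\in T:\h(x)\ge\alpha\}\subset D$, given such an $x$ I would use density to pick $q\ge_T x$ in $D$, then pick $a\in A$ with $a\le_T q$; since $a$ and $x$ are both $\le_T q$ they are $T$-comparable, and $\h(a)<\alpha\le\h(x)$ forces $a<_T x$, so upward closure yields $x\in D$.

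The ``in particular'' clause then follows by taking thresholds $\alpha_n$ for each $D_n$ and letting $\alpha:=\sup_n\alpha_n<\w_1$, so that $\{x:\h(x)\ge\alpha\}\subset\bigcap_n D_n$; this set is dense in $\mathbb{P}_T$ since in a Suslin $\aleph_1$-tree (which we may assume normal, or restrict to its subtree of extendable nodes) every node has successors at arbitrarily large countable levels. The main ``obstacle'' is really just bookkeeping: keeping the direction of the order straight when translating between $\mathbb{P}_T$ and $T$, and verifying via the well-ordering of initial segments that every element of $D$ really does sit above a minimal one.
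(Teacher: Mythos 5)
Your proof is correct and follows essentially the same route as the paper: both arguments rest on identifying incompatibility in $\mathbb{P}_T$ with incomparability in $T$, and on extracting from $D$ a countable antichain bounded below some level $\alpha$ (you take the $<_T$-minimal elements of $D$, the paper takes a maximal antichain contained in $D$ --- these amount to the same thing), after which openness of $D$ finishes the argument. Your parenthetical caveat in the ``in particular'' clause --- that density of $\{x\in T:\h(x)\ge\alpha\}$ requires every node to have successors at arbitrarily high levels, i.e.\ a normality-type hypothesis --- flags a point the paper's proof passes over in silence, so it is worth keeping.
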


\begin{proof}
The poset ${\mathbb{P}}_T$ being ccc is just a restatement of $T$ having no uncountable antichains. To prove the remaining statements, let $D\subset {\mathbb{P}}_T$ be dense and open. Let $A\subset D$ be a maximal antichain, which must therefore be countable. Fix $\alpha<\w_1$ such that $A\subset T\restr\alpha$. If $x\in T\restr [\alpha,\w_1)$, there is some $a\in A$ with $a\parallel x$, hence $a<x$. Since $D$ is open, $x\in D$.
\end{proof}

\begin{lem}\label{gen}
Let $T$ be a Suslin tree in the universe $V$. Let $W$ be an outer model of $V$. Suppose that $b\in W$ is a cofinal branch through $T$. Then $b$ is ${\mathbb{P}}_T$-generic over $V$.
\end{lem}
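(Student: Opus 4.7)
The plan is to verify the two defining properties of a generic filter: that $b$ is a filter on $\mathbb{P}_T$, and that $b$ meets every dense open subset of $\mathbb{P}_T$ lying in $V$.

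First I would check the filter conditions. A cofinal branch is a maximal chain in $T$, and maximal chains are downwards closed (if $x \in b$ and $y <_T x$, then $b \cup \{y\}$ remains a chain, so by maximality $y \in b$). Translating to $\mathbb{P}_T$, this says $b$ is upwards closed. Linearity of $b$ in $T$ means that for any $x,y \in b$ one of them lies below the other in $T$, which is to say that one extends the other in $\mathbb{P}_T$; hence $b$ is directed. So $b$ is a filter in $\mathbb{P}_T$.

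Next, for genericity, let $D \in V$ be a dense open subset of $\mathbb{P}_T$. Applying Lemma \ref{Sf} inside $V$ yields some $\alpha < \omega_1^V$ such that every node of $T$ of height $\geq \alpha$ belongs to $D$. Since $T$ is an $\aleph_1$-tree in $V$, it has height $\omega_1^V$, and $b$ is cofinal, so $b \cap T_\alpha \neq \emptyset$. Any such node witnesses $b \cap D \neq \emptyset$.

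The only subtlety worth flagging is that $\omega_1$ may be collapsed in $W$, but this does no harm: the ordinal $\alpha$ produced by Lemma \ref{Sf} is a genuine countable ordinal of $V$, strictly less than the height of $T$, so cofinality of $b$ in $T$ (which is a statement about levels indexed by ordinals $< \omega_1^V$) is exactly what is needed. Thus there is no real obstacle; the lemma is a direct corollary of the countable distributivity established in Lemma \ref{Sf}.
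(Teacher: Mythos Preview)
Your proof is correct and follows essentially the same approach as the paper: invoke Lemma \ref{Sf} to obtain $\alpha<\omega_1^V$ with $T\restr[\alpha,\omega_1)\subset D$, then use cofinality of $b$ to find a point of $b$ at height $\ge\alpha$. The paper's proof omits the filter verification and the remark about $\omega_1^W$ versus $\omega_1^V$, but your additions are correct and do not change the argument.
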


\begin{proof}
Let $D\in V$ be dense and open in ${\mathbb{P}}_T$. Choose $\alpha<\w_1^V$ with $T\restr [\alpha,\w_1)\subset D$. Then any point in $b$ of height at least $\alpha$ must belong to $D$.
\end{proof}

\begin{thm}\label{fbS}
Let $T$ be an infinitely splitting Suslin tree. Then every finitely splitting subtree of $T$ is countable.
\end{thm}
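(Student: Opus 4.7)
My plan is to argue by contradiction, using the preparatory Lemma \ref{Sf} on the poset $\mathbb{P}_T$. Suppose $S$ is a finitely splitting subtree of $T$ with $\h(S)=\aleph_1$. I will show that $D:=T\setminus S$ is a dense open subset of $\mathbb{P}_T$, and then invoke Lemma \ref{Sf} to derive a contradiction.

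For openness in $\mathbb{P}_T$: if $x\in D$ and $y\ge_T x$, then $y\notin S$, since otherwise the downward closure of $S$ would force $x\in S$. For density, fix any $p\in\mathbb{P}_T$. If $p\notin S$, we are done. If $p\in S$, the hypothesis that $T$ is infinitely splitting gives $|I_T(p)|\ge\aleph_0$, while $|I_S(p)|=|I_T(p)\cap S|<\aleph_0$ because $S$ is finitely splitting, so we can pick $q\in I_T(p)\setminus S$, which is an extension of $p$ in $\mathbb{P}_T$ lying in $D$.

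Now Lemma \ref{Sf} supplies some $\alpha<\w_1$ with $\{x\in T:\h(x)\ge\alpha\}\subset D$, equivalently $S\subset T\restr\alpha$. Since $T$ is an $\aleph_1$-tree and $\alpha$ is countable, $T\restr\alpha$ is a countable union of countable levels, hence countable. This contradicts $\h(S)=\aleph_1$ and completes the proof.

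There is no real obstacle: all the work has been packaged into Lemma \ref{Sf} (countable distributivity plus the observation that dense open sets in the Suslin forcing contain a final segment of the tree). The only point requiring care is verifying that $T\setminus S$ really is both dense and open in $\mathbb{P}_T$, and here the assumption that $T$ is infinitely splitting is used in an essential way (without it, $I_T(p)\setminus S$ might be empty for $p\in S$, and the conclusion of the theorem would in fact fail, e.g. when $T=S$ is already finitely splitting).
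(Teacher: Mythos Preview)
Your proof is correct. Both your argument and the paper's hinge on the same combinatorial fact---that $T\setminus S$ is dense open in $\mathbb{P}_T$---but you exploit it differently. The paper forces with $\mathbb{P}_S$: since $S$ is itself Suslin, a $\mathbb{P}_S$-generic $G$ is a cofinal branch through $S$, and by Lemma~\ref{gen} this branch is also $\mathbb{P}_T$-generic; genericity against the dense open set $T\setminus S$ then forces $G$ to leave $S$, contradicting $G\subset S$. You instead stay entirely in the ground model and apply Lemma~\ref{Sf} directly to $T\setminus S$, concluding that $S$ is bounded below some level $\alpha$. Your route is more elementary---it avoids generic filters and Lemma~\ref{gen} altogether---while the paper's argument makes the role of the branch explicit and perhaps generalizes more readily to situations where one wants to analyse what happens to $S$ under forcing.
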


\begin{proof}
Suppose that $S$ is a finitely splitting subtree of $T$ with height $\aleph_1$. Since $T$ is Suslin, so is $S$. Let $G$ be ${\mathbb{P}}(S)$-generic over $V$. By \ref{gen}, $G$ is ${\mathbb{P}}_T$-generic over $V$. But an easy density argument shows that every ${\mathbb{P}}_T$-generic branch is disjoint from $S$ above some node of $T$, because $S$ is finitely splitting and $T$ is not. This is a contradiction.
\end{proof}

\begin{cor}\label{S->L}
Every Suslin tree is Lindelöf.
\end{cor}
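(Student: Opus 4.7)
The plan is to combine the two preceding results, Theorem~\ref{subtree} and Theorem~\ref{fbS}. Let $T$ be a Suslin tree. By Theorem~\ref{subtree}, $T$ is Lindelöf if and only if every finitely splitting subtree of $T$ is countable. Theorem~\ref{fbS} provides precisely this conclusion under the assumption that $T$ is infinitely splitting, so the corollary essentially asks us to assemble these two facts.

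The only potential gap is that \ref{fbS} requires $T$ to be infinitely splitting, while the corollary drops this hypothesis. To handle this, I would argue as follows. Suppose $S\subset T$ is a finitely splitting subtree with $|S|=\aleph_1$, so $\h(S)=\aleph_1$. Then $S$ is itself an $\aleph_1$-tree (each level is contained in a countable level of $T$), and since any chain or antichain of $S$ is a chain or antichain of $T$, the tree $S$ inherits the Suslin property. Thus $S$ is a finitely splitting $\aleph_1$-Suslin tree in its own right, and it suffices to derive a contradiction from the existence of such an object.

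This is done by a König-style compactness argument. Consider the subtree $S^*=\{x\in S:|S\cap\uarrow x|=\aleph_1\}$ of ``tall'' nodes. Since $S$ is uncountable, its root is in $S^*$; and since $S$ is finitely splitting, if $x\in S^*$ then among the finitely many elements of $I_S(x)$ at least one must again have an uncountable cone, so every tall node has a tall immediate successor. Iterating to build a cofinal branch through $S^*$ would produce an $\aleph_1$-chain in $S$, contradicting the Suslin property.

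The main obstacle is the limit step in this recursion: a countable chain through $S^*$ need not extend to a node at the limit level. The cleanest way around this, staying in the spirit of \ref{fbS}, is to bypass the explicit construction and instead force with $\mathbb{P}_S$. Since $S$ is Suslin, $\mathbb{P}_S$ is ccc and countably distributive by Lemma~\ref{Sf}, and adds a cofinal branch $G$ through $S$; by Lemma~\ref{gen} applied to both $S$ and $T$, $G$ is simultaneously $\mathbb{P}_S$- and $\mathbb{P}_T$-generic over $V$. This gives a cofinal branch through $T$ in $V[G]$, and combined with the absoluteness afforded by countable distributivity one extracts the desired contradiction with $T$ being Suslin in $V$.
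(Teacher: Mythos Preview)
Your first paragraph is exactly the paper's proof: the corollary is declared immediate from Theorems~\ref{subtree} and~\ref{fbS}, with no further argument. You are right to notice that Theorem~\ref{fbS} carries the hypothesis that $T$ is infinitely splitting while the corollary as stated does not, and the paper does not comment on this.

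However, your attempt to close this gap does not succeed. Your forcing argument concludes by asserting that a cofinal branch through $T$ in $V[G]$, together with countable distributivity, ``extracts the desired contradiction with $T$ being Suslin in $V$.'' It does not: forcing with a Suslin tree \emph{always} adds a cofinal branch, and this is entirely compatible with the tree being Suslin in the ground model. In the proof of Theorem~\ref{fbS} the contradiction has a different source: since $T$ is infinitely splitting while $S$ is finitely splitting, for every $x\in S$ the set of $y>x$ with $y\notin S$ is dense below $x$ in $\mathbb{P}_T$, so the $\mathbb{P}_T$-generic branch must eventually leave $S$; yet $G\subset S$ by construction. That density argument is precisely where infinite splitting of $T$ is used, and your version never invokes anything playing its role.

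In fact the gap cannot be filled: the corollary is false without the infinite-splitting hypothesis. Under $\diamond$ there exist Suslin subtrees of $2^{<\w_1}$; any such $T$ is a finitely splitting subtree of itself of height $\aleph_1$, hence not Lindel\"of by Theorem~\ref{subtree} (equivalently, the fine-wedge topology on a finitely splitting tree is discrete). So the corollary should be read with the implicit hypothesis that $T$ is infinitely splitting, and under that reading the one-line proof---yours and the paper's---is already complete; your additional paragraphs are attempting to prove something that is not true.
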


\begin{proof}
Immediate from theorems \ref{subtree} and \ref{fbS}.
\end{proof}

\section{Examples of Lindelöf and non-Lindelöf trees}

If $s$ and $t$ are two functions with domain $\alpha$, we let $\Delta(s,t):=\{\xi<\alpha:s(\xi)\neq t(\xi)\}$. We write $s=^*t$ if and only if $\Delta(s,t)$ is finite.

We shall say that $\seq{e_\alpha:\alpha<\w_1}$ is a \emph{coherent sequence of injections} if $e_\alpha:\alpha\to\w$ is injective and $e_\alpha=^*e_\beta\restr\alpha$ for all $\alpha,\beta<\w_1$. Note that each $e_\alpha$ must have coinfinite range. Going forward, we shall speak simply of coherent sequences, which in the literature usually refers to finite to one functions. Let
\[
T^{\vec e}=\bigcup_{\alpha<\w_1}\{s\in {}^\alpha\w: s \text{ is injective}\,\wedge s=^*e_\alpha\}.
\]
It is clear that $T^{\vec e}$ is an infinitely splitting Aronszajn tree.

Our next goal is to construct a non-Lindelöf Aronszajn tree. The idea is to build a finitely splitting Aronszajn tree, and then make $\aleph_0$ many new nodes ``sprout" at each node, producing a larger, infinitely splitting tree. The constructions of Aronszajn trees that we're familiar with all produce infinitely splitting trees, so our first task is to build a finitely splitting one.

Recall that a tree if \emph{splitting} if every node has at least two distinct immediate successors. 

\begin{lem}\label{finAZFC}
There is a splitting subtree of $2^{<\w_1}$ which is Aronszajn.
\end{lem}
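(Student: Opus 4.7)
The strategy is to build $T \subseteq 2^{<\w_1}$ by transfinite recursion on $\alpha < \w_1$, specifying $T_\alpha \subseteq 2^\alpha$ at each stage. I maintain two invariants throughout: that $T\restr\alpha$ is a splitting subtree of $2^{<\alpha}$ with countable levels, and that $T\restr\alpha$ is normal, meaning each node of $T\restr\alpha$ has extensions to every height below $\alpha$.

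The successor stage is forced on us: set $T_{\alpha+1} = \{s\smallfrown i : s \in T_\alpha,\, i \in \{0,1\}\}$, which doubles the size of the level and hence preserves countability.

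The limit stage is the crux. Given $T\restr\alpha$ with $\alpha$ a countable limit, I need to select $T_\alpha$ as a countable set of cofinal branches through $T\restr\alpha$: for normality, each $s \in T\restr\alpha$ must be extended by some element of $T_\alpha$, and to prevent $T$ from growing a cofinal branch I cannot simply include every available branch. I couple the construction to an auxiliary coherent Aronszajn tree $U := T^{\vec e}$ via a level-preserving, order-preserving map $\pi \colon T \to U$ built alongside $T$. At the successor stage I extend $\pi$ by assigning $\pi(s\smallfrown 0)$ and $\pi(s\smallfrown 1)$ to (possibly equal) immediate successors of $\pi(s)$ in $U$, which exist because $U$ is infinitely splitting. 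At limit $\alpha$, for each $s \in T\restr\alpha$, I apply normality of $U$ to pick some $t \in U_\alpha$ extending $\pi(s)$, locate a cofinal branch $b$ of $T\restr\alpha$ extending $s$ whose $\pi$-image is the branch of $U\restr\alpha$ determined by $t$, and add $\bigcup b$ to $T_\alpha$ while setting $\pi(\bigcup b) = t$. Since $|T\restr\alpha| \leq \aleph_0$, only countably many branches are produced.

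That $T$ is Aronszajn is then automatic: any cofinal branch through $T$ would push forward via $\pi$ to a chain in $U$ meeting every level, hence a cofinal branch of $U$, contradicting that $U$ is Aronszajn. The main obstacle is ensuring, at each limit $\alpha$, the existence of the pullback branch $b$ inside $T\restr\alpha$; this imposes a nontrivial constraint on the choices at successor stages, since the images $\pi(s\smallfrown 0), \pi(s\smallfrown 1)$ must be rich enough collectively that every $t \in U_\alpha$ arising in the limit step has a traceable pullback chain. I expect to handle this by letting the successor-step choices of $\pi$-image range systematically across the (infinitely many) immediate successors of $\pi(s)$ in $U$, balancing the need for richness against keeping each $T_\alpha$ countable.
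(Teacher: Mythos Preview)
Your proposal has a genuine gap, and it is exactly the one you flag but do not resolve. At each successor step from a node $s\in T$, the images $\pi(s^\frown 0),\pi(s^\frown 1)$ can hit at most two of the infinitely many immediate successors of $\pi(s)$ in $U$. So at a limit $\alpha$, given $s\in T_\beta$ and any $t\in U_\alpha$ above $\pi(s)$, the unique immediate successor of $\pi(s)$ on the branch below $t$ will typically not equal either $\pi(s^\frown 0)$ or $\pi(s^\frown 1)$; hence there is no way to extend $s$ even one step in $T$ while keeping the $\pi$-image below $t$. Your suggested fix, ``ranging systematically'' over immediate successors, cannot work as stated: from a \emph{fixed} $s$ you have only two children, and it is the branch through that particular $s$ you must produce. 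Spreading choices across other preimages of $\pi(s)$ does not help, since you need the pullback branch to pass through $s$ itself. One can repair the idea by abandoning level-preservation (e.g.\ letting $\omega$ many binary levels of $T$ encode one level of $U$, so that every immediate successor in $U$ is eventually reachable), but this is a substantially different construction that your plan does not supply, and it introduces its own bookkeeping at limits of limits.

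By contrast, the paper sidesteps the pullback problem entirely. It builds a single coherent ``spine'' $\seq{x_\alpha:\alpha\in\Gamma}$ inside $2^{<\w_1}$ with $x_\alpha=^*x_\beta\restr\alpha$, obtained by pushing a coherent sequence of injections through a suitable bijection $\w_1\times\w\to\w_1$ and taking characteristic functions. The tree $T$ is then the set of all finite modifications of the spine. Coherence yields countable levels, the $=^*$ freedom gives splitting, and the Aronszajn property is proved by a Fodor argument that turns a putative cofinal branch of $T$ into an uncountable chain in the original coherent sequence.
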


\begin{proof}
Fix a coherent sequence $\vec e$ and a bijection $f:\w_1\times\w\to\w_1$ such that for every limit ordinal $\gamma<\w_1$, $f[\gamma\times\w]=\gamma$. Put $\Gamma=\lim(\w_1)\cup\{0\}$. Define $x_\alpha\in 2^\alpha$ for $\alpha\in\Gamma$ by $x_\alpha=\chi_{f[e_\alpha]}$, where $\chi_A$ denotes the characteristic function of $A$. This makes sense because $f[e_\alpha]\subset \alpha$.

\begin{nclaim}
If $\alpha,\beta\in\Gamma$ and $\alpha<\beta$ then $x_\alpha=^* x_\beta\restr\alpha$.
\end{nclaim}

\begin{ncproof}
For readability, we extend our $\Delta(s,t)$ notation to allow functions with different domains. More precisely, if $\dom(s)\le\dom(t)$, we write $\Delta(s,t)$ for $\Delta(s,t\restr\dom(s))$.

If $\alpha=0$ everything is trivial so we assume that $\alpha\ge\w$. Fix $\eta\in\Delta(x_\alpha,x_\beta)$, so $x_\alpha(\eta)\neq x_\beta(\eta)$. Since $f[\alpha\times\w]=\alpha$, there exist unique $\xi<\alpha$ and $n\in\w$ with $f(\xi,n)=\eta$.

Since $x_\alpha(\eta)\neq x_\beta(\eta)$, there are two possibilities: $\eta\in f[e_\alpha]\setminus f[e_\beta]$ or $\eta\in f[e_\beta]\setminus f[e_\alpha]$. We consider the two cases separately:
\begin{enumerate}[1.]
\item Suppose that $\eta\in f[e_\alpha]\setminus f[e_\beta]$. Then $e_\alpha(\xi)=n$ but $e_\beta(\xi)\neq n$, so $\xi\in\Delta(e_\alpha,e_\beta)$ and $\eta=f(\xi,e_\alpha(\xi))$.
\item Suppose that $\eta\in f[e_\beta]\setminus f[e_\alpha]$. Then $e_\beta(\xi)=n$ but $e_\alpha(\xi)\neq n$, so $\xi\in\Delta (e_\alpha,e_\beta)$ and $\eta=f(\xi,e_\beta(\xi))$.
\end{enumerate}
We have therefore established that
\[
\Delta(x_\alpha,x_\beta)\subset \{f(\xi,e_\alpha(\xi)):\xi\in\Delta(e_\alpha,e_\beta)\}\cup \{f(\xi,e_\beta(\xi)):\xi\in\Delta(e_\alpha,e_\beta)\}.
\]
Since $\vec e$ is coherent, the union on the right is finite, hence $\Delta(x_\alpha,x_\beta)$ is finite too.
\end{ncproof}

Given $\alpha<\w_1$, let $\gamma_\alpha$ be the unique ordinal in $\Gamma$ such that $\gamma_\alpha\le\alpha<\gamma_\alpha+\w$. Note that $\alpha\in\Gamma$ if and only if $\alpha = \gamma_\alpha$. Define
\[
T=\bigcup_{\alpha<\w_1}\{x\in 2^\alpha: x\restr \gamma_\alpha=^*x_{\gamma_\alpha}\}
\]
By the claim, $T$ is a subtree of $2^{<\w_1}$. Indeed, if $x\in 2^\alpha$ and $y\in T\cap 2^\beta$ satisfy $x\subset y$, then $\gamma_\alpha\le \gamma_\beta$, so $x\restr \gamma_\alpha=(y\restr \gamma_\beta)\restr \gamma_\alpha=^*x_{\gamma_\beta}\restr\gamma_\alpha=^* x_{\gamma_\alpha}$, where the last $=^*$ follows from the claim. The fact that $T$ has countable levels is immediate from the claim. As $x_\alpha\in T_\alpha$ for every $\alpha\in\Gamma$, we see that $T$ has height $\w_1$, and so $T$ is an $\aleph_1$-tree.

We point out that that, if $x\in T$, then $x^\frown \seq{0},x^\frown\seq{1}\in T$, because $\gamma_\alpha=\gamma_{\alpha+1}$ for every $\alpha<\w_1$.

To see that $T$ is Aronszajn, assume towards a contradiction that $\seq{y_\alpha:\alpha<\w_1}$ is a branch through $T$, so in particular $y_\alpha=^*x_\alpha$ for every $\alpha\in\lim(\w_1)$. Find $\eta_\alpha<\alpha$ for each $\alpha\in\lim(\w_1)$ so that $\Delta(y_\alpha,x_\alpha)\subset \eta_\alpha$. Since $\alpha\mapsto\eta_\alpha$ is regressive, by Fodor's Lemma there is some stationary set $E\subset\lim(\w_1)$ and some $\eta<\w_1$ such that $\eta_\alpha=\eta$ for every $\alpha\in E$. Since $|T_\eta|\le\aleph_0$, we may find $E'\subset E$ stationary and $x,y\in 2^\eta$ such that $y_\alpha\restr\eta=y$ and $x_\alpha\restr\eta=x$ for every $\alpha\in E'$. If $\alpha,\beta\in E'$, $\alpha<\beta$,
\[
x_\alpha\restr [\eta,\alpha)=y_\alpha\restr [\eta,\alpha)=y_\beta\restr [\eta,\alpha)=x_\beta\restr [\eta,\alpha)
\]
by the choice of $\eta$. Since $x_\alpha\restr\eta=x=x_\beta\restr\eta$, it follows that $x_\alpha=x_\beta\restr\alpha$. Finally, if $\xi<\alpha$ and $n:=e_\alpha(\xi)$, then $x_\alpha(f(\xi,n))=1$, so $x_\beta(f(\xi,n))=1$, so $f(\xi,n)\in f[e_\beta]$, so $e_\beta(\xi)=n$. We have thus shown that $\seq{e_\alpha:\alpha\in E'}$ is a chain, which is absurd because $E'$ is uncountable.
\end{proof}

\begin{lem}\label{finintoinf}
There is an infinitely splitting Aronszajn tree with a finitely splitting subtree of height $\aleph_1$.
\end{lem}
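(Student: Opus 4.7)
My plan is to enlarge the finitely splitting Aronszajn tree $T$ furnished by Lemma \ref{finAZFC} by grafting a copy of the countable tree $\w^{<\w}$ above each of its nodes. At each $x\in T$ I attach the ``hair'' $\{(x,s):s\in\w^{<\w}\setminus\{\emptyset\}\}$, which is order-isomorphic to $\w^{<\w}$, and whose $\w$-many minimal elements $(x,\seq{n})$ become new immediate successors of $x$. Formally,
\[
T'=T\cup\{(x,s):x\in T,\ s\in\w^{<\w}\setminus\{\emptyset\}\},
\]
with $<_{T'}$ defined by keeping $<_T$ on $T$, decreeing $x<_{T'}(x,s)$ whenever $x\in T$ and $s\neq\emptyset$, and $(x,s)<_{T'}(x,t)$ iff $s\subsetneq t$, with no other comparabilities.

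Next I would verify four properties. (i) $T'$ is a tree: the predecessors of $(y,t)$ are the $T$-predecessors of $y$ followed in order by $\{(y,s):s\subsetneq t\}$, forming a chain of order-type $\h_T(y)+|t|$, so $\h_{T'}((y,t))=\h_T(y)+|t|$. (ii) Levels are countable: a hair node has height $\h_T(x)+|s|$ with $0<|s|<\w$, which is never a limit ordinal, so at any limit $\alpha<\w_1$ one has $T'_\alpha=T_\alpha$, which is countable; and at a successor height $\alpha=\beta+n$ with $\beta$ zero or limit and $1\le n<\w$, the new nodes decompose as the finite union $\bigcup_{k=1}^{n}\{(x,s):x\in T_{\beta+n-k},\ |s|=k\}$ of countable sets. (iii) $T'$ has no cofinal branch: such a branch would meet every limit level $T_\alpha=T'_\alpha$ and so induce a cofinal chain in $T$, contradicting that $T$ is Aronszajn. (iv) $T'$ is infinitely splitting: each $x\in T$ acquires the new immediate successors $(x,\seq{n})$ for $n\in\w$, and each $(x,s)\in T'\setminus T$ has immediate successors $(x,s^\frown\seq{n})$ for $n\in\w$. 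Finally, $T$ is downward closed in $T'$ by construction, hence a subtree, and it is finitely splitting of height $\w_1$ by Lemma \ref{finAZFC}.

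There is no serious obstacle; the argument is combinatorial bookkeeping. The only slightly delicate step is the level count at successor heights, for which one must decompose $\alpha$ uniquely as $\beta+n$ with $\beta$ zero or limit and $1\le n<\w$ in order to enumerate the hair contributions at level $\alpha$.
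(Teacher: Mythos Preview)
Your argument is correct and in fact simpler than the paper's. One small wording issue: as written, ``no other comparabilities'' would break transitivity, since $z<_T x$ and $x<_{T'}(x,s)$ must force $z<_{T'}(x,s)$; your very next sentence shows you intend this, so just say you take the transitive closure (or explicitly include $z<_{T'}(x,s)$ whenever $z\le_T x$).

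The paper proceeds differently. Starting from the same $T\subset 2^{<\w_1}$, it builds a tree $U\subset\w^{<\w_1}$ level by level: at successor levels every node gets all $\w$ extensions, and at a limit level $\alpha$ one sets $U_\alpha=\{u\cup t\restr[\dom(u),\alpha):u\in U\restr\alpha,\ t\in T_\alpha\}$, so that every node of $U$ is ``eventually a $T$-node''. Two claims verify that $U$ is closed under extension and restriction, and a Fodor-style pressing-down argument (pushing a putative branch back onto $T$) shows $U$ is Aronszajn. This $U$ is normal: every node extends to every higher level.

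Your construction sidesteps all of that. Because each grafted copy of $\w^{<\w}$ has height $\w$, hair nodes have successor height only, so limit levels of $T'$ are literally the limit levels of $T$; the no-branch argument is then a one-liner, with no need for Fodor. The price is that $T'$ is not normal (a hair node cannot be extended past height $\h_T(x)+\w$), but the lemma does not ask for normality, nor does the subsequent corollary, so nothing is lost for the purposes of this paper.
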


\begin{proof}
Let $\vec e$ be a coherent sequence and let $T\subset 2^{<\w_1}$ be the tree constructed from $\vec e$ in Lemma \ref{finAZFC}. We recursively define a tree $U\subset\w^{<\w_1}$ level by level, starting with $U_0=\{\emptyset\}$. For successor stages, we let $U_{\alpha+1}=\{u^\smallfrown \seq{n}: u\in U_\alpha\wedge n\in\w\}$. If $\alpha<\w_1$ is a limit ordinal, we let $U_\alpha=\{u\cup t\restr [\dom(u),\alpha):u\in U\restr \alpha \wedge t \in T_\alpha\}$. An easy induction shows that $U_\alpha$ is countable and that $T_\alpha\subset U_\alpha$ for every $\alpha$.

\begin{claim}
If $\beta<\alpha$, $t\in T_\alpha$ and $u\in U_\beta$, then $u\cup t\restr[\beta,\alpha)\in U_\alpha$.
\end{claim}

\begin{cproof}
By induction on $\alpha$:
\begin{itemize}
\item If $\alpha=0$, then it's obvious.
\item Suppose that $\alpha$ is a successor ordinal, say $\alpha=\gamma+1$. Then $ u \cup (t\restr [\beta,\alpha) )= (u\cup t\restr [\beta,\gamma))^\smallfrown \seq{t(\gamma)} $ which belongs to $U_\alpha$ by the inductive hypothesis and the definition of $U_\alpha$.
\item If $\alpha$ is a limit ordinal, this is immediate from the definition of $U_\alpha$.\qedhere
\end{itemize} 
\end{cproof}

\begin{claim}
If $\alpha<\w_1$, $\beta<\alpha$ and $v\in U_\alpha$, then $v\restr\beta\in U$.
\end{claim}

\begin{cproof}
By induction on $\alpha$:
\begin{itemize}
\item If $\alpha=0$ then it's vacuously true.
\item Suppose that $\alpha$ is a successor ordinal, say $\alpha=\gamma+1$. Fix $v\in U_\alpha$, so $v=u^\smallfrown \seq{n}$ for some $u\in U_{\gamma}$ and $n\in\w$. Let $\beta<\alpha$. If $\beta=\gamma$, then $v\restr\beta=u\in U$ as desired. If $\beta<\gamma$, then $v\restr\beta=u\restr\beta\in U$ by the inductive hypothesis applied to $\gamma$.
\item Suppose that $\alpha$ is a limit ordinal and let $v\in U_\alpha$, say $v=u\cup (t\restr [\dom(u),\alpha))$, where $u\in U_\gamma$, $\gamma<\alpha$, and $t\in T$. Let $\beta<\alpha$. If $\beta=\gamma$, then $v\restr\beta=u\in U_\beta$. If $\beta<\gamma$, then $v\restr\beta=u\restr\beta\in U$ by the inductive hypothesis applied to $\gamma$. If $\gamma<\beta$, then $v\restr\beta = u\cup (t\restr[\dom(u),\beta))$. We now induct on $\beta$. If $\beta$ is a limit ordinal, then $v\restr\beta\in U_\beta$ by definition of $U_\beta$. If $\beta$ is a successor ordinal, say $\beta=\delta+1$, then $v\restr\beta = (u\cup t\restr [\dom(u),\delta))^\smallfrown \seq{t(\delta)} $. By Claim 1, $ u\cup t\restr [\dom(u),\delta) \in U_\delta $, and so $v\restr\beta\in U_\beta$ by definition of $U_\beta$.\qedhere 
\end{itemize}
\end{cproof}

To see that $U$ is Aronszajn, suppose towards a contradiction that $b$ is a cofinal branch through $U$. Let $x_\alpha$ be the $ \alpha^\text{th}$ point of $b$. For each limit ordinal $\alpha<\w_1$, there exist $u_\alpha<x_\alpha$ and $t_\alpha\in T_\alpha$ such that $x_\alpha=u_\alpha\cup t_\alpha\restr [\dom(u_\alpha),\alpha)$. Define  $f:\lim(\w_1)\to\w_1$ by $f(\alpha)=\dom(u_\alpha)$. Since $f$ is regressive, we may find a stationary set $\Gamma\subset\lim(\w_1)$ and some $\eta<\w_1$ such that $f``\Gamma=\{\eta\}$. As $|T_\eta|\le\aleph_0$, we may assume, by shrinking $\Gamma$ if necessary, that there is some $t\in T_\eta$ such that $t=t_\alpha\restr\eta$ for all $\alpha\in \Gamma$. If $\alpha,\beta\in\Gamma$, $\alpha<\beta$, then $t_\alpha\restr[\eta,\alpha)=x_\alpha\restr[\eta,\alpha)=x_\beta\restr[\eta,\alpha)=t_\beta\restr[\eta,\alpha)$ by our choice of $\eta$. But then $t_\alpha=t_\beta\restr\alpha$ because they both agree with $t$ below $\eta$. This means that $\seq{t_\alpha:\alpha\in\Gamma}$ is an uncountable chain in $T$, contradicting that $T$ is Aronszajn.
\end{proof}

\begin{cor}
There is a non-Lindelöf Aronszajn tree.
\end{cor}

\begin{proof}
Apply lemmas \ref{finAZFC} and \ref{finintoinf} to obtain an infinitely splitting Aronszajn tree $T$ with a finitely splitting subtree of uncountable height. Then $T$ is not Lindelöf by  Theorem \ref{subtree}.
\end{proof}
Recall that Jensen's \emph{diamond principle}, denoted $\diamond$, asserts the existence of a sequence $\seq{A_\alpha:\alpha<\w_1}$ such that $A_\alpha\subset\alpha$ and for every $A\subset\w_1$ there exist stationarily many $\alpha<\w_1$ with $A\cap\alpha=A_\alpha$. We shall need to modify the $\diamond$-sequence so that it ``guesses" functions $\w_1\to[\w_1]^{<\w}$.

\begin{lem}\label{finDiamond}
The principle $\diamondsuit$ holds if and only if there is a sequence $\seq{f_\alpha:\alpha<\w_1}$ such that $f_\alpha:\alpha\to[\alpha]^{<\w}$ and for every $f:\w_1\to[\w_1]^{<\w}$ the set $\{\alpha:f\restr \alpha=f_\alpha\}$ is stationary.
\end{lem}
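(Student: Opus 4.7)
The plan is to prove both directions by standard coding arguments. For the easier $(\Leftarrow)$ direction, assume we are given a sequence $\seq{f_\alpha:\alpha<\w_1}$ as in the statement, and define $A_\alpha:=\{\xi<\alpha:f_\alpha(\xi)=\{\xi\}\}$. Given any $A\subset\w_1$, let $g_A:\w_1\to[\w_1]^{<\w}$ be defined by $g_A(\xi)=\{\xi\}$ if $\xi\in A$ and $g_A(\xi)=\emptyset$ otherwise. By hypothesis, $\{\alpha<\w_1:g_A\restr\alpha=f_\alpha\}$ is stationary, and on this set $A\cap\alpha=A_\alpha$. Hence $\seq{A_\alpha:\alpha<\w_1}$ witnesses $\diamond$.

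For the $(\Rightarrow)$ direction, fix a $\diamond$-sequence $\seq{A_\alpha:\alpha<\w_1}$ and a bijection $e:\w_1\to\w_1\times\w_1$. A standard argument shows that $C:=\{\alpha<\w_1:e[\alpha]=\alpha\times\alpha\}$ is a club. For $\alpha\in C$, set
\[
f_\alpha(\xi):=\begin{cases}\{\eta<\alpha:(\xi,\eta)\in e[A_\alpha]\} & \text{if this set is finite,}\\ \emptyset & \text{otherwise,}\end{cases}
\]
for $\xi<\alpha$, and define $f_\alpha$ arbitrarily for $\alpha\notin C$. Given any $f:\w_1\to[\w_1]^{<\w}$, encode it as the subset $A_f:=e^{-1}\!\left[\bigcup_{\xi<\w_1}\{\xi\}\times f(\xi)\right]\subset\w_1$. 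Since $f[\alpha]$ is countable for every $\alpha<\w_1$, the set $C':=\{\alpha<\w_1:\bigcup f[\alpha]\subset\alpha\}$ is also a club.

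By $\diamond$ applied to $A_f$, the set $S:=\{\alpha:A_f\cap\alpha=A_\alpha\}$ is stationary, so $S\cap C\cap C'$ is stationary. For any $\alpha$ in this intersection and any $\xi<\alpha$ we have $f(\xi)\subset\alpha$, so $e[A_\alpha]\cap(\alpha\times\alpha)=e[A_f\cap\alpha]=\bigcup_{\xi<\alpha}\{\xi\}\times f(\xi)$; therefore $\{\eta<\alpha:(\xi,\eta)\in e[A_\alpha]\}=f(\xi)$ is finite, and our definition gives $f_\alpha(\xi)=f(\xi)$. Hence $f\restr\alpha=f_\alpha$ for stationarily many $\alpha$, as required.

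There is no serious obstacle here: both directions are routine applications of $\diamond$ together with the familiar trick of encoding a more complex object (a function into $[\w_1]^{<\w}$) as a subset of $\w_1$ via a bijection that behaves well on a club. The only point requiring minor care is ensuring that on the stationary guessing set we also have $f(\xi)\subset\alpha$ for $\xi<\alpha$, which is handled by intersecting with the club $C'$ above.
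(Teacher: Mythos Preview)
Your proof is correct and is exactly the kind of standard coding argument the paper has in mind; the paper in fact omits the proof entirely, stating only that ``the proof is a standard coding argument, and we omit it.'' Your write-up fills in that omission cleanly, with the only mild care point (restricting to the club $C'$ so that $f(\xi)\subset\alpha$ for $\xi<\alpha$) handled correctly.
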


The proof is a standard coding argument, and we omit it.

A sequence $\seq{f_\alpha:\alpha<\w_1}$ of the kind appearing in the statement of Lemma \ref{finDiamond} will also be referred to as a $\diamondsuit$-sequence.
\begin{thm}\label{spL}
If $\diamondsuit$ holds, then there is a special Lindelöf tree.
\end{thm}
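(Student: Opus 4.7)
The plan is to construct, under $\diamondsuit$, an $\aleph_1$-tree $T\subset\w^{<\w_1}$ that is both special and Lindel\"of; by Theorem \ref{subtree}, the latter is equivalent to $T$ having no uncountable finitely splitting subtree, and this is what the $\diamondsuit$-diagonalization will arrange. Fix a coherent sequence of injections $\vec e=\seq{e_\alpha:\alpha<\w_1}$ with $e_\alpha:\alpha\to\w$ of coinfinite range, and a $\diamondsuit$-sequence $\seq{f_\alpha:\alpha<\w_1}$ as in Lemma \ref{finDiamond}. Via a fixed level-compatible bijection $\w^{<\w_1}\leftrightarrow\w_1$, each $f_\alpha$ will be interpreted as a guess at a function $f:T\to[T]^{<\w}$ with $f(x)\subset I_T(x)$, namely the $x\mapsto I_S(x)$ data of a potential finitely splitting subtree $S$ of $T$.

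Build $T$ by recursion on levels. Set $T_0=\{\emptyset\}$ and, at successors, $T_{\alpha+1}=\{s^\smallfrown\seq{n}:s\in T_\alpha,n\in\w\}$, so $T$ is everywhere infinitely splitting. At limit $\alpha$, for each $x\in T\restr\alpha$ choose an extension $t_x\in\w^\alpha$ with $t_x\supset x$, $t_x=^*e_\alpha$, $t_x\restr\gamma\in T_\gamma$ for every $\gamma<\alpha$, and $t_x$ \emph{unsafe} for $f_\alpha$ in the sense that for some $\beta\in[\dom(x),\alpha)$, $t_x\restr(\beta+1)\not\in f_\alpha(t_x\restr\beta)$. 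Unsafety is arranged by picking $n_x\in\w$ with $x^\smallfrown\seq{n_x}\not\in f_\alpha(x)$, possible because $f_\alpha(x)$ is finite and $I_T(x)$ is infinite, and then extending above $\dom(x)+1$ using the $e_\alpha$-template. Put $T_\alpha=\{t_x:x\in T\restr\alpha\}$. The main technical hurdle is the compatibility requirement $t_x\restr\gamma\in T_\gamma$ at limit $\gamma<\alpha$, since the finite discrepancy between $e_\gamma$ and $e_\alpha\restr\gamma$ may in principle cause $t_x\restr\gamma$ to differ from all $t_y$ built at stage $\gamma$. A fusion-style induction along a cofinal $\w$-sequence $\seq{\gamma_n:n<\w}$ in $\alpha$ resolves this: one builds the $t_x$'s in parallel and coordinates the wiggle points at each $\gamma_n$, relying on the coherence of $\vec e$ to bound the necessary adjustments to finitely many places, compatible with $t_x=^*e_\alpha$.

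Three properties of $T$ must then be verified. First, $T$ is an $\aleph_1$-tree with no cofinal branch: countable levels are immediate, and the Fodor argument of Lemma \ref{finAZFC}, applied to the template $t\restr\alpha=^*e_\alpha$, rules out cofinal branches. Second, $T$ is special: each $t\in T$ is determined by the finite deviation data $(\Delta(t,e_{\dom(t)}),\,t\restr\Delta(t,e_{\dom(t)}))$, yielding by a standard encoding an order-preserving map $T\to\Q$. Third, $T$ is Lindel\"of: given a finitely splitting subtree $S\subset T$ with $\h(S)=\w_1$, define $f(x):=I_S(x)$; by $\diamondsuit$ there is a stationary set of $\alpha$ with $f_\alpha=f\restr(T\restr\alpha)$. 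Picking such $\alpha$ and any $s\in S\cap T_\alpha$ (nonempty since $\h(S)=\w_1$), the point $s$ is safe for $f$ by the very definition of $S$, hence safe for $f_\alpha$, contradicting the construction of $T_\alpha$.
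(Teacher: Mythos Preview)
Your diagonalization idea---arranging that every node installed at a limit level $\alpha$ is unsafe for $f_\alpha$, so that by Lemma~\ref{2.2} every coded cover has a countable subcover---is exactly what the paper does. Where you diverge is in how specialness is secured, and this is where the proposal breaks down.

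The paper does not use a coherent sequence at all. It builds the specializing map $\phi:T\to\Q$ explicitly alongside the tree, maintaining the standard extension property $(\ast)$: for every $x\in T\restr\alpha$ and every rational $q$ on the correct side of $\phi(x)$ there is $y\in T_\alpha$ above $x$ with $\phi(y)=q$. This makes the limit step routine: one chooses a cofinal branch whose $\phi$-values converge to the desired $q_k$, and the single extra requirement of avoiding $f_\alpha(x_k)$ at the first step above $x_k$ is trivially compatible, since $\phi$ bijects $I(x_k)$ onto an interval of $\Q$ while $f_\alpha(x_k)$ is finite. Specialness is then immediate from $\phi$.

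Your template approach creates precisely the problem you label the ``main technical hurdle'' and then do not solve. At stage $\alpha$, threading a branch above $x^\smallfrown\seq{n_x}$ through the already-constructed levels means passing through nodes $t^{(\gamma_n)}_{y_n}\in T_{\gamma_n}$ along a cofinal sequence $\gamma_n\nearrow\alpha$. Each such node carries its own unsafe wiggle (the value $n_{y_n}$ at position $\dom(y_n)$, chosen at stage $\gamma_n$ to avoid $f_{\gamma_n}(y_n)$), and these wiggles lie in the pairwise disjoint intervals $[\gamma_{n-1},\gamma_n)$. They therefore accumulate, and the resulting cofinal branch need not be $=^* e_\alpha$. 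Coherence of $\vec e$ bounds the discrepancy between $e_{\gamma_n}$ and $e_\alpha\restr\gamma_n$, but says nothing about these diagonalization wiggles; your ``fusion-style induction'' remark is a hope, not an argument. Making this work would require a much stronger inductive hypothesis about which branches are available at each limit stage, and it is not clear that such a hypothesis can be maintained together with the unsafety requirement.

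Your specialness argument is also a gap. The map $t\mapsto\bigl(\Delta(t,e_{\dom t}),\,t\restr\Delta(t,e_{\dom t})\bigr)$ takes values in $[\w_1]^{<\w}\times \w^{<\w}$, which is uncountable, so it does not yield a countable antichain decomposition; and since your successor step allows all of $\w$, the sequences in $T$ are not injective, so the standard specialness arguments for coherent trees of injections do not apply. Coherent Aronszajn trees are not automatically special (there are coherent Suslin trees under $\diamondsuit$), so something more is needed here.
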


\begin{proof}
We construct a \textit{normal}, infinitely splitting $T$ level by level, together with a specializing function $\phi:T\to\Q$. To make sure that $\phi$ can be extended at limit stages, we require that
\begin{equation}\label{s1}
\forall x\in T\restr\alpha\, \forall q\in\Q(q<\phi(x)\to \exists y\in {T}_\alpha(x<y\wedge \phi(y)=q))\tag{{$\ast$}}
\end{equation}
holds for every $\alpha<\w_1$. The construction will also depend on a fixed $\diamondsuit$ sequence $\seq{f_\alpha:\alpha<\w_1}$ such that $f_\alpha:\alpha\to[\alpha]^{<\w}$ and, for every $f:\w_1\to [\w_1]^{<\w}$, the set $\{\alpha<\w_1:f\restr\alpha=f_\alpha\}$ is stationary. Such a sequence of functions exists by Lemma \ref{finDiamond}.

The underlying set of the tree will be $\w_1$, with $T\restr\alpha=\w\alpha$ and ${T}_\alpha=[\w\alpha,\w(\alpha+1))$ for $\alpha>\w$. Let $0$ be the root of $T$. For the successor step, given a node at level $\alpha$ we put $\aleph_0$ many nodes immediately above $x$ and let $\phi\restr I(x)$ be a bijection between $I(x)$ and $\Q\cap (\phi(x),\infty)$. Note that normality and \eqref{s1} continue to hold.

Suppose now that $\alpha<\w_1$ is a limit ordinal and we've constructed $T\restr\alpha$ and $\phi\restr T\restr\alpha$. List the set $\{(x,q)\in T\restr\alpha\times\Q:\phi(x)<q\}$ as $\{(x_k,q_k):k\in\w\}$. 
The construction splits into two cases. 

\underline{Case 1:} $\w\alpha>\alpha$. Fix $k\in\w$. By normality and \eqref{s1}, we can choose a branch $b_k$ with least element $x_k$ such that the heights of members of $b_k$ converge to $\alpha$ and $\sup(\phi``b_k)=q_k$. Now put a node above $b_k$ and let $\phi$ take the value $q_k$ at this node.

\underline{Case 2:} $\w\alpha=\alpha$. Fix $k\in\w$. Choose a cofinal branch $b_k\subset T\restr\alpha$ satisfying the following properties:
\begin{enumerate}[(i), font=\normalfont]
\item $x_k\in b_k$,
\item $\sup(\phi``b_k)=q_k$,
\item the unique point on $b_k$ immediately above $x_k$ does not belong to $f_\alpha(x_k)$.
\end{enumerate}
To achieve this, use that, by our construction at successor stages, $\phi$ maps $I(x_k)$ onto $\Q\cap (\phi(x_k),\infty)$ to find a point $z_k\in I(x_k)\setminus f_\alpha(x_k)$ with $\phi(z_k) < q_k$. Then construct $b_k$ by repeatedly applying $(\ast)$ and taking a downwards closure. Finally, put a node above $b_k$ on level $\alpha$ and let $\phi$ take the value $q_k$ at this node. This completes the construction of $T$.

It is obvious that $\phi$ is a specializing function for $T$. To see that $T$ is Lindelöf, consider a basic cover $f\in \prod_{x\in T}[I(x)]^{<\w}$. Pick $\alpha<\w_1$ limit such that $\w\cdot\alpha=\alpha$ and $f\restr\alpha=f_\alpha$, so that $f\restr (T\restr\alpha)=f_\alpha$.

The key observation is that every node at level $\alpha$ is in $\uarrow y\setminus\bigcup_{z\in f_\alpha(y)}\uarrow z$ for some $y\in T\restr\alpha$. By Lemma \ref{2.2}, $f_\alpha$ covers $T$, hence so does $f\restr\alpha$, and we're done.
\end{proof}

\begin{cor}\label{LnonS}
If $\diamondsuit$ holds, then there is a Lindel\"of tree that is not Suslin.
\end{cor}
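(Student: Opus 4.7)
The plan is to invoke Theorem \ref{spL} and then observe that specialness is incompatible with being Suslin at the $\aleph_1$ level. More explicitly, assuming $\diamondsuit$, apply Theorem \ref{spL} to obtain a special Lindelöf tree $T$. By construction (or just by being an $\aleph_1$-tree of height $\omega_1$ with countable levels), $|T|=\aleph_1$. Since $T$ is special, it can be written as $T=\bigcup_{n<\omega}A_n$ with each $A_n$ an antichain. Because $\aleph_1$ is regular and uncountable, some $A_n$ must have size $\aleph_1$, so $T$ contains an uncountable antichain. Hence $T$ is not Suslin, while it is Lindelöf by Theorem \ref{spL}.

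The only step requiring any care is verifying that the tree produced by Theorem \ref{spL} really has size $\aleph_1$; but this is immediate from the construction there, where the underlying set of the tree is $\omega_1$ itself. So there is essentially no obstacle: the corollary is a one-line consequence of Theorem \ref{spL} together with the trivial pigeonhole observation that a countable union of antichains covering an uncountable set must contain an uncountable antichain.
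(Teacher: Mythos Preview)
Your argument is correct and follows the same approach as the paper: invoke Theorem \ref{spL} to obtain a special Lindel\"of tree, and then note that a special tree cannot be Suslin. The paper simply asserts that special trees are never Suslin, whereas you spell out the easy pigeonhole reason; this is the only (cosmetic) difference.
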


\begin{proof}
Assume $\diamondsuit$ holds. By Theorem \ref{spL}, there is a special Lindel\"of tree. But special trees can never be Suslin.
\end{proof}
So, under $\diamondsuit$, we have strict inclusion of our classes of trees:
\[
\{\text{Suslin}\}\subsetneq \{\text{Lindelöf}\} \subsetneq \{\text{Aronszajn}\}.
\]

\section{Adding subtrees}

In this section, we address the following question: given an infinitely splitting Aronszajn tree $T$, can we find a poset $\mathbb{P}$ such that $\Vdash_\mathbb{P}$ $T$ is a non-Lindel\"of Aronszajn tree? In other words, $\mathbb{P}$ forces the existence of an uncountable finitely splitting subtree of $T$ but at the same time adds no uncountable branches to $T$.

\begin{defi}
Given an infinitely splitting $\aleph_1$-tree $T$, we let $\D_T$ be the following poset: conditions are finite functions with $p\in \prod_{x\in F}[I(x)]^{<\w}$, where $F\in [T]^{<\w}$, such that $\emptyset\not\in\ran(p)$ and which satisfy the following property:
\begin{equation}
\forall x,y\in\dom(p)(x<y\to y\restr(\h(x)+1)\in p(x)).\tag{$\dagger$}
\end{equation}
The order on $\D_T$ is $p\le q \iff p\supset q$.
\end{defi}
The idea is that a condition is a promise that a certain subtree will be finitely splitting at each point of the condition's domain. 
\begin{lem}[Baumgartner]\label{Baumgartner}
Let $T$ be a tree with no uncountable branches. Suppose that $\mathscr{A}$ is an uncountable collection of pairwise disjoint non-empty finite subsets of $T$. Then there exist $a,b\in \mathscr{A}$ such that for all $x\in a$ and $y\in b$, $x\perp y$.
\end{lem}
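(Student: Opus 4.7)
The plan is to argue the contrapositive: assuming every pair $a, b \in \mathscr{A}$ contains comparable cross-elements $x \in a$ and $y \in b$ with $x \parallel y$, produce an uncountable chain in $T$, contradicting the hypothesis that $T$ has no uncountable branches. A preliminary pigeonhole lets me pass to an uncountable subfamily in which every $a \in \mathscr{A}$ has a common finite size $n$, and I induct on $n$. The base case $n = 1$ is immediate: the family amounts to an uncountable subset of $T$ in which every pair is comparable, hence an uncountable chain, absurd.

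For the inductive step, enumerate $\mathscr{A} = \{a_\xi : \xi < \omega_1\}$ with fixed indexings $a_\xi = \{a_\xi^1, \ldots, a_\xi^n\}$. For each $a \in \mathscr{A}$, the downset $\darrow a = \bigcup_{x \in a} \darrow x$ is a finite union of chains; since every chain in $T$ is countable (else an uncountable branch would exist), $\darrow a$ is countable. Pairwise disjointness of $\mathscr{A}$ then implies that only countably many $b \in \mathscr{A}$ have $b \cap \darrow a \neq \emptyset$, and combined with the contradiction hypothesis this forces uncountably many $b \in \mathscr{A}$ to satisfy the alternative configuration $a^i \le b^j$ for some indices $i, j$ depending on $a$. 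A double pigeonhole yields an uncountable $\mathscr{A}_0 \subseteq \mathscr{A}$ and fixed indices $i^*, j^*$ such that every $a \in \mathscr{A}_0$ has uncountably many $b \in \mathscr{A}_0$ with $a^{i^*} \le b^{j^*}$.

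The geometric core is this: whenever $a, b, c \in \mathscr{A}_0$ satisfy $a^{i^*} \le c^{j^*}$ and $b^{i^*} \le c^{j^*}$, both $a^{i^*}$ and $b^{i^*}$ lie in the chain $\darrow c^{j^*}$, and so they are comparable. A transfinite recursion of length $\omega_1$, selecting at each stage an element of $\mathscr{A}_0$ whose $j^*$-th coordinate dominates all previously chosen $i^*$-th coordinates, then extracts an uncountable chain $(a_\xi^{i^*})_{\xi \in S^*}$ in $T$, the required contradiction. The main technical obstacle is managing the countable limit stages of this recursion: naive Ramsey-theoretic homogeneity is not available (since $\omega_1 \not\to (\omega_1)^2_2$ in ZFC), so the existence of a continuation at each limit must be extracted by careful exploitation of how the witnesses $b^{j^*}$ accumulate above the already-chosen $a^{i^*}$'s within the tree structure, probably using a further Fodor-style pressing-down inside $\mathscr{A}_0$.
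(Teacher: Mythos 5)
The paper does not prove this lemma itself (it cites \cite[Lemma III.5.18]{kunenSetTheory2011}), so I am judging your argument on its own terms. Your reductions are sound and are exactly the standard ones: since every $\darrow x$ is a countable chain, only countably many $b\in\mathscr{A}$ meet $\darrow a$, so almost all comparabilities point upward; a pigeonhole fixes coordinates $(i^*,j^*)$; and the ``geometric core'' --- two nodes lying below a common node of a tree are comparable --- is precisely the right observation. But the final assembly has a genuine gap, and it is not where you locate it. The property you have secured is: for each $a$, the set $W_a=\{c: a^{i^*}<c^{j^*}\}$ is uncountable. Your recursion needs, at stage $\xi$, a \emph{single} $c$ with $a_\eta^{i^*}<c^{j^*}$ for \emph{all} $\eta<\xi$, i.e.\ an element of $\bigcap_{\eta<\xi}W_{a_\eta}$. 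Two uncountable subsets of an $\aleph_1$-sized family can be disjoint, so this already fails at stage $2$, not merely at limit stages, and no Fodor-style pressing-down addresses it. Nor can you chain successively: $a^{i^*}<b^{j^*}$ says nothing about the relation between $a^{i^*}$ and $b^{i^*}$ when $i^*\neq j^*$, so consecutive choices need not be comparable. (There is also a smaller slip in the double pigeonhole: after shrinking to $\mathscr{A}_0$ the uncountably many witnesses $b$ for a given $a$ need not lie in $\mathscr{A}_0$; this is harmless if you let witnesses range over all of $\mathscr{A}$, which the geometric core permits.)

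The standard repair, which is essentially what the cited proof does, is to replace ``uncountably many'' by membership in a single uniform ultrafilter $U$ on the index set $\omega_1$: enumerate $\mathscr{A}=\{a_\xi:\xi<\omega_1\}$, note that for each $\xi$ the co-countable set $\{\eta:\eta>\xi\}$ is partitioned into $\le n^2$ pieces according to the least $(i,j)$ with $a_\xi^{i}<a_\eta^{j}$, choose the piece lying in $U$, and stabilize the resulting pair to $(i^*,j^*)$ on an uncountable $S$. Then for any $\xi_1,\xi_2\in S$ the two witness sets both belong to $U$ and hence meet, yielding a common $\eta$ with $a_{\xi_1}^{i^*},a_{\xi_2}^{i^*}<a_\eta^{j^*}$; by your geometric core these are comparable, so $\{a_\xi^{i^*}:\xi\in S\}$ is an uncountable chain (its elements are distinct by pairwise disjointness), the desired contradiction --- with no transfinite recursion at all. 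Incidentally, once this is done your induction on $n$ is superfluous: the argument for general $n$ never invokes the case $n-1$, and the base case is subsumed.
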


For a proof, see \cite[Lemma III.5.18]{kunenSetTheory2011}.

\begin{lem}\label{ccc}
Let $T$ be an Aronszajn tree. Then $\D_T$ has the ccc.
\end{lem}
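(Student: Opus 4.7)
The plan is to run a standard $\Delta$-system argument, reducing compatibility to Baumgartner's lemma. Suppose towards a contradiction that $\{p_\alpha : \alpha < \w_1\} \subset \D_T$ is an antichain. Since $\{\dom(p_\alpha) : \alpha < \w_1\}$ is an uncountable family of finite sets, by the $\Delta$-system lemma we may thin it out to a root $R \in [T]^{<\w}$ such that the sets $\dom(p_\alpha) \setminus R$ are pairwise disjoint and nonempty (at most one condition has domain exactly $R$, so we discard it). For each $x \in R$, the set $[I(x)]^{<\w}$ is countable because $T$ is an $\aleph_1$-tree, so by further thinning we may assume $p_\alpha \restr R = p_\beta \restr R$ for all $\alpha, \beta$.

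Now apply Lemma \ref{Baumgartner} to the family $\mathscr{A} = \{\dom(p_\alpha) \setminus R : \alpha < \w_1\}$; this is legitimate because $T$ is Aronszajn, hence has no uncountable branch. This yields $\alpha \neq \beta$ such that every element of $\dom(p_\alpha) \setminus R$ is incomparable with every element of $\dom(p_\beta) \setminus R$. I will then show $p_\alpha$ and $p_\beta$ are compatible by checking that their union $r := p_\alpha \cup p_\beta$ is a condition.

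Since $p_\alpha$ and $p_\beta$ agree on $R = \dom(p_\alpha) \cap \dom(p_\beta)$, $r$ is a well-defined finite function with $\emptyset \notin \ran(r)$ and $r(x) \in [I(x)]^{<\w}$ for each $x \in \dom(r)$. The only thing to verify is condition $(\dagger)$: whenever $x, y \in \dom(r)$ with $x < y$, we need $y \restr (\h(x)+1) \in r(x)$. If $x, y$ both lie in $\dom(p_\alpha)$ or both in $\dom(p_\beta)$, this holds because $p_\alpha$ and $p_\beta$ are conditions. The only remaining cases have one of $x, y$ in $\dom(p_\alpha) \setminus R$ and the other in $\dom(p_\beta) \setminus R$, but in those cases the Baumgartner-provided incomparability forbids $x < y$, so there is nothing to check. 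This contradicts the antichain assumption.

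The main potential obstacle is the verification in step 5, which comes down to carefully enumerating cases for the positions of $x$ and $y$ relative to $R$; each case either reduces to one of the original conditions $p_\alpha$, $p_\beta$ being a condition, or is ruled out by Baumgartner. The use of Aronszajn (to apply Baumgartner) and of countable levels (to thin on values over $R$) are both essential inputs.
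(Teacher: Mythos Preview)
Your argument is correct and is essentially identical to the paper's proof: both thin to a $\Delta$-system with agreement on the root, invoke Baumgartner's lemma on the tips $\dom(p_\alpha)\setminus R$, and verify that $p_\alpha\cup p_\beta$ satisfies $(\dagger)$ because comparable pairs must lie entirely in one of the two original domains. Your write-up is slightly more explicit about why the thinning on values over $R$ succeeds and about the case analysis for $(\dagger)$, but there is no substantive difference.
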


\begin{proof}
Let $\seq{p_\alpha:\alpha<\w_1}\subset\D_T$. Letting $d_\alpha=\dom(p_\alpha)$ and thinning out if necessary, we may assume without loss of generality that $\{d_\alpha:\alpha<\w_1\}$ forms a $\Delta$-system with root $R$ and that $\alpha<\beta<\w_1$ implies $p_\alpha\restr R=p_\beta\restr R$. Apply Lemma \ref{Baumgartner} to find $\alpha<\beta<\w_1$ such that $x\perp y$ for all $x\in d_\alpha\setminus R$ and $y\in d_\beta\setminus R$. Let $r=p_\alpha \cup p_\beta$. Then $r$ is a condition, because the only way two points in its domain are comparable is if they both belong to either $d_\alpha$ or $d_\beta$, hence $(\dagger)$ holds. Since $r\le p_\alpha,p_\beta$, we are done.
\end{proof}

\begin{lem}\label{dandelion}
Let $T$ be an infinitely splitting Aronszajn tree and $\dot S$ a $\D_T$ name for the set $\bigcup_{p\in\dot G}\dom(p)$, where $\dot G$ is a $\D_T$-name for the generic filter.
\begin{enumerate}[(i),font=\normalfont]
\item For every $p\in\D_T$, every $y\in\dom(p)$ and every $x\in T$, if $x<y$, then $p\Vdash x\in\dot S$.
\item $\Vdash \dot S$ is downwards closed. 
\item For all $p\in\D_T$ and all $x\in\dom(p)$, $p\Vdash I_{\dot S}(x)\subset p(x)$. 
\item For all $p\in\D_T$ and all $x\in\dom(p)$, $p\Vdash p(x)\subset\dot S$.
\item For all $\alpha<\w_1$, the set $\{p\in\D_T:\exists x\in\dom(p)(\h(x)\ge\alpha)\}$ is dense in $\D_T$.
\end{enumerate}
\end{lem}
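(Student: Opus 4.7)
The strategy throughout is to verify each forcing statement via a density argument in $\D_T$, with the property $(\dagger)$ as the central structural tool.

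For (i), I would show that the set $D_x := \{q \le p : x \in \dom(q)\}$ is dense below $p$. Given any $p' \le p$ with $x \notin \dom(p')$, I'd extend it to $p''$ by adjoining $(x, F)$ where $F := \{y' \restr (\h(x)+1) : y' \in \dom(p'),\, x < y'\}$. This set is nonempty because $y \in \dom(p) \subseteq \dom(p')$ with $x < y$. Verification of $(\dagger)$ for $p''$ is routine: pairs of the form $(x, y')$ hold by construction, while for $w \in \dom(p')$ with $w < x$, the chain $w < x < y$ combined with $(\dagger)$ for $p'$ gives $x \restr (\h(w)+1) = y \restr (\h(w)+1) \in p'(w)$. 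Part (ii) follows immediately from (i) by unpacking the definition of $\dot S$.

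For (iii), I'd derive a direct contradiction from $(\dagger)$: if $z \in I(x) \setminus p(x)$, then any $r \le p$ with $z \in \dom(r)$ would have $x, z \in \dom(r)$ and $x < z$, forcing $z = z \restr (\h(x)+1) \in r(x) = p(x)$, a contradiction. Hence no extension of $p$ can place $z$ into $\dot S$, so $p \Vdash z \notin \dot S$ for each such $z$. For (iv), given $z \in p(x)$ I'd replay the density argument from (i): extend any $p' \le p$ by adjoining $(z, F)$ with $F$ taken as $\{y' \restr (\h(z)+1) : y' \in \dom(p'),\, z < y'\}$ together with one fixed element of $I(z)$ to guarantee nonemptiness. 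The $(\dagger)$-verification for $w \in \dom(p')$ with $w < z$ splits naturally by whether $w = x$ (use $z \in p(x) = p'(x)$) or $w < x$ (apply $(\dagger)$ for $p'$ to the pair $w < x$ and observe $z \restr (\h(w)+1) = x \restr (\h(w)+1)$).

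For (v), given $p$ and $\alpha < \w_1$, my plan is to adjoin a single new node $x$ of height $\ge \alpha$ to $\dom(p)$. If $\dom(p) = \emptyset$, any $x \in T_\alpha$ works with value $\{u\}$ for some $u \in I(x)$. Otherwise, letting $y_0 \in \dom(p)$ be of maximum height and picking $z_0 \in p(y_0)$, I'd find $x \in \uarrow z_0$ of height $\ge \alpha$ and set $p' := p \cup \{(x, \{u\})\}$ for some $u \in I(x)$. The pair $(y_0, x)$ satisfies $(\dagger)$ since $x \restr (\h(y_0)+1) = z_0 \in p(y_0)$, and pairs $(w, x)$ with $w < y_0$ reduce to $(\dagger)$ for $p$ applied to $w < y_0$; no pairs with $x$ as the lower element arise because $y_0$ was of maximum height. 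The main obstacle here is precisely the existence of a sufficiently tall $x$ in $\uarrow z_0$: this relies on $T$ being pruned in the sense that every node has descendants at every level below $\w_1$, which is a standard assumption that can be arranged by passing to the subtree of extendible nodes.
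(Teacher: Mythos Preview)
Your proposal is correct and follows essentially the same approach as the paper: all five parts are handled by the same density arguments, with the same extensions of conditions and the same case analyses in the verification of $(\dagger)$. Two minor differences are worth noting. In (iv), you fold the paper's two cases (according to whether $\{y'\in\dom(p'):z<y'\}$ is empty) into one step by always throwing an extra element of $I(z)$ into $F$; this is a harmless simplification. In (v), you correctly flag that the argument needs every node to have extensions of arbitrary height --- the paper simply writes ``Since $T$ is normal'' at that point, even though normality is not among the stated hypotheses, so your remark about passing to the pruned subtree is actually more careful than the paper itself.
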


\begin{proof}Write $\D=\D_T$ to simplify the notation.
\begin{enumerate}[(i),font=\normalfont]
\item It suffices to show that the set $\{r\in \D: x\in\dom(r)\}$ is dense below $p$. Fix $q\le p$ with $x\not\in\dom(q)$. Let $E=\{t\in\dom(q):x<t\}$. Since $y\in E$, $E\neq\emptyset$. Define a function $r$ with domain $\dom(q)\cup\{x\}$ by $r\restr\dom(q)=q$ and $r(x)=\{t\restr(\h(x)+1):t\in E\}$. Since $E\neq\emptyset$, $r(x)\neq\emptyset$. Obviously, $q\subset r$. We check that $r$ is a condition. Fix $u,v\in \dom(r)$ with $u<v$.
\begin{itemize}
\item If $u,v\in\dom(q)$, then $v\restr (\h(u)+1)\in q(u)=r(u)$.
\item If $u=x$, then $v\in E$, so $v\restr (\h(u)+1)\in r(x)$ by definition of $r(x)$.
\item If $v=x$, then $u<y$ by $x<y$, therefore
\begin{align*}
v\restr(\h(u)+1)&=x\restr(\h(u)+1)\\
&= y\restr(\h(u+1))\in q(u)
=r(u).
\end{align*}
\end{itemize}
So, in every case, $v\restr(\h(u)+1)\in r(u)$, and therefore $r$ is a condition.
\item Let $G$ is ${\mathbb{P}}$-generic over $V$ and $y\in S=\dot{S}_G$. Suppose $x<y$. Fix $p\in G$ with $y\in\dom(p)$. By (i), we can find $q\in G$ with $x\in\dom(p)$. Then $x\in S$.
\item Let $G\ni p$ be ${\mathbb{P}}$-generic over $V$ and suppose $y\in I_{S}(x)$, where $S=\dot{S}_G$. Since $y\in S$, there exists $q\in G$ with $y\in\dom(q)$. Choose $r\in G$ with $r\le p,q$. Then $x,y\in \dom(r)$ and $x<y$, so
\[
y=y\restr(\h(x)+1)\in r(x)=p(x)
\]
because $r$ is a condition.
\item Fix $y\in p(x)$. As in (i), it suffices to argue that $\{r\in\D:y\in\dom(r)\}$ is dense below $p$. Let $q\le p$ with $y\not\in\dom(q)$ and let $E=\{t\in\dom(q):y<t\}$.

\underline{Case 1:} $E\neq\emptyset$. This is as in (i): define $r$ by $r\restr\dom(q)=q$ and $r(y)=\{t\restr (\h(y)+1):t\in E\}$. The verification that $r$ is a condition is exactly the same as in the proof of (i).

\underline{Case 2:} $E=\emptyset$. Let $a$ be any finite non-empty subset of $I_T(y)$ and let $r=q\cup\{(y,a)\}$. It is enough to show that $r$ is a condition. Fix $u,v\in\dom(r)$ with $u<v$.

\begin{itemize}
\item If $u,v\in\dom(q)$, then $v\restr(\h(u)+1)\in q(u)=r(u)$.
\item If $u=y$, then $v\in\dom(q)$, so $v\in E$, which contradicts $E=\emptyset$.
\item If $v=y$, then $u\le x$ because $y\in I_T(x)$. We now distinguish two cases. \underline{If $u=x$}, then
\begin{align*}
v\restr (\h(u)+1)&= y\restr(\h(u)+1)\\
&=y\in p(x)= q(x)=r(x)=r(u).
\end{align*}
\underline{If $u<x$}, then $u\in\dom(q)$ and $x\in \dom(p)\subset\dom(q)$, so
\begin{align*}
v\restr(\h(u)+1) &= y\restr (\h(u)+1) = x\restr (\h(u)+1)\in q(u)=r(u).
\end{align*}
\end{itemize}
In every case, $v\restr (\h(u)+1)\in r(u)$, and so $r$ is a condition.
\item Fix $p\in\D$. We may assume that $\dom(p)\subset T\restr\alpha$. Since $\emptyset\not\in\ran(p)$, we can define $\gamma=\max\{\h(y):y\in\bigcup \ran(p)\}$. Note that $\gamma=\beta+1$ for some $\beta$. Fix $y\in\bigcup\ran(p)$ with $\h(y)=\beta+1$. By maximality, $y\not\in\dom(p)$.

\underline{Case 1:} $\beta+1=\alpha$. By the proof of (v), there exists $q\le p$ with $y\in\dom(p)$, and we are done.

\underline{Case 2:} $\beta+1<\alpha$ Since $T$ is normal, there exists $z\in T_\alpha$ with $y<z$. Let $a$ be any finite non-empty subset of $I(z)$ and let $q=p\cup\{(z,a)\}$. We check that $q$ is a condition.
\begin{itemize}
\item If $u,v\in\dom(p)$, then it is easy.
\item If $u=z$, then $v\in\dom(p)$. But $v>u$, so $\h(v)>\alpha>\gamma$, contradiction.
\item If $v=z$, then $\h(u)\le \h(x)$, where $x$ is the immediate predecessor of $y$. Since $T$ is a tree, $u\le x$. If $u=x$, then by $z>y$ we infer that
\[
v\restr(\h(x)+1)=y\in p(x)=q(x).
\]
If $u<x$, then 
\[
v\restr (\h(u)+1)=x\restr(\h(u)+1)=p(u)=r(u)
\]
\end{itemize}
In every case, $v\restr (\h(u)+1)\in r(u)$, and so $r$ is a condition.\qedhere
\end{enumerate}
\end{proof}

\begin{thm}\label{addfbs}
Let $T$ be an infinitely splitting Aronszajn tree and $\dot S$ a $\D_T$ name for the set $\bigcup_{p\in\dot G}\dom(p)$, where $\dot G$ is a $\D_T$-name for the generic filter. Then $\Vdash_{\D_T}``\dot S$ is an uncountable finitely splitting subtree of $T$". 
\end{thm}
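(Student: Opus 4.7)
The proof should be a straightforward assembly of the pieces established in Lemma \ref{dandelion} and Lemma \ref{ccc}. Let me describe the three things that need to be checked and how each follows.

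First, that $\dot S$ names a subtree of $T$ (i.e., a downwards closed subset of $T$) is literally Lemma \ref{dandelion}(ii), so nothing more is needed there.

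Second, for finite splitting: let $G$ be $\D_T$-generic over $V$, set $S=\dot S_G$, and fix any $x\in S$. By definition of $\dot S$, there is $p\in G$ with $x\in\dom(p)$. Lemma \ref{dandelion}(iii) applied to this $p$ gives $I_S(x)\subset p(x)$, and $p(x)$ is finite by the definition of $\D_T$. So $S$ is finitely splitting in $T$.

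Third, for uncountability, the plan is to show $S$ has height $\aleph_1$ in the extension, which forces it to be uncountable. By Lemma \ref{ccc} the poset $\D_T$ is ccc, hence preserves cardinals and in particular $\aleph_1$. Lemma \ref{dandelion}(v) says that for every ordinal $\alpha<\w_1^V=\w_1^{V[G]}$ the set $E_\alpha:=\{p\in\D_T:\exists x\in\dom(p)\,(\h(x)\ge\alpha)\}$ is dense. By genericity $G\cap E_\alpha\neq\emptyset$ for every such $\alpha$, and then Lemma \ref{dandelion}(i) (applied to any $y\in\dom(p)$ of height $\ge\alpha$ for $p\in G\cap E_\alpha$) yields nodes of $S$ of arbitrarily large countable height. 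Hence $\h(S)=\aleph_1$ in $V[G]$, so $S$ is uncountable.

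There is no real obstacle here — everything has been encapsulated in the previous lemma. The only minor point to be mindful of is the cardinal-preservation step: one must invoke the ccc of $\D_T$ so that the $\aleph_1$ of the extension agrees with $\aleph_1$ of the ground model, and so ``uncountable height in $V[G]$'' has the intended meaning. With that observation the three bullets combine into a single paragraph proof.
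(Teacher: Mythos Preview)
Your proposal is correct and follows essentially the same route as the paper, which simply cites Lemma~\ref{ccc} for cardinal preservation together with parts (ii), (iii) and (v) of Lemma~\ref{dandelion}. The only superfluous step is your appeal to Lemma~\ref{dandelion}(i) in the uncountability argument: if $p\in G\cap E_\alpha$ and $y\in\dom(p)$ has height $\ge\alpha$, then $y\in S$ directly from the definition $S=\bigcup_{p\in G}\dom(p)$, so part~(i) is not needed there.
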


\begin{proof}
By Lemma \ref{ccc}, $\D_T$ preserves cardinals. By parts (ii),(iii) and (v) of Lemma \ref{dandelion}.
\end{proof}

Since the forcing $\D_T$ is only ccc and not obviously Knaster, it is unclear whether $T$ remains Aronszajn in the $\D_T$-extension. To deal with this issue, we first do some preliminary forcing.

\begin{defi}[Baumgartner]\label{sp}
If $T$ is a tree, then ${\mathbb{S}}_T$ is the poset of finite order preserving partial functions from $T$ into $\Q$ (the set of rational numbers), ordered by inclusion.
\end{defi}

\begin{thm}[Baumgartner]\label{spccc}
Let $T$ be an Aronszajn tree. Then
\begin{enumerate}
\item ${\mathbb{S}}_T$ has the ccc.
\item $\Vdash_{{\mathbb{S}}_T} T$ is special.
\end{enumerate}
\end{thm}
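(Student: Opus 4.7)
The plan is to prove (2) first by a standard density argument, and then tackle (1) via a $\Delta$-system reduction combined with Baumgartner's Lemma \ref{Baumgartner}.

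For (2), I would show that for each $x \in T$ the set $D_x := \{p \in {\mathbb{S}}_T : x \in \dom(p)\}$ is dense. Given $p \in {\mathbb{S}}_T$ with $x \notin \dom(p)$, let $L := \{p(y) : y \in \dom(p),\, y < x\}$ and $U := \{p(y) : y \in \dom(p),\, y > x\}$. Since $p$ is order preserving, every element of $L$ lies below every element of $U$, so by the density of $\Q$ I can pick $q \in \Q$ strictly between $\max L$ and $\min U$ (with the usual convention when one of these sets is empty) and extend $p$ by setting $p(x) = q$. The resulting generic map $f := \bigcup G : T \to \Q$ is then total and order preserving, and each fiber $f^{-1}(q)$ is an antichain, so $T = \bigcup_{q \in \Q} f^{-1}(q)$ witnesses that $T$ is special.

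For (1), suppose we are given an uncountable family $\{p_\alpha : \alpha < \w_1\} \subset {\mathbb{S}}_T$. First, I would apply the standard $\Delta$-system lemma to $\{\dom(p_\alpha) : \alpha < \w_1\}$ to obtain an uncountable subfamily forming a $\Delta$-system with root $R$. Since $R$ is finite and $\Q$ is countable, there are only countably many functions $R \to \Q$, so by a further pigeonhole I may assume $p_\alpha \restr R$ is the same for all $\alpha$. Now the sets $\dom(p_\alpha) \setminus R$ are pairwise disjoint finite subsets of $T$, and since $T$ has no uncountable branches, Lemma \ref{Baumgartner} yields $\alpha < \beta < \w_1$ such that every $x \in \dom(p_\alpha) \setminus R$ is incomparable with every $y \in \dom(p_\beta) \setminus R$.

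I would then claim $r := p_\alpha \cup p_\beta$ is a common extension. It is a well-defined function because the domains intersect exactly in $R$ and $p_\alpha, p_\beta$ agree there. It is order preserving: given comparable $u < v$ in $\dom(r)$, if both are in $\dom(p_\alpha)$ or both in $\dom(p_\beta)$ the conclusion holds by hypothesis; the only remaining case would have one in $\dom(p_\alpha) \setminus R$ and the other in $\dom(p_\beta) \setminus R$, but those points are incomparable by construction---contradiction. So $r \le p_\alpha, p_\beta$, and ${\mathbb{S}}_T$ is ccc. The only delicate point is the secondary thinning that forces $p_\alpha \restr R = p_\beta \restr R$; once this is in place, Lemma \ref{Baumgartner} carries the rest of the argument.
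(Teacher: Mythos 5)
Your proof is correct and follows essentially the same route as the paper's cited source (Kunen, Lemma III.5.19), which the paper defers to rather than reproducing: a density argument yielding the generic specializing function for (2), and a $\Delta$-system refinement combined with Lemma \ref{Baumgartner} for (1). (The only glossed-over triviality is the case where $\dom(p_\alpha)\setminus R=\emptyset$ for some $\alpha$, in which case $p_\alpha$ is already compatible with any $p_\beta$ agreeing with it on $R$.)
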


For a proof, see \cite[Lemma III.5.19]{kunenSetTheory2011}.

\begin{thm}\label{destroyL}
Let $T$ be an Aronszajn tree. Then $\D_T$ adds no cofinal branches to $T$. Therefore, $\D_T$ forces that $T$ is a non-Lindel\"of Aronszajn tree.
\end{thm}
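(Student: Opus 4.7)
My plan is to prove that $\D_T$ adds no cofinal branch through $T$ by an indirect argument, using the auxiliary forcing $\mathbb{S}_T$ from Definition \ref{sp} to specialize $T$ in a preliminary extension. The key observation, flagged in the paragraph preceding the theorem, is that a special tree cannot acquire a cofinal branch in any outer model whatsoever: if $\phi\colon T\to \Q$ is order preserving and $b$ were such a branch, then $\phi\restr b$ would be a strictly increasing map $\w_1\to\Q$, which is absurd.

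Suppose for contradiction that some $p\in \D_T$ and some $\D_T$-name $\dot b$ satisfy $p\Vdash_{\D_T}``\dot b$ is a cofinal branch through $T$''. First, force with $\mathbb{S}_T$ over $V$ to obtain $V[H]$. By Theorem \ref{spccc}, $\w_1$ is preserved and $T$ becomes special in $V[H]$, witnessed by some $\phi\in V[H]$. In particular $T$ remains Aronszajn in $V[H]$, so Lemma \ref{ccc} applied inside $V[H]$ yields that $\D_T$ (the same poset as in $V$, since its definition depends only on $T$) is still ccc in $V[H]$. Now force with $\D_T$ below $p$ over $V[H]$ to obtain $V[H][G]$. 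Since $V\subset V[H]$, the filter $G$ is also $\D_T$-generic over $V$, so the interpretation $\dot b_G$ is a cofinal branch of $T$ in $V[G]\subset V[H][G]$; but the specializing function $\phi\in V[H]\subset V[H][G]$ precludes such a branch, giving the desired contradiction.

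For the ``therefore'' clause, note that $\D_T$ is ccc by Lemma \ref{ccc}, so $\w_1$ is preserved and each countable level $T_\alpha$ remains countable in $V[G]$; thus $T$ is still an $\aleph_1$-tree in $V[G]$. Combined with the absence of cofinal branches just proved, $T$ is an Aronszajn tree in $V[G]$. Theorem \ref{addfbs} produces an uncountable finitely splitting subtree of $T$ in $V[G]$, and Theorem \ref{subtree} then yields that $T$ is not Lindel\"of in $V[G]$.

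The only subtlety is the transfer from $V$ to $V[H]$: we do not move the forcing relation across models, but only use that any $V[H]$-generic filter for $\D_T$ is also $V$-generic, so $\dot b_G$ becomes the cofinal branch it was forced in $V$ to be, inside $V[G]\subset V[H][G]$. A direct, Knaster-style combinatorial sealing of cofinal branches within $\D_T$ alone appears difficult, which is exactly the motivation for the specialization detour hinted at just before the statement of the theorem.
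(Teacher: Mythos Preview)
Your proof is correct and follows essentially the same approach as the paper: both use the specializing forcing $\mathbb{S}_T$ to arrange that $T$ is special in an intermediate model, then exploit that $\D_T$ remains ccc there so that specialness persists to the full extension, contradicting the putative cofinal branch. The only cosmetic difference is the order of forcing---the paper forces with $\D_T$ first and then $\mathbb{S}_T$, invoking the Product Lemma to swap to $V[H][G]$, whereas you force with $\mathbb{S}_T$ first and then $\D_T$ below $p$, using that a $V[H]$-generic is automatically $V$-generic; these are two sides of the same coin.
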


\begin{proof}
Let $G$ be $\D_T$-generic over $V$. Suppose towards a contradiction that $T$ is not an Aronszajn tree in $V[G]$. Note that the definitions of $\mathbb{S}_T$ and $\D_T$ are both absolute, so the two posets are the same whether computed in $V$ or in any outer model. Let $H$ be $\mathbb{S}_T$-generic over $V[G]$. By the Product Lemma, $V[G][H]=V[H][G]$ and $G$ is $\D_T$-generic over $V[H]$. In $V$, $T$ has no uncountable branches, so $\mathbb{S}_T$ is ccc, and therefore $T$ is special in $V[H]$. In particular, $T$ is an Aronszajn tree in $V[H]$, hence $\mathbb{D}_T$ is ccc in $V[H]$. But if $T$ is special then it remains special in any $\aleph_1$-preserving extension, in particular $T$ is special in $V[H][G]$. This contradicts the assumption that there is a cofinal branch through $T$ in $V[G]$.

The second assertion in the theorem statement is an immediate consequence of the first and Theorem \ref{addfbs}.
\end{proof}

%
%

\begin{cor}\label{MA}
Suppose that ${\sf MA}_{\aleph_1}$ holds. Then there are no Lindelöf trees.
\end{cor}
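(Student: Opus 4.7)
My strategy is to use $\mathsf{MA}_{\aleph_1}$ to absorb the ccc forcing $\D_T$ of Section 5, producing in the ground model an uncountable finitely splitting subtree of an arbitrary Aronszajn tree $T$, and then to invoke Theorem \ref{subtree} to conclude that $T$ is not Lindelöf.

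Let $T$ be an Aronszajn tree. First I would reduce to the case in which $T$ is infinitely splitting. This can be done by enlarging $T$ to an infinitely splitting Aronszajn tree $T' \supseteq T$ (for example by grafting above each node a countable, infinitely splitting tree all of whose branches are finite): any uncountable finitely splitting subtree $S$ of $T'$ meets $T$ in a subtree which is still finitely splitting, and $S \cap T$ remains uncountable because each $x \in S \cap T$ contributes only finitely many nodes to $S \setminus T$.

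With $T$ infinitely splitting, Lemma \ref{ccc} gives that $\D_T$ is ccc, and Lemma \ref{dandelion}(v) exhibits, for each $\alpha < \w_1$, a dense set $E_\alpha = \{p \in \D_T : (\exists x \in \dom(p))\, \h(x) \geq \alpha\}$. Applying $\mathsf{MA}_{\aleph_1}$ to the family $\{E_\alpha : \alpha < \w_1\}$ yields a filter $G \subset \D_T$ meeting every $E_\alpha$. Setting $S = \bigcup_{p \in G} \dom(p)$, parts (ii), (iii) and (iv) of Lemma \ref{dandelion} show that $S$ is a downwards closed subset of $T$ with $I_S(x) \subset p(x)$ whenever $p \in G$ and $x \in \dom(p)$; in particular $S$ is finitely splitting at every one of its points. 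Because $G$ meets every $E_\alpha$, the set $S$ contains points of arbitrarily large height below $\w_1$, hence $|S| = \aleph_1$, and Theorem \ref{subtree} contradicts the Lindelöf-ness of $T$.

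The main obstacle I foresee is the initial reduction to the infinitely splitting case, since $\D_T$ is defined only in that setting; the remainder is a routine application of $\mathsf{MA}_{\aleph_1}$ to the ccc-ness and density results already established in Section 5.
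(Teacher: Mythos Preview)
Your strategy---apply $\mathsf{MA}_{\aleph_1}$ to the ccc poset $\D_T$ and meet the dense sets $E_\alpha$ of Lemma~\ref{dandelion}(v) to extract an uncountable finitely splitting subtree---is exactly the paper's; its proof is the single sentence ``we only need to meet $\aleph_1$-many dense sets of $\D_T$''.

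Two small points are worth flagging. First, your grafting construction does not work as written: a tree in which \emph{every} node has infinitely many immediate successors in particular has a successor at every node, so one can recursively build an infinite branch; an ``infinitely splitting tree all of whose branches are finite'' therefore does not exist. Consequently your $T'$ cannot be made infinitely splitting by attaching such pieces, and your argument that $S\cap T$ is uncountable (which relied on each grafted piece contributing only finitely many nodes to $S$) breaks down. The paper simply sidesteps the issue (implicitly reading ``Lindel\"of tree'' as infinitely splitting); if you want an honest reduction, a construction in the spirit of Lemma~\ref{finintoinf} will embed a given Aronszajn tree into an infinitely splitting one. Second, Lemma~\ref{dandelion}(ii) is a statement about the \emph{generic} filter; for an $\mathsf{MA}$-filter $G$ meeting only the $E_\alpha$, the set $S=\bigcup_{p\in G}\dom(p)$ need not be downward closed, and the downward closure $\darrow S$ need not be finitely splitting at points of $(\darrow S)\setminus S$. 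The fix is routine---throw in $\aleph_1$ further dense sets coming from the proof of Lemma~\ref{dandelion}(i)---and the paper's one-line argument elides the same detail.
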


\begin{proof}
Given an Aronszajn tree $T$, we only need to meet $\aleph_1$-many dense sets of $\D_T$ to obtain an uncountable finitely splitting subtree of $T$, namely those in Lemma \ref{dandelion}(vi).
\end{proof}


%

%


\section*{Open questions}

\begin{enumerate}
\item Suppose $\lambda<\mu$ are infinite cardinals. Is it consistent, modulo large cardinals, that there exists a cardinal $\kappa\le \lambda$ which is $\lambda$-square compact but not $\mu$-square compact?

\item For which pairs of infinite cardinals $\kappa,\lambda$ with $\kappa<\lambda$ can one find a Hausdorff $\kappa$-compact space of weight $\lambda$?

\item Does $\sf ZFC$ prove the existence of a non-Lindelöf special tree?

\item Is the topological square of a Lindelöf tree Lindelöf?
\end{enumerate}

\printbibliography

\end{document}